\newcommand{\beql}[1]{\begin{equation}\label{#1}}
\newcommand{\eeql}{\end{equation}}
\newcommand{\eqn}[1]{(\ref{#1})}
\numberwithin{equation}{section}
\newtheorem{theorem}{Theorem}[section]
\newtheorem{lemma}[theorem]{Lemma}
\newtheorem{proposition}[theorem]{Proposition}
\newtheorem{conj}[theorem]{Conjecture}   
\theoremstyle{remark}
\newtheorem{definition}[theorem]{Definition}
\newtheorem{rem}[theorem]{Remark}
\renewcommand{\le}{\leqslant} 
\renewcommand{\ge}{\geqslant} 
\renewcommand{\leq}{\leqslant} 
\renewcommand{\geq}{\geqslant}
\DeclareMathOperator{\E}{\mathds{E}}
\DeclareMathOperator{\pr}{\mathds{P}}
\begin{document}

\title{Data Flow Dissemination in a Network}



\author
{
Aditya Gopalan \\
University of Illinois \\ 
at Urbana-Champaign\\
\texttt{gopalan6@illinois.edu}
\and
Alexander L. Stolyar \\
University of Illinois \\ 
at Urbana-Champaign\\
\texttt{stolyar@illinois.edu}
}

\date{\today}

	\maketitle
	
	\abstract{
	We consider the following network model motivated, in particular, by blockchains and peer-to-peer live streaming. 
	Data packet flows arrive at the network nodes and need to be disseminated to all other nodes. 
	Packets are relayed through the network via links of finite capacity. A packet leaves the network when it is disseminated to all nodes. 
	Our focus is on two communication disciplines, which determine the order in which packets are transmitted over each link, namely {\em Random-Useful} (RU) and {\em Oldest-Useful} (OU).
	We show that RU has the maximum stability region 
	in a general network. For the OU we demonstrate that, somewhat surprisingly, it does {\em not} in general have the maximum stability region. 
	We prove that OU does achieve maximum stability in the important special case of a symmetric network, given by the full graph with equal capacities on all links and equal arrival rates at all nodes. We also give other stability results, and compare different disciplines' performances in a symmetric system via simulation. 
	
	Finally, we study the cumulative delays experienced by a packet as it propagates through the symmetric system, specifically the delay asymptotic behavior as $N \to \infty$.
We put forward some conjectures about this behavior, supported by heuristic arguments and 
simulation experiments. 
	}

{\em Key words and phrases:} Data flow dissemination; broadcast; peer-to-peer networks; blockchain; packet propagation delay; age of information; stochastic stability; queueing networks; service discipline; random useful; oldest useful

{\em AMS 2000 Subject Classification:} 
90B15, 60K25

\section{Introduction}
Pairwise information exchange forms the basis of content dissemination in many network applications.
The model considered in this paper is a network (graph) $H$ with finite number $N$ of nodes connected by directed links. Each node creates new packets at the epochs of an independent point process.
We treat packet creation process at each node as an exogenous packet arrival process with finite rate.
The nodes interact pairwise via the network links. For each link there is an independent point process, determining the epochs at which the interactions may occur; the rate of this process is the link capacity. 
During each interaction on a link, a single packet can be transmitted on the link from one node to the other.
Each packet needs to be disseminated to all other nodes; when this occurs, the packet leaves the network.
(In this paper we will use terms packet `dissemination', `propagation', and `replication' interchangeably.)
Under appropriate assumptions, this model is described as a Markov process.

Key questions about the performance of such model include stability (understood as positive recurrence of the corresponding Markov process) and packet delays. In this paper we study the stability and performance of the network
under several communication disciplines, which define the order in which packets are transmitted. 
For a given link $(u,v)$, a packet already available (present) at node $u$, but not yet available at node $v$ is
called {\em useful} (on this link). We consider the following three communication disciplines: $(1)$ the \emph{Random-Useful}, $(2)$ the \emph{Oldest-Useful}, and $(3)$ the \emph{Selfish}. We now describe them.

Absent any application context, a natural discipline is the \emph{Random-Useful}, in which the sending node picks a useful packet uniformly at random.
This discipline was also studied in \cite{massoulie2008rate} for use in peer-to-peer live streaming.

Blockchains are an emerging application which are well-described by our system model; blocks in a blockchain system are analogous to packets in our model.
The \emph{Oldest-Useful} discipline, in which nodes transmit the oldest useful packet, plays a key role in the distributed information verification in blockchain systems (see, \textit{e.g.}, \cite{frolkova2019bitcoin}, \cite{gopalan2020stability}).
The Oldest-Useful discipline is also a natural candidate when one of the goals is to minimize the Age-of-Information (the age in the network of the oldest present packet). Indeed, it is clearly related to much studied disciplines of the Largest-Delay-First (LDF) or Earliest-Deadline-First (EDF) type (see, e.g. \cite{stolyar2003control} for an overview). The disciplines of the latter type are known to be optimal  in terms of stability and Age-of-Information,
in a variety of settings. One of the findings of this paper is that those optimality properties of LDF/EDF disciplines do not always extend to data dissemination networks -- in particular, Oldest-Useful does not necessarily have the maximum stability region for our model. 

The \emph{Selfish} discipline is such that each node prioritizes useful packets that it has created, and sends them in the oldest-first order; other useful packets are sent only when ``own'' packets are not available. 
This discipline is appropriate for systems in which limited or no information is available about not ``own'' packets.

In this paper we show that the Random-Useful discipline achieves the maximum stability region on an arbitrary network $H$.
The main stability results of the paper concern the Oldest-Useful discipline. We show that, in general, it does {\em not} achieve the maximum stability region.
However, for the important special case when the system is symmetric (graph $H$ is complete, all link capacities are equal, and arrival rates to all nodes are equal)
we prove that Oldest-Useful {\em does} provide maximum stability. (This important special case is a good model for, \textit{e.g.}, overlay networks.)
The Selfish discipline is also maximally stable for a symmetric system. 
We also show that Oldest-Useful is maximally stable in networks where there is at most one directed path from any source node to any other node.

Stability properties of a network determine the system throughput. For applications such as peer-to-peer live streaming and peer-to-peer gaming, not only the throughput, but also packet delays matter.
In particular, it is desirable that the expected sojourn time of packets does not differ ``too much'' from those in a system where each packet propagates ``freely,'' with no other packets present in the network.
We refer to the latter system as the \emph{free system}.
To this end, we conjecture (and observe in simulations) that in a symmetric system, as $N\to\infty$, the expected steady-state sojourn time of a packet under the Oldest-Useful and Random-Useful discipliness, normalized by the expected sojourn time of a packet in the corresponding free  system,
converges to a finite constant (depending on the discipline), which we call the {\em limiting slowdown}.

In addition to sojourn times, for a symmetric system, we study the delays 
until a packet is available at $k\in \{1,\ldots,N\}$ out of $N$ nodes. For that we introduce a convenient transformation 
of the system, in which, for each $N$, availability levels $k\in \{1,\ldots,N\}$ are mapped (non-linearly) into points of continuous $[0,1]$ segment; we also rescale time, so that the expected sojourn time in the free system is $1$. 
In the transformed system, packets arrive at point $0$, then move ``right,'' and depart upon reaching point $1$.
The dependence of the expected steady-state time for a packet to reach point $x \in [0,1]$ in the transformed 
system on $x$, we call {\em mean normalized delay profile}. In particular, in the transformed free system, when $N$ is large, a packet essentially moves at constant speed $1$ from point $0$ to $1$; consequently,
the free system mean normalized delay profile converges to the identity function $x$, as $N\to\infty$.
We conjecture, and observe in simulation, that for the Oldest-Useful and Random-Useful disciplines, as $N\to\infty$,
the mean normalized delay profile also converges to a fixed continuous function (depending on the discipline).

To provide some analytic insight into these conjectures about packet delays, we introduce a version of the Random-Useful discipline, 
called the \emph{Reshuffling}, under which a packet with availability $k$, independently of all other packets
is ``reshuffled'' just before any communication epoch in the network -- namely,
it is placed at a set of $k$ nodes chosen uniformly at random.
For a symmetric system under Reshuffling, 
we provide heuristic arguments allowing us to conjecture the explicit linear form of the 
limiting mean normalized delay profile and limiting slowdown. These conjectures for the reshuffling system
are well confirmed by simulation experiments.

\subsection{Related Work}

Several recent works have focused on the propagation of blocks in a blockchain network.
The well-studied model in \cite{pass2017analysis} assumes that every block propagates to all network nodes within a finite deterministic time interval after its arrival.
Papers ~\cite{li2018blockchain, li2019markov} treat block generation as a queueing process but do not consider block propagation in a blockchain network.
Work \cite{sankagiri2021longest} studies only the point-to-point propagation of blocks.
Each of these models ignore the intricacies of pairwise information exchange and instead opt for simpler dynamics with obvious stability regions.
In this paper, we address the nontrivial challenge of taking into account the pairwise information exchange.

The following three papers all study various formulations and restrictions of the pairwise communication model in this paper.

Paper \cite{ioannidis2009optimal} studies a model with a single source of exogenously arriving packets, where network nodes communicate only the newest available packet, whenever such a packet is useful.
While this model is based on pairwise information exchange, the restricted context is such that the system is automatically stable for all positive arrival rates.
We focus on the more general setting where stability is not \textit{a priori} guaranteed.

In \cite{massoulie2008rate} authors study the Random-Useful discipline with a single source of exogenous arrivals, and an arbitrary network topology.
They show that in this setting, the Random-Useful discipline achieves the maximal stability region.
We show that this result extends to general case, with arrivals to more than one node.

Paper \cite{gopalan2020stability} studies the Oldest-Useful discipline.
It obtains lower and upper bounds on the stability region of this discipline for arbitrary network topologies.
However, these bounds are not of the same order: the lower bound is $O(1/\log N)$ of the upper bound, where $N$ is the number of nodes.
In the present paper we use different methods to prove that the Oldest-Useful discipline attains the maximum stability region for symmetric systems
(and also show that, in general, Oldest-Useful is {\em not} maximally stable).

In a setting different from ours, all packets to be disseminated exist at the beginning of the process.
Then there exists a time $T$ at which all packets have 
left the network. The main problem of interest here is to compute moments of $T$ in terms of the network topology.
This setting, also called rumor spreading, has been used to model information propagation in peer-to-peer systems such as Bit-Torrent (see \cite{shah2009gossip} and the references therein).

While our focus is on systems where packets are created exogenously, for completeness we refer the reader to a line of work where pairwise interactions are used to share a fixed number of packets among network nodes that arrive to the system exogenously and leave the system upon collecting a copy of each packet; see, \textit{e.g.},~ \cite{massoulie2005coupon,hajek2011missing}.

\subsection{Contributions of This Paper}

\begin{enumerate}
	\item We extend the result of \cite{massoulie2008rate} to the case where packets arrive at an arbitrary number of nodes (as opposed to just one); namely, we show that Random-Useful is maximally stable in this case. 
	\item Despite it being a natural discipline in various application contexts, we show that the Oldest-Useful discipline in general does not achieve the maximal stability region.
	\item For the special case when the network is symmetric (graph $H$ is complete, all links have equal capacity, and all nodes have equal exogenous arrival rates of new packets), we show that the Oldest-Useful discipline does achieve the maximal stability region.
		This is an important special case as it captures the communication behavior of many overlay networks.
	\item We show that the Oldest-Useful discipline is maximally stable in a network where there is a unique directed path from any source node to any destination node.
	We also show that in a somewhat more special case when the network has a ``tree structure,'' any work-conserving discipline is maximally stable.
	\item We compare the Random-Useful, Oldest-Useful, and Selfish disciplines numerically for symmetric systems.
		We observe that, as desired, the Age-of-Information is smaller under the Oldest-Useful than under the Random-Useful. Somewhat surprisingly, the Selfish discipline achieves both the least Age-of-Information as well as the least packet sojourn time 
		in the system, among the three disciplines.
	\item We make some conjectures about the packet propagation delays in a symmetric system as compared to the free system, which are well-supported by simulation evidence.
		In particular, we conjecture that, as $N\to\infty$, the expected steady-state sojourn time of a packet under the Oldest-Useful and Random-Useful, normalized by the expected sojourn time of a packet in the free  system, converges to a finite constant (depending on the discipline), which we call the {\em limiting slowdown}.
		Moreover, we make a stronger conjecture that for the Oldest-Useful and Random-Useful disciplines, as $N\to\infty$,
the mean normalized delay profile converges to a fixed continuous function (depending on the discipline);
to do that we introduce a novel (non-linear) transformation of the symmetric system.

	\item To gain further insight into the conjectures about packet delays, 
	we introduce a version of the Random-Useful discipline, 
called the \emph{Reshuffling}. For the symmetric system under Reshuffling, we provide heuristic arguments allowing us to conjecture the explicit linear form of the limiting mean normalized delay profile and limiting slowdown. These conjectures for the reshuffling system
are well confirmed by simulation experiments.
	
\end{enumerate}

\subsection{Outline of the Rest of the Paper}

In Section \ref{sec:model}, we introduce the formal system model and the communication disciplines of interest;
we also discuss the model assumptions and how they can be generalized.
Section \ref{sec:random} contains stability results for Random-Useful and Selfish disciplines. 
Section \ref{sec:oldest-first} presents instability and stability results for Oldest-Useful discipline.
Section \ref{sec:single-path} stability results for ``unique-path'' networks and networks having ``tree structure.''
In Section \ref{sec:comparisons}, we present simulation results comparing the performance of Random-Useful, Oldest-Useful, and Selfish disciplines
in symmetric systems.
In Section~\ref{sec:delays}, we state some conjectures regarding the limiting slowdown and limiting mean normalized delay profile in a symmetric network under the Oldest-Useful and Random-Useful disciplines, and provide simulation evidence for them; to formulate these conjectures, we define a free system, which serves as a benchmark.
In Section~\ref{sec:reshuffling}, we introduce the Reshuffling discipline and study it in a symmetric system; in particular, we heuristically derive
the limiting mean normalized delay profile for it, which well confirmed by simulations.
Section~\ref{sec-conclusions} contains brief conclusions.
Appendix~\ref{sec-ru-gen-proof} has the proof of the maximum stability of Random-Useful;
Appendix~\ref{sec-profile-dynamics} contains more technical parts of our analysis of Reshuffling discipline;
Appendix~\ref{sec-supporting-simulation} has some additional supporting simulation results.

\section{Model}
\label{sec:model}

\subsection{Model Definition}
\label{subsec:model}

We consider a network whose structure is given by a finite directed graph $H := (V, E)$, where $V$ and $E$ are the sets of nodes and links, respectively.
Each node $u$ receives an independent flow of exogenous packet arrivals (creations), as a point process of rate $\lambda_u \geq 0$. 
(For those packets node $u$ is the \emph{source}, or \emph{origin}.) 
For each link $(u, v) \in E$, there is a point process of rate $c_{uv}>0$; at the epochs of this process node $u$ can instantly communicate (or transmit) one packet 
to node $v$. The network goal is to disseminate (or propagate, or replicate) each arriving packet to all other nodes. A packet disseminated to all nodes leaves 
the network. 

A packet {\em age} is defined as the time elapsed from the packet exogenous arrival (to its source node) to the current time. One packet is ``older'' or ``younger'' than another if its age is larger or smaller, respectively.
A packet {\em sojourn time} is the time it spends in the network, from exogenous arrival until the departure.
The network current {\em Age-of-Information} (AoI) is the age of the oldest packet in the network.
Throughout the paper $N$ denotes the total number of nodes (the cardinality of $V$), and 
the term \emph{stage $i$ packet}, $i \in \{1,2,\ldots, N-1\}$, refers to a packet available at exactly $i$ nodes.

We now define the communication disciplines considered in this paper. 
We assume that, at any time, for any link $(u,v)$, node $u$ always knows which of the packets already available at $u$ are not yet available at $v$ -- 
we call such packets {\em useful} on link $(u,v)$. (Note that a useful packet on link $(u,v)$ is {\em not necessarily} a packet originating at $u$.)
A communication discipline is a rule specifying, for each link $(u,v)$, which currently useful (on this link) packet is transmitted at this link's communication epoch. (If there are no useful packets on link $(u,v)$ at its communication epoch, then, of course, any packet transmission on this link will not change the system state; thus, we can and will use the convention that no packet transmission occurs.)\\

{\noindent \textbf{Oldest-Useful -- }} Communicate the oldest useful packet (on the link). \\
{\noindent \textbf{Random-Useful --}} Communicate a useful packet (on the link) chosen uniformly at random.\\
{\noindent \textbf{Selfish --}} Communicate the oldest useful packet (on the link), among those originated at $u$. 
If no useful packet originated at $u$ is available, 
communicate another useful packet, chosen according to arbitrary well-defined rule (for example, the oldest, or the youngest, or chosen uniformly at random).\\

Assume now that all {\em packet arrival processes and all link communication processes are mutually independent Poisson.}
Under this assumption, for each of the three communication disciplines, the system evolution can be described by the following continuous-time irreducible Markov chain $X(t)$ with a countable state space.

At time $t$, suppose that $k$ packets are present in the system; we treat any packet which is replicated at all nodes as having departed the system.
We label the packets $1, \ldots, k$, in the order of exogenous arrival, with the oldest packet labeled $1$.
If, at time $t$, a new packet arrives, it is labeled $k+1$.
If, instead, the packet labeled $j$ departs the system at time $t$, we decrement the labels of the packets $j + 1, \ldots, k$ by $1$. For $(u, v) \in E$, we denote by $x_{uv}(t)$ the subset of (labeled) packets with source $u$ which at time $t$  are not available at node $v$ yet.
We define $X(t) = (x_{uv}(t))_{(u, v) \in E}$, which is clearly a countable irreducible continous-time Markov chain. 
(Obviously, $X(t)$ is not the only possible representation of the system evolution as a countable Markov chain.
Other -- equivalent -- Markov representations exist under specific disciplines and/or specific networks.
In fact, we will use equivalent alternative state representations, when it is convenient for analysis.)

\begin{definition}
	The system is called \emph{stable} if the corresponding Markov chain $X(t)$ is positive recurrent or, equivalently, 
	if a stationary distribution exists. (The stationary distribution is necessarily unique.)
	For a given $H$ and link capacities $\{c_{uv}\}$, the network \emph{stablility region} is the set of those 
	arrival rate vectors $\{\lambda_u\}$, for which the system is stable.
\end{definition}

The system stability implies, in particular, that in steady-state the AoI has a proper distribution
and that with positive frequency the network reaches ``empty'' state (with no present packets, i.e. when all arrived packets already left the network). Reaching the empty state
is sometimes referred to as network ``synchronization.''

The following condition is easily seen to be necessary for the network stability, {\em under any discipline} (not just the disciplines considered in this paper):
\beql{eq-cut-cond}
\sum_{u \in S}\lambda_u < \sum_{\substack{u \in S \\ v \not \in S}}c_{uv} ~~~ \mbox{for all $S \subset V$}.
\eeql
Unless this condition holds, there exists an overloaded or exactly critically loaded cut in the network, and the stability cannot hold.
Thus, any discipline under which condition \eqn{eq-cut-cond}
 is also sufficient for stability, achieves the maximum stability region.

\begin{definition}
\label{def-homogen}
	We call a system \emph{complete homogeneous} if $H$ is complete and $c_{uv} = c$ for all $(u, v) \in E$.
	For such networks, without loss of generality, we can and will assume $c = 1$. 
	(This condition can always be achieved by rescaling time.)
	A system will be called \emph{symmetric} if it is complete homogeneous and, in addition, $\lambda_u = \lambda$ for all $u \in V$.
\end{definition}

A complete homogeneous network is a good model, for example, when the pairwise communication occurs on an overlay network, such as in peer-to-peer protocols. Note that, for a symmetric network, the necessary stability condition \eqn{eq-cut-cond} is equivalent to $\lambda < 1$. Also, $\lambda$ is the {\em load} of the symmetric network, because packets arrive at the total rate $\lambda N$, each needs to be transmitted $N-1$ times, and the total capacity of all links in the network is $N(N-1)$; in particular, if the network is stable, then, by symmetry, the utilization of each link (the average rate
at which useful packets are transmitted) is exactly $\lambda$.

\begin{definition}
	We call a system \emph{unique-path} if the graph $H$ is such that it has at most one directed path (not including any node more than once) 
	from any node $u$ to any other node $v$. We will say that a system has {\em tree structure}, if it can be constructed as follows: we take an undirected tree on the set of nodes $V$, and then replace each undirected edge by the pair of directed links in both directions. (Clearly, a network with a tree structure is a special case of
	a unique-path network.)
\end{definition}

\subsection{Discussion of the Model Assumptions and Results' Generalizations}
\label{sec:generalizations}

\subsubsection{Markov Assumptions} 
\label{subsec-markov}

The Markov assumptions, namely that the exogenous arrivals and link communications are Poisson processes, are made to simplify the exposition. They make the system process a countable Markov chain, and also simplify some technical details. All our stability results, in Sections~\ref{sec:random}-\ref{sec:single-path}, easily generalize to far more general arrival and communication processes -- for example renewal --
as long as the system process is Markov, possibly with more complicated, non-countable state space.

\subsubsection{Assumption of Instant Communication over a Link}
\label{subsec-instant}

Our model assumes that packet transmissions across links occur instantly at the epochs of communication point processes. The stability results, in Sections~\ref{sec:random}-\ref{sec:single-path}, still hold if packets require some random transmission time on a link. In this case link capacity $c_{uv}=1/m_{uv}$, where $m_{uv}$ is the mean packet transmission time on link $(u,v)$.
In the special case when the arrival processes are Poisson {\em and} the transmission times happen to be i.i.d. exponentially distributed {\em and} each link uses preempt/resume rule while transmitting its useful packets, the resulting
system process will be {\em exactly same} as in our model with instant communications (and under Markov assumptions); therefore, our stability results hold as is. Under more general assumptions on the arrival processes and transmission time distributions, our stability results easily generalize, again (just as with the generalizations in Section~\ref{subsec-markov}), as long as the system process is Markov, with possibly non-countable state space.
Given these generalizations, it is fair to say that the assumption of instant communication over the links is non-essential.

\subsubsection{Assumption of the Knowledge of Packet Availability at Neighboring Nodes}
\label{subsec-knowledge}

Our model assumes that each node $u$ knows which of its available packets are useful on each of its
outgoing links $(u,v)$. This assumption is a slight idealization of the realistic situation where the time/resource required for a node to inform its neighbors about receiving a new packet is negligible compared to the time required to actually transmit a packet ``payload'' over a link. And as we explained in Section~\ref{subsec-instant}, our results do apply to 
the system with positive expected transmission times $m_{uv}=1/c_{uv}$ over links.

\subsection{Basic Notation}
We use the notations $A \subseteq B$ and $A \subset B$ to denote subsets and strict subsets, respectively.
For a set $S$, $|S|$ denotes its cardinality. Abbreviation {\em a.s.} means {\em almost surely}; $\Rightarrow$
signifies convergence in distribution; $\mathrm{Poisson}(\psi)$ means {\em Poisson distribution with mean $\psi$}.

\section{Random-Useful and Selfish Disciplines' Stability Results}
\label{sec:random}

\subsection{Random-Useful Discipline Maximum Stability}
\label{subsec:random}
In this section we discuss the maximum stability of the Random-Useful discipline, for arbitrary network topologies.

The following previous result proved that, under Random-Useful, condition \eqn{eq-cut-cond} is not only necessary, but also sufficient for stability, for an arbitrary network with a single source.

\begin{theorem}[theorem 3 in 
\cite{massoulie2008rate}]
\label{th-massoulie}
Under Random-Useful discipline, any network $H$ with a single source node $s$ is stable iff condition \eqn{eq-cut-cond} holds.
(Thus, Random-Useful has maximum stability region for single-source networks.)
\end{theorem}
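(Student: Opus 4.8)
Necessity is immediate: as noted in the text, condition \eqn{eq-cut-cond} is necessary for stability under \emph{any} discipline, so only sufficiency requires work. With a single source, \eqn{eq-cut-cond} reduces to $\lambda_s < C_S := \sum_{u \in S,\, v \notin S} c_{uv}$ for every $S \subset V$ with $s \in S$; that is, $\lambda_s$ lies strictly below the minimum $s$-cut. By Edmonds' arborescence-packing theorem (the broadcast max-flow--min-cut theorem, in its weighted/rate form), this is exactly the condition under which a broadcast flow of rate $\lambda_s$ from $s$ to all nodes is feasible. Thus the target rate is information-theoretically achievable, and the entire content of the statement is that the \emph{decentralized, memoryless} Random-Useful rule attains it.

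I would prove sufficiency by a fluid-limit argument. Encode the state as $(n_W)$, where $n_W$ is the number of present packets whose current holder set is exactly $W$ (with $s \in W \subsetneq V$); an equivalent descriptor is the deficit vector $D_v = \sum_{W:\, v \notin W} n_W$. Under the Markov assumptions the rates are explicit: link $(u,v)$ fires at rate $c_{uv}$ and, whenever the number $T_{uv} = \sum_{W:\, u \in W,\, v \notin W} n_W$ of packets useful on $(u,v)$ is positive, moves one packet from some eligible $W$ to $W \cup \{v\}$, the particular $W$ being chosen with probability $n_W / T_{uv}$. Invoking the standard criterion that stability of the associated fluid model implies positive recurrence of the chain, it then suffices to show that every fluid trajectory reaches the empty state in finite time.

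For the fluid model I would work with the cut-deficits $\Delta_S = \sum_{W \subseteq S} n_W$, the amount of fluid still held entirely inside $S$. Arrivals increase $\Delta_S$ at rate $\lambda_s$, while $\Delta_S$ decreases only when a packet lying inside $S$ is transmitted across a cut edge $(u,v)$ with $u \in S$, $v \notin S$. If one could show that whenever $\Delta_S > 0$ the cross-cut transport of inside-$S$ fluid proceeds at rate essentially $C_S$, then $\frac{d}{dt}\Delta_S \le \lambda_s - C_S < 0$, so every $\Delta_S$ drains; since each $D_v$ is some $\Delta_S$ and the total packet count is dominated by $\sum_v D_v$, the whole system would then empty.

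The main obstacle is precisely that last claim. A cut edge $(u,v)$ fires at rate $c_{uv}$, but the useful packet it selects uniformly at random may be one that has \emph{already} crossed $S$ elsewhere (present outside $S$, merely absent at $v$), so that transmission does not reduce $\Delta_S$: the randomness can squander cut capacity on already-disseminated packets. This wastage is invisible to the naive linear Lyapunov function $\sum_v D_v$, which only reproduces work-conservation and yields zero rather than negative drift at the capacity boundary, so a finer potential is unavoidable. My plan to control it is to use a Lyapunov function built from a suitably weighted combination of the $\Delta_S$ (or a maximum over normalized cut-deficits $\Delta_S / C_S$), with weights taken from the dual of the broadcast max-flow, and to argue that the per-cut drain rate guaranteed by Edmonds' theorem strictly dominates the random wastage once one focuses on a maximal critical cut. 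Verifying this drift inequality—equivalently, showing that Random-Useful's link-local random selections, aggregated over the network, can never leave a critical cut persistently starved—is the step I expect to be genuinely hard; a possible alternative route is to couple the uncoded dynamics with a random-linear-network-coding system, for which min-cut achievability is classical, though matching the two is itself nontrivial.
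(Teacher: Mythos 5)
Your setup coincides with the proof this theorem actually receives (in \cite{massoulie2008rate}, with the multi-source adjustment spelled out in Appendix~\ref{sec-ru-gen-proof}): necessity from the cut condition, sufficiency via fluid limits, and the cut-confined fluid mass $y_{\subseteq S}$ -- your $\Delta_S$, the fluid originating in $S$ and not yet present at any node outside $S$ -- as the quantity whose drift must be controlled. You also correctly diagnose the crux: a cut edge $(u,v)$ may transmit a packet that has already escaped $S$ elsewhere, so cut capacity can be wasted and the naive drift bound fails. But your proposal stops exactly there. The two routes you sketch (a dual-weighted combination of the $\Delta_S$, or a coupling with random linear network coding) are left entirely undeveloped, and neither is what closes the argument; as the paper itself notes in Remark~\ref{rem-ru-diversity}, the ``diversity'' of Random-Useful does not by itself easily yield maximum stability, and the missing step is the genuinely involved part of the theorem, not a routine verification.

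The idea you are missing is the careful \emph{choice of the critical set} combined with RU's proportional-selection property. In the actual proof one does not bound the drift of every $\Delta_S$ simultaneously; one selects a set $S^*$ (containing $s$) that is, roughly, \emph{maximal} among near-maximizers of the confined mass. Maximality forces the fluid mass of packets that are useful on a cut edge $(u,v)$, $u \in S^*$, $v \notin S^*$, but already present outside $S^*$, to be at most an arbitrarily small fraction $\alpha$ of the relevant totals; since Random-Useful picks a useful packet uniformly at random -- i.e., each class of useful packets is served in proportion to its mass -- the rate at which the cut wastes capacity on already-escaped packets is $O(\alpha)$, uniformly. This yields the key estimate
$$
\frac{\mathrm{d}}{\mathrm{d}t}\, y_{\subseteq S^*} \;\le\; \lambda_s \;-\; \Bigl[\, \sum_{\substack{u \in S^* \\ v \notin S^*}} c_{uv} \;-\; \bigl(\max_{e \in E} c_e\, |E|\, 2^{K}\bigr)\alpha \,\Bigr],
$$
which is strictly negative for small $\alpha$ by \eqn{eq-cut-cond}, and from which emptiness of the fluid model follows. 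Without the maximality device (or some substitute quantifying why the uniform random choice overwhelmingly selects confined packets at a critical cut), your drift inequality is an unproven assertion, so the proposal as written has a genuine gap at its central step.
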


Examination of the proof of this result in \cite{massoulie2008rate} shows that it can be extended to multi-source networks as well, so we obtain the
following more general result.

\begin{theorem}
\label{th-ru-generalization}
Under Random-Useful discipline, any network $H$ is stable iff condition \eqn{eq-cut-cond} holds.
(Thus, Random-Useful has maximum stability region.)
\end{theorem}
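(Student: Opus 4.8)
The plan is to establish that condition \eqn{eq-cut-cond} is sufficient for stability under Random-Useful in the general multi-source setting, since necessity already follows from the cut argument given before \eqn{eq-cut-cond}. The natural strategy is to \emph{reduce the multi-source problem to the single-source result} of Theorem~\ref{th-massoulie}, rather than reprove everything from scratch. The key observation I would exploit is that under Random-Useful the dynamics of each packet depend only on its \emph{current availability set} (which nodes hold it) and not on its origin: once a packet exists at a node, the sending node treats all useful packets symmetrically via uniform random choice. This suggests that a multi-source network behaves like a superposition of single-source flows that only interact through the shared link capacities.

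Concretely, first I would set up the fluid/Lyapunov framework in which \cite{massoulie2008rate} works (or, if the original proof is via a direct Foster--Lyapunov argument on a suitable workload-type function, reproduce that structure). The main technical device I expect to need is a transformation that \emph{splits} the multi-source system into single-source subsystems. One clean way is to augment the graph: introduce a virtual super-source $s^*$ together with virtual links $(s^*, u)$ of very large (effectively infinite) capacity carrying the exogenous flow that would arrive at each $u$, so that every real arrival is modeled as a packet created at $s^*$ and instantly relayed to its actual origin $u$. If this augmentation can be made to preserve the Random-Useful dynamics on the original links, then Theorem~\ref{th-massoulie} applies to the single-source augmented network, and one checks that the cut condition \eqn{eq-cut-cond} on the augmented graph is equivalent to the original condition. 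I would verify the cut correspondence by noting that any cut $S$ separating $s^*$ must either contain all virtual links (recovering $\sum_{u\in S}\lambda_u < \sum_{u\in S, v\notin S} c_{uv}$) or be dominated by the infinite virtual capacities, which cannot be the binding cut.

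The main obstacle, and the step requiring the most care, will be verifying that the augmented single-source system is \emph{genuinely equivalent} to the original multi-source system in a way that faithfully transfers positive recurrence. The subtlety is that the ``infinite capacity'' virtual links are an idealization: a packet created at $s^*$ must appear at its true origin $u$ instantaneously and not sit in a virtual queue, and one must ensure this does not alter the Random-Useful selection probabilities on the real links or introduce spurious instability in the virtual portion. I would handle this either by taking a capacity limit and arguing monotonicity (larger virtual capacity only helps stability, and the original system is recovered in the limit), or by directly re-examining the proof of \cite{massoulie2008rate} to confirm that its Lyapunov argument and its use of the cut condition never actually rely on the single-source assumption beyond the structure that the augmentation reproduces. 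In fact, the cleanest route may be the latter: inspect the Lyapunov/fluid-limit argument of Theorem~\ref{th-massoulie} and check, term by term, that replacing ``the single source $s$'' by ``the set of sources with rates $\lambda_u$'' leaves every inequality intact, with \eqn{eq-cut-cond} providing exactly the strict slack needed for negative drift. Since the excerpt itself states that ``examination of the proof \ldots shows that it can be extended,'' I expect the honest proof to consist precisely of this verification, and I would present it as such, flagging the cut-correspondence check as the one place where the multi-source generalization genuinely needs an argument rather than a transcription.
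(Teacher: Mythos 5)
Your primary device --- the super-source augmentation --- fails, and for a concrete reason: Random-Useful does not respect packet origins. In the augmented network, every packet present at $s^*$ and absent at a node $w$ is useful on the virtual link $(s^*,w)$, so a packet ``meant for'' $u$ will be transmitted on \emph{all} virtual links $(s^*,w)$, not relayed to its true origin only. With very large virtual capacities this means every packet is disseminated to all $N$ nodes directly through $s^*$, bypassing the real links entirely: the augmented system is stable for \emph{every} arrival rate vector, and your claimed cut correspondence is false rather than merely delicate (any cut $S\ni s^*$ has infinite out-capacity; any cut $S\not\ni s^*$ receives no exogenous arrivals, so \eqn{eq-cut-cond} on the augmented graph is vacuous and never recovers the original condition). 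Repairing this would require the virtual links to filter packets by intended destination, i.e., a non-Random-Useful rule on those links, which takes you outside the scope of Theorem~\ref{th-massoulie}; and the ``capacity limit plus monotonicity'' fallback does not help, since the problem is not the size of the virtual capacities but that the augmented dynamics are not a faithful embedding of the multi-source system at any capacity.

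Your fallback route, however, is exactly what the paper does (Appendix~\ref{sec-ru-gen-proof}): re-examine the fluid-limit proof of theorem 3 in \cite{massoulie2008rate} and check that nothing uses the single-source assumption beyond the arrival term. The key estimate there,
\begin{equation*}
\frac{\mathrm{d}}{\mathrm{d}t}\,y_{\subseteq S^*} \;\le\; \lambda_s \;-\; \Bigl[\,\sum_{\substack{u \in S^* \\ v \notin S^*}} c_{uv} \;-\; \bigl(\max_{e \in E} c_e\,|E|\,2^K\bigr)\alpha \Bigr],
\end{equation*}
has a lower bound on the outflow from $S^*$ that holds verbatim for multiple sources; one only replaces $\lambda_s$ by $\sum_{u \in S^*}\lambda_u$ and invokes \eqn{eq-cut-cond} with $S = S^*$ to get strict negative drift. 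So the honest fix to your write-up is to delete the augmentation entirely and present only the term-by-term verification you correctly anticipated as ``the cleanest route.''
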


The proof of Theorem~\ref{th-ru-generalization} is an adjustment of the proof of theorem 3 in \cite{massoulie2008rate},
which is given in Appendix~\ref{sec-ru-gen-proof}.

\subsection{Selfish Discipline Stability in a Complete Homogeneous Network}
\label{sec-opportune}

The following fact is rather simple. We present it for completeness.

\begin{lemma}
	Consider a complete homogeneous network (Definition~\ref{def-homogen}) under the Selfish discipline.
	(Recall that, without loss of generality, all link capacities are equal to $1$.)
	Then the process $X(t)$ is stable if $\lambda_u < 1$ for all nodes $u$.
	If, in addition, the network is symmetric ($\lambda_u = \lambda$ for all nodes $u$), it is stable iff $\lambda < 1$.	
	\label{lem:selfish}
\end{lemma}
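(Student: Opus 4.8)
The plan is to exploit the defining feature of the Selfish discipline together with the completeness of $H$: on every outgoing link $(u,v)$ of a node $u$, the packets originating at $u$ that are not yet present at $v$ have strict priority, and since $H$ is complete, $u$ is directly linked to all $N-1$ other nodes, so $u$ alone can disseminate each of its own packets without relying on any relay. For each ordered pair $(u,v)$ with $v \neq u$, let $n^{(u)}_v(t) = |x_{uv}(t)|$ denote the number of packets with source $u$ that are present and not yet available at $v$. The key structural observation I would record first is that, by the Selfish priority rule, whenever $n^{(u)}_v>0$ there is a useful own packet on link $(u,v)$, so at that link's communication epochs (rate $1$) such a packet is transmitted to $v$; hence $n^{(u)}_v$ decreases at rate at least $1$ while positive (purely from direct transmissions), increases at rate $\lambda_u$ at $u$'s arrivals, and receives only \emph{additional} downward jumps when some other node relays a source-$u$ packet to $v$. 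Intuitively this already says each source behaves like $N-1$ parallel rate-$1$ queues fed at rate $\lambda_u$, stable precisely when $\lambda_u<1$, with relaying only helping.

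To turn this into a rigorous stability proof for the full chain $X(t)$ I would use a single quadratic Foster--Lyapunov function
$$V(X) \;=\; \tfrac12 \sum_{u}\sum_{v\neq u}\big(n^{(u)}_v\big)^2 .$$
First I would check coercivity with finite sublevel sets: if $V(X)\le M$ then every $n^{(u)}_v=|x_{uv}| \le \sqrt{2M}$, so the total number of present (labeled) packets is bounded and hence $\{V\le M\}$ contains only finitely many states of $X$. Then I would compute the generator drift $\cL V$ by summing the contributions of the three event types. An arrival at $u$ (rate $\lambda_u$) increments all $n^{(u)}_v$, contributing $\lambda_u\big(\sum_{v}n^{(u)}_v + (N-1)/2\big)$; a direct transmission on $(u,v)$ (rate $1$ when $n^{(u)}_v>0$) contributes $-n^{(u)}_v+\tfrac12$; and every relay event decrements some $n^{(u)}_v\ge 1$, contributing $-n^{(u)}_v+\tfrac12\le 0$. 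Collecting terms and discarding the nonpositive relay contributions yields
$$\cL V(X)\;\le\;\sum_{u}(\lambda_u-1)\sum_{v\neq u}n^{(u)}_v \;+\; C\;\le\;-\delta\sum_{u}\sum_{v\neq u}n^{(u)}_v + C,$$
where $\delta=\min_u(1-\lambda_u)>0$ and $C$ depends only on $N$. Since $\sum_{u,v}n^{(u)}_v\ge\sqrt{2V}$, the right-hand side tends to $-\infty$, so $\cL V\le -1$ outside a finite set; as the total jump rate is uniformly bounded, the Foster--Lyapunov criterion gives that $X(t)$ is positive recurrent. This establishes stability whenever $\lambda_u<1$ for all $u$.

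For the symmetric case, sufficiency ($\lambda<1 \Rightarrow$ stable) is the special case $\lambda_u\equiv\lambda$ of the above. For necessity I would invoke the cut bound \eqref{eq-cut-cond}, which is necessary under any discipline: taking $S=V\setminus\{w\}$ for a single node $w$ gives $(N-1)\lambda < (N-1)\cdot 1$, i.e.\ $\lambda<1$ (this is the tightest cut, so \eqref{eq-cut-cond} is equivalent to $\lambda<1$ here). Hence in the symmetric system the chain is stable iff $\lambda<1$.

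The step I expect to require the most care is the drift accounting, specifically the two claims that (i) a direct transmission removes a source-$u$ packet from $v$ at full rate $1$ exactly whenever $n^{(u)}_v>0$ --- which is where the Selfish priority and completeness of $H$ are essential --- and (ii) relay events, whose rates and targets depend on the global state in a complicated way, can be safely dropped because each contributes a nonpositive increment to $V$. An alternative, more hands-on route would be a pathwise monotone coupling of the true system to $N$ independent ``direct-delivery-only'' subsystems (one per source, using disjoint link clocks, each a vector of coupled M/M/$1$ queues with total occupancy $\max_v n^{(u)}_v$); but the Lyapunov argument above sidesteps the delicate verification that relaying preserves the coupling order, so I would prefer it.
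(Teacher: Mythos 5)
Your proof is correct, and it takes a genuinely different route from the paper's. The paper argues by comparison: it first considers the modified system in which each node transmits \emph{only} its own packets, observes that each $Q_{uv}(t)=|x_{uv}(t)|$ then evolves as a sub-critically loaded M/M/1 queue (arrival rate $\lambda_u<1$, service rate $1$), deduces uniform-in-$t$ stochastic boundedness of $\sum_{(u,v)}Q_{uv}(t)$, and then invokes a monotone coupling to claim that allowing relays can only reduce the total packet count, hence stability of the Selfish system. You instead run a direct quadratic Foster--Lyapunov argument on the actual Selfish chain, built on exactly the structural facts underlying the paper's comparison: every source-$u$ packet still in the system remains available at $u$ (so $x_{uv}\neq\varnothing$ implies a useful own packet on $(u,v)$, which by the Selfish priority and completeness of $H$ is served at full rate $1$), and every relay event decrements exactly one counter $n^{(u)}_v\ge 1$, contributing $-n^{(u)}_v+\tfrac12\le 0$ to the drift of $V$ and so can be discarded. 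Your accounting checks out: the drift bound $\cL V\le\sum_u(\lambda_u-1)\sum_{v\ne u}n^{(u)}_v+C$, the coercivity $\sum_{u,v}n^{(u)}_v\ge\sqrt{2V}$ with finite sublevel sets, and the bounded total jump rate $\sum_u\lambda_u+N(N-1)$ together give positive recurrence by the standard criterion; the necessity step via the cut \eqn{eq-cut-cond} with $S=V\setminus\{w\}$ matches the paper. What your route buys: it sidesteps the coupling verification the paper dismisses as ``straightforward'' (precisely the step you flagged as delicate), and it yields positive recurrence directly from a drift condition --- worth noting since the paper's inference from uniform stochastic boundedness of the dominating system to positive recurrence of the chain is itself stated informally, whereas your argument is self-contained and would additionally give first-moment bounds on $\sum_{u,v}n^{(u)}_v$ in steady state. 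What the paper's route buys is brevity and the transparent intuition of domination by $N(N-1)$ coupled sub-critical M/M/1 queues.
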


\begin{proof}
The network is stable if each node $u$ communicates {\em only} its ``own'' packets -- those originating at the node itself. 
Indeed, if we denote by $Q_{uv}(t)=|x_{uv}(t)|$ the number of packets originating at node $u$ and not available at node $v$ yet, then the evolution of $Q_{uv}(t)$ is that of a sub-critically loaded M/M/1 queue. (Processes $Q_{uv}(\cdot)$
for different links $(u,v)$ are not independent.) 
Therefore, for any fixed initial state 
of the system, each $Q_{uv}(t)$ is stochastically upper bounded: for any $\epsilon>0$, there exists $C>0$, such that
$\pr\{Q_{uv}(t) > C\} \le \epsilon$, uniformly in $t$. Then $\sum_{(u,v)} Q_{uv}(t)=\sum_{(u,v)} |x_{uv}(t)|$, 
which is an upper bound on the total number of packets in the network, 
is also stochastically upper bounded uniformly in $t$. This implies stability.

Now, if, in addition, nodes communicate other useful packets (when own ones are unavailable), a straightforward coupling can be used to show that the total number of packets in the network at any time can only be smaller or equal.
This implies stability under Selfish discipline.

Finally, recall that, for a symmetric network, the necessary stability condition \eqn{eq-cut-cond} is equivalent to $\lambda < 1$. 
\end{proof}

\section{Oldest-Useful Discipline Stability Results}
\label{sec:oldest-first}

In this section, we discuss the stability of the Oldest-Useful discipline.
We first give an example of a network in which the Oldest-Useful discipline does not achieve the maximal stability region.
Next, we show that a complete homogeneous network is stable under Oldest-Useful when the arrival rates at all nodes are less than the (equal) link capacities; this, in particular, implies the maximum stability of symmetric networks.

\subsection{Oldest-Useful Does Not Achieve Maximum Stability}

Recall that condition \eqn{eq-cut-cond}  is necessary for stability.
We now show an example demonstrating that condition \eqn{eq-cut-cond} is {\em not} sufficient for stability under the Oldest-Useful discipline;  
this will prove that, in general, Oldest-Useful does {\em not} achieve maximum stability region.
We provide a detailed sketch of the argument here, which highlights the key ``reason'' for instability; turning this sketch into a detailed rigorous proof is straightforward.

Consider the network of $3$ nodes in Figure \ref{fig:counterexample}, with the Oldest-Useful discipline.
Communications on any link are attempted at rate $1$.
\begin{figure}[h!]
\center
   	\begin{tikzpicture}
   		\draw[->, >= stealth] (1.732*0.21, 0.21) -- (1.732*0.8, 0.8) node[midway, above] {$1$};
   		\draw[->, >= stealth] (1.732*0.21, -0.21) -- (1.732*0.8, -0.8) node[midway, below] {$1$};
   		\draw[->, >= stealth] (1.632, 0.6) -- (1.632, -0.6) node[midway, left] {$1$};  
   		\draw[<-, >= stealth] (1.832, 0.6) -- (1.832, -0.6) node[midway, right] {$1$};
		\draw[fill=white] (0,0) circle (10pt);
		\draw[fill=white] (1.732, 1) circle (10pt);
		\draw[fill=white] (1.732, -1) circle (10pt);
		\node at (0, 0) {$1$};
		\node at (1.732, 1) {$2$};
		\node at (1.732, -1) {$3$};
		
		\node at (-2.25, 0) {$\lambda_1 = 2-\varepsilon$};
		\node at (3.732, 1) {$\lambda_2 = 0$};
		\node at (3.732, -1) {$\lambda_3 = 0$};
		\draw[->, >= stealth] (-1.3, 0) -- (-0.5, 0);
		\draw[->, >= stealth] (3.132, 1) --(2.232, 1);
		\draw[->, >= stealth] (3.132, -1) --(2.232, -1);
	\end{tikzpicture}
	\caption{An example of an Oldest-Useful network which does not achieve the maximal stability region.}
	\label{fig:counterexample}
\end{figure}
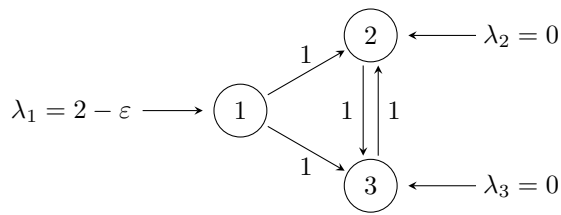

We set $\lambda_1 = 2 - \varepsilon, \lambda_2 = \lambda_3 = 0$, where $\varepsilon >0$ is small and will be specified later. 

The basic intuition for this system instability under Oldest-Useful is as follows. All exogenous packet arrivals occur at node $1$. All these packets need to cross the cut from node $1$ to nodes $2$ and $3$, i.e. each of them will be communicated  on link $(1,2)$ or link $(1,3)$ {\em or both}. As we will see shortly, a non-zero fraction of the packets will be communicated {\em on both} links $(1,2)$ and $(1,3)$. But, the total capacity of these two links is $2$. Therefore, {\em distinct} packets will cross the cut from node $1$ to nodes $2$ and $3$ at the rate which is strictly less than $2$. 
(If same packet is transmitted on  both links $(1,2)$ and $(1,3)$, then still only one distinct packet crossed the cut.) 
This means that if 
$\lambda_1 = 2 - \varepsilon$ is sufficiently close to $2$, the number of packets at node $1$ will grow to infinity. 
We now proceed with details.

For this particular system, a state of the Markov process, can be encoded as follows. 
(This state representation is equivalent to $X(t)$.)
We will use examples. State $(2,2,2,1,1,1,1)$ describes the network, which currently has 7 packets. They all arrived at node $1$ (because this is the only node where exogenous arrivals occur). The packets are listed in the order of their exogenous arrivals, starting from the oldest. Each symbol '2' corresponds to a packet present in nodes $1$ and $2$, but not $3$. Each symbol '1' corresponds to a packet present in node $1$ only. Note that, by the Oldest-Useful definition, all 2-packets must be at the beginning of the sequence, because each of them must be ``older'' than any 1-packet. Another example of a state is $(3,3,1,1,1)$;
here symbol '3' corresponds to packets present in nodes $1$ and $3$, but not $2$. Note that, by the Oldest-Useful definition, we can never have a state which has both 2-packets and 3-packets. (Because, for example, a state
$(2,3,2,1,1)$ would mean that a ``younger'' packet was communicated on link $(1,3)$ when node $1$ had available an ``older'' packet, which was useful at node $3$.) Let us refer to 2-packets and 3-packets as ``in-transit'' packets. States, which have no in-transit packets, we will call ``renewal.''

Consider an initial state, which is renewal and has very large fixed number of packets; that is a state like 
$(1,1,1,1,1,\ldots, 1,1)$. Starting this renewal state, consider the ``projected'' process that only tracks the in-transit packets, while ``ignoring'' the very large number of 1-packets, appended at the end. In other words, if a state is $(2,2,2,1,1,\ldots,1,1)$, we only consider its projection $(2,2,2)$. The empty state of the projected process is a renewal state of actual process. It is easy to see that the projected process is stable, that is the mean inter-renewal time is finite. 
Indeed, consider a projected state consists of say 2-packets only (recall that it cannot contain 2-packets and 3-packets simultaneously), like $(2,2,2)$. Then the rate at which 2-packets are added (due to communications on link $(1,2)$)   is $1$, while the rate at which 2-packets are eliminated (due to communications on links $(1,3)$ and $(2,3)$) is $2$.
Therefore, the number of packets in a projected state has negative drift, which guarantees stability.

Now, we claim that, when the projected process is in steady-state, new in-transit packets are created at the rate $2-\delta < 2$. Indeed, a new in-transit packet is created when and only when a 1-packet is communicated on either link $(1,2)$ or $(1,3)$. When say 2-packet is communicated on link $(1,3)$, no new in-transit packet is created (and in fact this 2-packet is eliminated); similarly, when a 3-packet is communicated on link $(1,2)$, no new in-transit packet is created. 
Note that after any new in-transit packet, say 2-packet, is created (due to 1-packet being communicated on link $(1,2)$), with probability at least $1/4$ the next packet communicated by node 1 will be that newly created 2-packet 
communicated on link $(1,3)$. (Because with probability $1/4$ the next communication in the entire system will occur on link $(1,3)$, upon which, necessarily, that  2-packet will be sent.) This means that, in steady-state, a non-zero fraction of packets communicated on links $(1,2)$ and $(1,3)$ will be in-transit packets. But the total rate of all communications on links $(1,2)$ and $(1,3)$ is the total capacity of those links, which is $2$. Thus, that rate at which new in-transit packets are created is strictly less than $2$, which proves the claim.

Now recall that we consider the initial state $(1,1,1,1,1,\ldots, 1,1)$ of the actual process. Its evolution is such that the new 1-packets are added at rate $2-\varepsilon$, while 1-packets are ``converted'' to in-transit packets at rate $2-\delta$.
We conclude that, if $0< \varepsilon < \delta$, the number of 1-packets will have a strictly positive drift, and will tend to infinity with probability $1$. Thus, this network is unstable, despite satisfying the condition \eqn{eq-cut-cond}.

\begin{rem}
\label{rem-ru-diversity}
From Theorem~\ref{th-ru-generalization} we know that the above specific system (as well as any system 
satisfying \eqn{eq-cut-cond}) must be stable under the Random-Useful discipline. However, for this specific system,
there is a ``simple reason'' for Random-Useful stability. Indeed, if the number of packets at node $1$ is very large, then under the Random-Useful only a very small fraction of packets being communicated across the cut will use both links $(1,2)$ and $(1,3)$. Thus, the ``diversity'' feature of Random-Useful prevents it, in this system, from ``wasting'' a non-zero fraction of the capacity of the cut from node $1$ to nodes $2$ and $3$. However, it is important to note that, for a general network model that we consider in this paper, the Random-Useful diversity feature alone does {\em not} easily lead to its maximum stability. This requires a relatively involved proof of Theorem~\ref{th-ru-generalization}.
\end{rem}

\subsection{Oldest-Useful Stability in a Complete Homogeneous System} 
\label{sec-ou-stabil}

Our main stability result for the Oldest-Useful discipline is as follows.
\begin{theorem}
	Consider a complete homogeneous network (Definition~\ref{def-homogen}) under the Oldest-Useful discipline.
	(Recall that, without loss of generality, all link capacities are equal to $1$.)
	Then:\\
	(i) Process $X(t)$ is stable if $\lambda_u < 1$ for all nodes $u$.\\
	(ii) If, in addition, the network is symmetric ($\lambda_u = \lambda$ for all nodes $u$), $X(t)$ stable iff $\lambda < 1$.	
	\label{thm:main-result}
\end{theorem}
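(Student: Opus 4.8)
The plan is to prove part (i)---stability of the complete homogeneous network under $\lambda_u < 1$ for all $u$---and then obtain (ii) essentially for free: for a symmetric system the necessary condition \eqn{eq-cut-cond} reduces to $\lambda < 1$, so (i) supplies sufficiency and \eqn{eq-cut-cond} supplies necessity. For (i) I would not work with the full state $X(t)$ directly; instead, for each node $v$ let $R_v(t)$ denote the number of packets currently present that have \emph{not} reached $v$ (necessarily foreign packets, i.e.\ not originating at $v$). Since a stage-$i$ packet is missing from exactly $N-i$ nodes, the total remaining work $W(t) = \sum_{v \in V} R_v(t)$ satisfies $M(t) \le W(t) \le (N-1)M(t)$, where $M(t)$ is the number of packets present; hence positive recurrence of $X(t)$ is equivalent to control of the vector $(R_v)_{v \in V}$, and I would establish it through a fluid-limit (Dai-type) criterion, the chain being countable and irreducible.

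The structural fact that makes Theorem~\ref{thm:main-result} true---and that I would isolate first---is that Oldest-Useful renders the collection of oldest packets \emph{autonomous}: on every link the oldest useful packet is sent, so the trajectory of the $k$ oldest packets is unaffected by any younger packet. In particular, the oldest packet not yet present at $v$ propagates among the other $N-1$ nodes exactly as in the \emph{free system}, unimpeded. This is the mechanism I would use to control the service of $R_v$: the quantity $R_v$ grows at rate $\sum_{u \ne v}\lambda_u$, which is $< N-1$ since $\lambda_u < 1$ for all $u$, and it decreases by one at each useful transmission on an incoming link $(u,v)$. Because $v$ never receives the same packet twice, the instantaneous departure rate of $R_v$ equals $B_v(t)$, the number of incoming links of $v$ currently carrying a useful packet---equivalently, the number of non-$v$ nodes holding at least one not-at-$v$ packet---and $B_v \le N-1$, with equality exactly when the un-received packets occupy all of the other nodes.

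Everything then reduces to the following assertion: when $R_v$ is large, $B_v$ is close to $N-1$. This is where the autonomy enters quantitatively. The oldest not-at-$v$ packet, sitting at $i$ of the non-$v$ nodes, spreads to a further non-$v$ node at the free rate $i(N-1-i)$ while being delivered to $v$ only at rate $i$; thus, barring delivery, it rapidly fills all $N-1$ other nodes, and a standing backlog of un-received packets keeps those nodes populated, forcing $B_v = N-1$. I would make this rigorous in the fluid model by a time-scale-separation / state-space-collapse argument: the spreading dynamics run at rate $O(N)$, so on the fluid time-scale the oldest fluid instantaneously occupies every non-$v$ node whenever $\bar R_v > 0$, giving $\dot{\bar R}_v \le \big(\sum_{u \ne v}\lambda_u\big) - (N-1) < 0$ on $\{\bar R_v > 0\}$. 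Each fluid coordinate therefore reaches zero in finite time and stays there, and Dai's theorem upgrades this fluid stability to positive recurrence of $X(t)$.

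I expect the collapse step---proving that a large receiving backlog at $v$ really does keep essentially all incoming links busy---to be the crux. Instantaneously $B_v$ is \emph{not} a function of $R_v$ (a transient configuration with all not-at-$v$ packets piled at a single node has $R_v$ large but $B_v = 1$), so no single-step drift of $\sum_v R_v$ is uniformly negative; one genuinely needs the averaging provided by the fluid limit, together with the free-system spreading estimate, to show that such concentrated configurations are dissolved on the fast time-scale. Once the fluid service rate is pinned at $N-1$ on every backlogged coordinate, the remaining work (existence of fluid limits, the drift inequality above, and the passage back to the Markov chain) is routine.
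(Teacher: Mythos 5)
Your framework (fluid limits, Dai's criterion, reduction of (ii) to (i) plus the necessity of \eqn{eq-cut-cond}) matches the paper's, and two of your building blocks are sound: the drain rate of $R_v$ is indeed the number $B_v$ of busy incoming links, and the $k$ oldest packets do evolve autonomously under Oldest-Useful. But the step you yourself identify as the crux --- that $\bar R_v>0$ forces the fluid service rate to equal $N-1$, i.e.\ essentially all incoming links of $v$ busy --- is a genuine gap, and I do not believe it can be repaired by the fast-spreading heuristic you offer. Two concrete problems. First, your supporting assertion that ``the oldest packet not yet present at $v$ propagates among the other $N-1$ nodes exactly as in the free system'' is false: only the \emph{globally} oldest packet is unimpeded; the oldest packet missing $v$ can be blocked on a link $(w,w')$ by an even older packet that has already reached $v$ but not $w'$. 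Second, and more fundamentally, each busy incoming link of $v$ delivers a distinct previously-undelivered packet at rate $1$, so the time-average of $B_v$ equals the rate of distinct deliveries into $v$; when the backlog is concentrated at a single origin $u$, this rate is bottlenecked on the \emph{source} side by the throughput of distinct packets out of $u$ --- at most $N-1$, and strictly degraded by Oldest-Useful's duplicate transmissions ($u$ re-sending the currently oldest packet on several outgoing links before the relays catch up), which is exactly the capacity-wasting mechanism behind the paper's three-node instability example. Since $\max_u\lambda_u$ may be $1-\epsilon$, your drift inequality $\dot{\bar R}_v \le \bigl(\sum_{u\ne v}\lambda_u\bigr)-(N-1)$ requires average utilization arbitrarily close to $N-1$, i.e.\ asymptotically zero duplication loss; the spreading estimate bounds how fast one old packet traverses the network but says nothing about the sustained distinct-packet throughput of the pipeline, and your sketch supplies no such bound.

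The paper sidesteps precisely this difficulty by choosing a different Lyapunov function: not per-destination counts, but the (scaled) arrival time $\tau(t)$ of the oldest undelivered fluid, tracked through virtual queues of $(u,v)$-packets (one per ordered pair). The nestedness property you noticed --- a younger packet with the same origin occupies a subset of the nodes occupied by an older one --- makes each $(u,v)$ virtual queue FIFO, which converts service into progress of the oldest age: $\tau'_{uv}(t)\ge \lambda_u^{-1}\sum_{w}\mu_{uv,w}(t)$ (Lemma~\ref{lem:tau-deriv-estimate}). Crucially, full utilization is then needed only on the \emph{critical} links, those whose virtual queue holds the globally oldest fluid; for such a link $(u,v)$ this is immediate, because the oldest undelivered $(u,v)$-packet always sits at its origin $u$ and is useful on $(u,v)$, and Oldest-Useful guarantees the link serves only fluid of the minimal age (Lemma~\ref{lem:critical-links-busy}). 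Summing over critical links yields $\tau'(t)\ge 1/\max_u\lambda_u>1$ whenever $\tau(t)<t$, with no state-space-collapse statement about all incoming links. To complete your route you would have to prove your much stronger utilization claim --- including a uniform lower bound on distinct-packet throughput out of a congested source under Oldest-Useful, worst-case over configurations --- whereas the age-based argument makes that quantification unnecessary; absent such a proof, your proposal remains an incomplete sketch at its decisive step.
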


Statement (ii) is a corollary of (i), 
because for a symmetric network the necessary stability condition \eqn{eq-cut-cond} is equivalent to $\lambda < 1$.
The rest of Section~\ref{sec-ou-stabil} contains the proof of Theorem~\ref{thm:main-result}(i). 
We remark that Theorem~\ref{thm:main-result}(i) holds for a more general case, when all $\lambda_u < 1$ and all link capacities {\em are greater than or equal $1$.} Same proof works with straightforward adjustments.

\subsubsection{Proof Highlights}
\label{subsubsec:proof-highlights}
We use the \emph{fluid limit} technique (see, \textit{e.g.}, \cite{rybko1992ergodicity, dai1995positive, stolyar1995stability,Bramson-book}).
 This involves considering a sequence of processes, scaled in both space and time, which converges to a limiting process
with trajectories called fluid limits. Then it suffices to show that,  there exists a time $T > 0$ such that,
for any fluid limit with ``norm'' $1$ of the initial state, the state norm reaches $0$ by time $T$ and remains at $0$ thereafter.
To do this, roughly speaking, we use the arrival time of the currently oldest packet in the system as a Lyapunov function.
We show that, for any fluid limit, this arrival time progresses at rate strictly greater than $1$, as long as it is less than the current time $t$;
thus it reaches $t$ by a finite time $T$, and then coincides with $t$ thereafter.

The above Lyapunov function and the argument are related to those used in ~\cite{stolyar2003control}. There is a substantial difference though. 
In ~\cite{stolyar2003control} the packet flows of each type follow their own deterministic routes through the network. For the model in this paper this is not the case -- packets with the same origin propagate through the network in a random fashion. As a result, our argument is different from that in ~\cite{stolyar2003control}.

\subsubsection{Fluid Limits}
Define the ``norm'' $||X(t)|| = \sum_{(u, v) \in E} |x_{uv}(t)|$, where $|x_{uv}(t)|$ is the cardinality of $x_{uv}(t)$.
Consider a sequence of processes $(X^n(\cdot))_n$ indexed by some sequence $n \to \infty$ such that the norms of the initial states satisfy $||X^n(0)|| = n$.
To prove Theorem \ref{thm:main-result}, it suffices to show \cite{rybko1992ergodicity, dai1995positive, stolyar1995stability,Bramson-book} that for any such sequence and for some universal time $T > 0$, 
\begin{equation}
	\frac{1}{n}\mathbb{E}[||X^n(nT)||] \to 0 \text{ as } n \to \infty.
	\label{eq:mean-limit}
\end{equation}
Standard arguments \cite{rybko1992ergodicity, dai1995positive, stolyar1995stability,Bramson-book} apply to see that
the family of random variables $\{||X^n(nT)||/n\}$ is uniformly integrable and, therefore,
to prove \eqref{eq:mean-limit}, it suffices to show that $||X^n(nT)||/n \to 0 ~ \mathrm{a.s.}$. To prove the latter
it in turn suffices to show that for any subsequence of $\{n\}$, there exists a further subsequence $\{m\}$ along which
\begin{equation}
	\frac{1}{m} ||X^m(mT)|| \to 0 \quad \mathrm{a.s.}
	\label{eq:norm-cond}
\end{equation}
Toward this goal, consider some subsequence $\{n\}$.

 We will interpret an exogenous arrival of a packet at node $u$ as $N-1$ copies of this packet simultaneously joining 
$N-1$ {\em virtual queues}, one per each link $(u,v)$ with $v \ne u$. A copy joining an $(u,v)$ virtual queue we will call a \emph{$(u,v)$-packet}.
The process state component $x_{uv}(t)$ can be thought of as describing the content of the virtual queue of $(u,v)$-packets that arrived by time $t$
and are still undelivered to $v$ by $t$. 
We note that a $(u,v)$-packet departure from the $(u,v)$ virtual queue is {\em not} necessarily
due to the direct delivery of the corresponding actual packet from node $u$ to node $v$ -- it is possible that the actual packet is delivered to node $v$ by actual transmission on another link $(w,v)$. That's why we call these $(u,v)$-queues virtual. (We also want to emphasize that the virtual queues are not part of the Oldest-Useful discipline definition,
and are not directly used by the algorithm. They just represent our view of the system evolution, which is convenient for analysis.) 

It is easy to observe that Oldest-Useful discipline has the following property. At any time, {\em if there are two packets, $p_1$ and $p_2$, originating (exogenously arrived) at the same node, and $p_2$ is ``younger'' (arrived in the system later) than $p_1$, then $V_2 \subseteq V_1$,} where $V_i$ is the set of nodes where $p_i$ is present.
This, in turn, implies that {\em $(u,v)$-packets depart the $(u,v)$ virtual queue strictly in the order of their age -- oldest first.}

To prove \eqref{eq:norm-cond}, we define the following quantities related to the evolution of the system for all $(u, v) \in E$ and $w \in V$:

\begin{itemize}
	\item $F_{uv}(t)$ is the number of $(u,v)$-packets which have arrived to the system by time $t$.
	\item $\hat{F}_{uv}(t)$ is the number of $(u,v)$-packets, left $(u,v)$-queue by time $t$.
	\item $\hat{F}_{uv, w}(t)$ is the number of $(u,v)$-packets, left $(u,v)$-queue by time $t$ due to their transmission on link $(w, v)$.
	\item $S_{uv}(t)$ is the number of communication epochs on link $(u, v)$ in the interval $[0, t]$.
\end{itemize}

Recall that the definition of our Markov process state is such that only the order of packets from ``oldest'' to ``youngest'' matters, not their actual times of arrival into the system. Therefore, we can and will 
adopt the following convention about the packets present in the system at time $t=0$. (It will 
allow us to define  functions $F_{uv}(t)$ for all real times $t$, which will
be convenient.)
The convention is
that if there are $\ell$ packets in the system at time $t = 0$, they arrived exogenously at non-positive time instants $-\ell + 1, -\ell + 2, \ldots, -1, 0$, ``in the past.''
Then, $F_{uv}(t)$ is defined for $t \geq -\ell+1$ and we use the convention that $F_{uv}(t) = 0$ for $t < -\ell + 1$ for all $(u, v) \in E$.
Functions $\hat{F}_{uv}(t)$ and $\hat{F}_{uv, w}(t)$ are defined for $t \geq 0$  with $\hat{F}_{uv}(0) = \hat{F}_{uv, w}(0) = 0$  for all $(u, v) \in E, w \in V$.
We note that functions $F_{uv}(t)$, $\hat{F}_{uv}(t)$ and $\hat{F}_{uv, w}(t)$ are non-decreasing in $t$, for all $(u, v) \in E, w \in V$.
Functions $S_{uv}(t)$ are defined for $t \geq 0$ and are also non-decreasing, for all $(u, v) \in E$.

The process $(F_{uv}(\cdot), S_{uv}(\cdot))_{(u, v) \in E}$ uniquely determines the process $X(\cdot)$.
Recall that the ``norm'' of $X(t)$ is the total number of undelivered $(u,v)$-packets (for all $(u,v)$) in the system at time $t \geq 0$: 
$$||X(t)|| = \sum_{(u, v) \in E}F_{uv}(t) - \hat{F}_{uv}(t).$$

We use the superscript $n$ for the quantities pertaining specifically to the quantities related to $X^n(\cdot)$.
For any $n \geq 1$, any $(u, v) \in E$, and any $w \in V$, let $$f_{uv}^n(t) := \frac{1}{n}F^n_{uv}(nt),$$ $$\hat{f}^n_{uv}(t) := \frac{1}{n}\hat{F}^n_{uv}(nt),$$ $$\hat{f}^n_{uv, w}(t) := \frac{1}{n}\hat{F}^n_{uv, w}(nt),$$ and $$s^n_{uv}(t) := \frac{1}{n}S_{uv}(nt).$$
Note that for all $n \geq 1$, $(u,v) \in E, w \in V$, $f^n_{uv}(t) = 0$ for $t \leq -1$, $\hat{f}^n_{uv}(0) = \hat{f}^n_{uv, w}(0) = 0$, and $\sum_{(u, v) \in E}f_{uv}^n(0) = 1$.

Denote by $\Xi_{uv}^n(t)$ the arrival time of the oldest undelivered $(u,v)$-packet at time $t$.
If there is no such packet, set $\Xi_{uv}^n(t) = t$.
Define $$\tau_{uv}^n(t) := \frac{1}{n}\Xi^n_{uv}(nt)$$ and $$\tau^n(t) := \min_{(u, v) \in E}\tau_{uv}^n(t).$$
Note that $\tau_{uv}^n(0) \ge -1$, and then $\tau^n(0) \ge -1$.

Let $(\Omega, \mathcal{F}, \mathbb{P})$ be a probability space of independent driving Poisson arrival processes to the nodes and Poisson service processes on the communication links.
We assume that the processes for all $n$ are constructed on this space, using the same set of driving Poisson processes.
We note that the driving processes satisfy the functional strong law of large numbers: almost surely,
\begin{equation}
	f^n_{uv}(t) - f^n_{uv}(0) \to \lambda_u t \text{ and } s^n_{uv}(t) - s^n_{uv}(0) \to t, ~~\text{$t\ge 0$, u.o.c. as } n \to \infty \quad \forall (u, v) \in E.
	\label{eq:FSLLN}
\end{equation}

\begin{lemma}
	The subsequence $\{n\}$ (fixed earlier) has a further subsequence $\{m\}$ such that $\liminf_{m\to\infty}\tau^m(1) \geq 0$ almost surely.\
	\label{lem:tau(1)}
\end{lemma}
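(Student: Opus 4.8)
The plan is to pass to a fluid limit and show that, by scaled time $1$, every node has received every ``initial'' packet (one present in the system at time $0$); this is exactly the assertion that the oldest undelivered packet on each link has nonnegative scaled arrival time. First I would extract the subsequence. By the convention on initial packets, everything present at time $0$ has (scaled) arrival time in $[-1,0]$, so $\tau^n_{uv}(0)\ge -1$ for every link. The scaled counting processes $f^n_{uv},\hat f^n_{uv},\hat f^n_{uv,w},s^n_{uv}$ are nondecreasing with increments over bounded intervals that are uniformly (in $n$) controlled — arrivals on each link at asymptotic rate $\lambda_u$ and communications at rate $1$ by \eqref{eq:FSLLN} — so the family is a.s.\ equi-Lipschitz on compacts. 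By Arzel\`a--Ascoli I extract a further subsequence $\{m\}$ along which all of these converge u.o.c., a.s., to Lipschitz limits, and along which \eqref{eq:FSLLN} gives $f_{uv}(t)=f_{uv}(0)+\lambda_u t$ for $t\ge 0$.

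Next I would reformulate the target in terms of deliveries to nodes. For a node $v$ let $B^m_v(t)$ denote the number of initial packets not yet present at $v$ at time $t$; then $B^m_v(0)=\sum_u F^m_{uv}(0)$, and since $\hat F^m_{uv}(0)=0$ we have $\sum_v B^m_v(0)=||X^m(0)||=m$, whence $\tfrac1m B^m_v(0)\le 1$ for each $v$. The event $\tau^m(1)\ge 0$ is, up to an $o(1)$ correction, precisely the statement that no initial packet remains undelivered at any node at time $m$, i.e.\ $B^m_v(m)=0$ for all $v$. So it suffices to prove $\tfrac1m B^m_v(m)\to 0$ a.s.\ for each fixed $v$.

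The core is a rate-$1$ lower bound on the delivery of initial packets to $v$. Whenever $B^m_v(t)>0$, pick any initial packet missing at $v$; it is present at its source $s$, and $s\ne v$, so (the graph being complete) the link $(s,v)$ exists. That packet is useful on $(s,v)$, and being initial it is older than every exogenous arrival after time $0$; hence the oldest useful packet on $(s,v)$ is itself an initial packet, and the next communication epoch on $(s,v)$ delivers an initial packet to $v$, decrementing $B^m_v$. Thus, as long as $B^m_v>0$, the cumulative number of initial deliveries to $v$ dominates a rate-$1$ Poisson process, and a coupling gives $B^m_v(t)\le\big(B^m_v(0)-\Pi(t)\big)^+$ for a rate-$1$ Poisson process $\Pi$. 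Applying the strong law $\Pi(mt)/m\to t$ along $\{m\}$ yields $\tfrac1m B^m_v(m)\le\big(\tfrac1m B^m_v(0)-1\big)^+\!+o(1)\to 0$, since $\tfrac1m B^m_v(0)\le1$. Therefore every node receives all initial packets by scaled time $1$, which forces $\liminf_m\tau^m(1)\ge 0$.

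The main obstacle is the last step's tightness. Because $\tfrac1m B^m_v(0)$ can equal $1$ (all initial mass missing at a single node, as already happens for $N=2$), the rate-$1$ drain is exactly critical, so one cannot hope for $\tau^m(1)\ge0$ on the nose — only the \emph{liminf} survives, and the $o(1)$ residual must be absorbed by the fluid scaling rather than discarded. Care is also needed in turning the ``there is always an active rate-$1$ link'' statement into a clean stochastic domination, since the identity of the delivering link $(s,v)$ changes over time as sources clear; the cleanest route is to bound below the total initial-delivery counting process into $v$ rather than track a single fixed link.
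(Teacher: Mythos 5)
Your counting argument is correct as far as it goes, but the reduction in your second paragraph is a genuine gap: $\frac{1}{m}B^m_v(m)\to 0$ does \emph{not} imply $\liminf_m \tau^m(1)\ge 0$. The quantity $\tau^m(1)$ is determined by the \emph{age} of the oldest surviving packet, not by how many packets survive: a single surviving initial packet with scaled arrival time near $-1$ forces $\tau^m(1)$ near $-1$. Your drain bound cannot rule this out, because it only guarantees that \emph{some} initial packet is delivered at each relevant epoch; it is equally consistent with youngest-first clearing, under which the $O(\sqrt m)$ residual left by the exactly critical rate-$1$ drain at time $m$ (present with probability bounded away from zero, e.g.\ in your own $N=2$ example with $B^m_v(0)=m$) would consist of precisely the \emph{oldest} packets, making $\tau^m(1)=-1$ infinitely often along any subsequence. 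The ``$o(1)$ correction'' you invoke does not exist at the level of counts: $\tau^m(1)>-\eps$ is equivalent to the statement that no packet with scaled arrival time $\le-\eps$ survives, which is a statement about \emph{which} packets remain, not how many.

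The missing ingredient --- and the heart of the paper's proof --- is a second use of the age ordering built into Oldest-Useful. As long as any initial packet remains undelivered, the globally oldest undelivered packet $p$ is missing at some node $v$ and present at some $w\ne v$ (at least at its source), and the next communication epoch of link $(w,v)$ transmits the oldest useful packet there, which is exactly a copy of $p$ (any older packet missing at $v$ would contradict $p$ being globally oldest undelivered). Hence undelivered initial copies are removed at rate at least $1$ \emph{with the oldest going first}. Quantitatively: if there are any packets with scaled arrival time $\le -\eps$, then the packets with scaled arrival time in $(-\eps,0]$ number at least $\eps m$ and each retains at least one undelivered copy, so copies of ``old'' packets number at most $(1-\eps)m$; clearing all of them takes at most a sum of $(1-\eps)m$ i.i.d.\ mean-$1$ exponentials, which is less than $m$ with probability tending to $1$. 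This gives $\pr\{\tau^m(1)>-\eps\}\to 1$, after which a sufficiently fast subsequence, Borel--Cantelli, and $\eps\downarrow 0$ yield the lemma --- these are the ``$\eps/\delta$ formalities'' the paper omits. (Two side remarks: your opening Arzel\`a--Ascoli extraction is not needed for this lemma, as it belongs to the later fluid-limit compactness proposition; and your per-destination bookkeeping $B^m_v$ is perfectly serviceable, provided the domination tracks the oldest packet missing at $v$ rather than merely the count of missing packets.)
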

\begin{proof}
Let us refer to $(u,v)$-packets present in the system at (``initial'') time $t=0$ as initial $(u,v)$-packets. 
By the nature of Oldest-Useful discipline, if there is at least one initial packet in any of the virtual queues at time $t$, then 
for any process state at $t$, the time until the next initial packet is delivered (and removed from its virtual queue) is stochastically upper bounded by an exponential random variable with mean $1$. Therefore, the time $\Delta^n$ at which the network delivers all initial packets is stochastically upper bounded by the sum of $n$  i.i.d. exponential mean-1 random variables, where recall $n$ is the total number of initial packets. For the rescaled processes, we see that, for any $\varepsilon>0$, 
$$
\pr\{\tau^n(1) > -\varepsilon\} \to 1, ~~ n\to\infty.
$$
By choosing the further subsequence $\{m\}$ to grow sufficiently fast, we obtain the lemma statement. We omit $\varepsilon/\delta$
formalities.
\end{proof}

From this point, we fix a further subsequence along which $\liminf_{n\to\infty} \tau^n(1) \geq 0$ a.s., as in Lemma \ref{lem:tau(1)}.

\begin{definition}
	A collection of functions $$(f_{uv}(\cdot), \hat{f}_{uv}(\cdot), \hat{f}_{uv, w}(\cdot), s_{uv}(\cdot), \tau_{uv}(\cdot), \tau(\cdot))_{(u, v) \in E, w \in V}$$ is called a \emph{fluid limit} if there exists a sequence of process sample paths $$(f^m_{uv}(\cdot), \hat{f}^m_{uv}(\cdot), \hat{f}^m_{uv, w}(\cdot), s^m_{uv}(\cdot), \tau^m_{uv}(\cdot), \tau^m(\cdot))_{m \in \mathbb{N}, (u, v) \in E, w \in V}$$ such that:
	\begin{enumerate}
		\item Property \eqref{eq:FSLLN} holds.
		\item All of the functions $f_{uv}, \hat{f}_{uv}, \hat{f}_{uv, w}, s_{uv}$ are Lipschitz non-decreasing, and the functions $\tau_{uv}, \tau$ are non-decreasing, for all $(u, v) \in E, w \in V$.
		\item The following convergences hold for all $(u, v) \in E, w \in V$, uniformly on the compact subsets:
			\begin{align}
				& \lim_{m\to\infty} f^m_{uv}(t) \to f_{uv}(t)
				\\
				& \lim_{m\to\infty} \hat{f}^m_{uv}(t) \to \hat{f}_{uv}(t)
				\\
				& \lim_{m\to\infty} \hat{f}^m_{uv, w}(t) \to \hat{f}_{uv, w}(t)
				\\
				& \lim_{m\to\infty} s^m_{uv}(t) \to s_{uv}(t)
			\end{align}
		
		The following convergences hold at all points $t \geq 0$ of continuity of $\tau_{uv}, \tau$ for all $(u, v) \in E$:
			\begin{align}
				& \lim_{m\to\infty} \tau^m_{uv}(t) \to \tau_{uv}(t)
				\\
				& \lim_{m\to\infty} \tau^m(t) \to \tau(t)	
			\end{align}
		\item $\tau(1) \geq 0$.
	\end{enumerate}
	\label{def:fluid-limit}
\end{definition}

\begin{proposition}
	Almost surely, every subsequence of the subsequence fixed above has a further subsequence $\{m\}$ such that the convergence of $$(f^m_{uv}(\cdot), \hat{f}^m_{uv}(\cdot), \hat{f}^m_{uv, w}(\cdot), s^m_{uv}(\cdot), \tau^m_{uv}(\cdot), \tau^m(\cdot))_{(u, v) \in E, w \in V}$$ to a fluid limit $$(f_{uv}(\cdot), \hat{f}_{uv}(\cdot), \hat{f}_{uv, w}(\cdot), s_{ij}(\cdot), \tau_{uv}(\cdot), \tau(\cdot))_{(u, v) \in E, w \in V}$$ holds in the sense specified in Definition \ref{def:fluid-limit}.
\end{proposition}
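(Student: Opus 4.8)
The plan is to prove this purely as a pathwise pre-compactness statement: on the almost-sure event where the functional strong law of large numbers \eqref{eq:FSLLN} holds (and on which $\liminf_m \tau^m(1)\ge 0$, as arranged after Lemma~\ref{lem:tau(1)}), I would extract convergent further subsequences by combining the Arzel\`a--Ascoli theorem for the Lipschitz families with Helly's selection theorem for the monotone families, and then diagonalize over the finite index set $\{(u,v)\in E,\ w\in V\}$. Note that Definition~\ref{def:fluid-limit} imposes no fluid dynamic equations on the limit --- only equicontinuity/monotonicity, uniform-on-compacts (resp.\ pointwise-at-continuity-points) convergence, and the boundary inequality $\tau(1)\ge 0$ --- so the entire content is the existence of a subsequential limit with these regularity properties.

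First I would establish equicontinuity and local boundedness of the four families $f^m_{uv},\hat f^m_{uv},\hat f^m_{uv,w},s^m_{uv}$. The family $s^m_{uv}$ needs no work: by \eqref{eq:FSLLN} it already converges u.o.c.\ to the Lipschitz function $t\mapsto t$ (recall $s^m_{uv}(0)=0$). For the departure counts the key structural observation is that every departure of a $(u,v)$-packet caused by a transmission on link $(w,v)$ consumes one communication epoch of that link, so $\hat F^m_{uv,w}(t_2)-\hat F^m_{uv,w}(t_1)\le S^m_{wv}(t_2)-S^m_{wv}(t_1)$; after rescaling, this dominates the increments of $\hat f^m_{uv,w}$ by those of the u.o.c.-convergent (hence equicontinuous) family $s^m_{wv}$, giving equicontinuity of $\hat f^m_{uv,w}$, and then of $\hat f^m_{uv}=\sum_{w:(w,v)\in E}\hat f^m_{uv,w}$ as a finite sum. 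For the arrival counts $f^m_{uv}$, equicontinuity on $[0,\infty)$ follows from \eqref{eq:FSLLN} (increments converge to $\lambda_u\times$ length), while on the initial segment $[-1,0]$ it follows from the convention that the initial packets carry arrival epochs spaced one unit apart, so that any rescaled window of length $\delta$ contains at most $n\delta+1$ of them; local boundedness holds because $\sum_{(u,v)}f^m_{uv}(0)=1$. An application of Arzel\`a--Ascoli then yields a subsequence along which all four families converge u.o.c.\ to Lipschitz limits, and monotonicity of the limits is inherited from the prelimit functions.

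Next I would treat the order statistics $\tau^m_{uv}$ and $\tau^m$. Each $\tau^m_{uv}$ is nondecreasing in $t$ (as old $(u,v)$-packets depart, the oldest remaining one only gets younger, and the no-packet convention $\Xi^m_{uv}(nt)=nt$ preserves monotonicity) and satisfies $-1\le \tau^m_{uv}(t)\le t$, so it is uniformly bounded on compacts; Helly's selection theorem supplies a further subsequence converging at every continuity point of a nondecreasing limit $\tau_{uv}$, and since $E$ is finite the same subsequence makes $\tau^m=\min_{(u,v)}\tau^m_{uv}$ converge, at common continuity points, to $\min_{(u,v)}\tau_{uv}$. Passing to a single diagonal subsequence $\{m\}$ over the finite family of all indices gives simultaneous convergence of everything, verifying items 1--3 of Definition~\ref{def:fluid-limit}.

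The last and most delicate point is the boundary inequality $\tau(1)\ge 0$ (item 4), which is the only place the arrangement of Lemma~\ref{lem:tau(1)} enters and where a discontinuity of the monotone limit could cause trouble. For $t\ge 1$, monotonicity gives $\tau^m(t)\ge \tau^m(1)$, hence $\liminf_m\tau^m(t)\ge \liminf_m\tau^m(1)\ge 0$; evaluating at continuity points $t\downarrow 1$ yields $\tau(1^+)\ge 0$, so adopting the (standard) convention that the Helly limit is taken right-continuous gives $\tau(1)=\tau(1^+)\ge 0$. I expect this $\tau$-handling --- monotone selection plus the careful treatment of the boundary value at a possible jump --- to be the main obstacle; the equicontinuity estimates, by contrast, are routine once the domination $\hat F^m_{uv,w}\le S^m_{wv}$ is noted.
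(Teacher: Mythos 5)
Your proposal is correct and is essentially the paper's own argument: the paper proves this proposition by simply invoking the functional strong law of large numbers \eqref{eq:FSLLN} together with the ``standard arguments'' of the fluid-limit literature \cite{rybko1992ergodicity, dai1995positive, stolyar1995stability, rybko2002stability}, and your Arzel\`a--Ascoli/Helly/diagonalization scheme --- including the domination $\hat F^m_{uv,w}\le S^m_{wv}$ for equicontinuity and the monotonicity argument securing $\tau(1)\ge 0$ at a possible jump --- is precisely what those standard arguments amount to here. You have merely written out in full what the paper cites by reference.
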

\begin{proof}
	The proof uses the functional strong law of large numbers \eqref{eq:FSLLN} and standard arguments, see, \textit{e.g.}~\cite{rybko1992ergodicity, dai1995positive, stolyar1995stability, rybko2002stability}.
	\end{proof}

\begin{proposition}
	A fluid limit satisfies the following properties for all $(u, v) \in E, w \in V$,  $t \geq 0$, $0 \le t_1 \le t_2$:
	\begin{align}
		& f_{uv}(t) - f_{uv}(0) = \lambda_u t
		\label{eq:FL-P-1}
		\\
		& s_{uv}(t) - s_{uv}(0) = t
		\label{eq:FL-P-2}
		\\
		& \hat{f}_{uv, w}(t_2) - \hat{f}_{uv, w}(t_1) \leq (t_2 - t_1)
		\label{eq:FL-P-4}
		\\
		& \hat{f}_{uv}(t_2) - \hat{f}_{uv}(t_1) = \sum_{w \in V} \hat{f}_{uv, w}(t_2) - \hat{f}_{uv, w}(t_1)
		\label{eq:FL-P-5}
		\\
		& \tau(t) = \min_{(u, v) \in E}\tau_{uv}(t) 
		\label{eq:FL-P-6}
		\\
		& \tau(t) \leq \inf_{\substack{s \leq t \\ (u, v) \in E}}\{s: f_{uv}(s) - \hat{f}_{uv}(t) > 0\} 
		\label{eq:FL-P-7}
	\end{align}
	\label{prop:FL-properties}
\end{proposition}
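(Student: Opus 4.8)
The plan is to verify the six properties essentially one at a time, transferring an exact identity or inequality from the prelimit processes $(F^m_{uv},\hat F^m_{uv},\hat F^m_{uv,w},S^m_{uv})$ and then passing to the fluid limit using the uniform-on-compacts convergence built into Definition~\ref{def:fluid-limit}. Properties \eqref{eq:FL-P-1} and \eqref{eq:FL-P-2} are immediate: they are nothing but the functional strong law of large numbers \eqref{eq:FSLLN}, rewritten using $f_{uv}(0)=\lim_m f^m_{uv}(0)$ and $s_{uv}(0)=0$.

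For the departure bound \eqref{eq:FL-P-4} I would argue at the prelimit level that every $(u,v)$-packet counted by $\hat F^m_{uv,w}$ leaves its virtual queue at a communication epoch of link $(w,v)$, and each such epoch removes at most one $(u,v)$-packet; hence $\hat F^m_{uv,w}(mt_2)-\hat F^m_{uv,w}(mt_1)\le S^m_{wv}(mt_2)-S^m_{wv}(mt_1)$. Dividing by $m$, passing to the limit, and using \eqref{eq:FL-P-2} for link $(w,v)$ gives the Lipschitz-$1$ bound. Property \eqref{eq:FL-P-5} is simpler still: at the prelimit level a $(u,v)$-packet leaves the $(u,v)$ virtual queue due to exactly one transmission, on some link $(w,v)$, so $\hat F^m_{uv}(\cdot)=\sum_{w}\hat F^m_{uv,w}(\cdot)$ identically, and this finite-sum identity survives the limit. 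Property \eqref{eq:FL-P-6} follows from the prelimit identity $\tau^m(t)=\min_{(u,v)}\tau^m_{uv}(t)$: at any $t$ that is a common continuity point of the finitely many limits $\tau_{uv}$ and of $\tau$, the convergences in Definition~\ref{def:fluid-limit} give $\tau(t)=\min_{(u,v)}\tau_{uv}(t)$, and monotonicity of both sides extends this to all $t$.

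The crux is \eqref{eq:FL-P-7}, which ties the ``oldest-packet'' Lyapunov function to the arrival and departure curves, and is the step I expect to be the main obstacle. The key structural input is the oldest-first departure property recorded above: the $\hat F^m_{uv}(\cdot)$ packets that have left the $(u,v)$ virtual queue by any time are exactly its oldest ones. I would use it through the following one-sided implication. Fix $s\le t$ with $f_{uv}(s)-\hat f_{uv}(t)>0$. By the \emph{strict} inequality together with $f^m_{uv}(s)\to f_{uv}(s)$ and $\hat f^m_{uv}(t)\to\hat f_{uv}(t)$, we have $F^m_{uv}(ms)>\hat F^m_{uv}(mt)$ for all large $m$; since $s\le t$ and departures are oldest-first, more packets have arrived by time $ms$ than have departed by time $mt$, so at least one packet arriving by $ms$ is still undelivered at $mt$, forcing $\Xi^m_{uv}(mt)\le ms$, i.e.\ $\tau^m_{uv}(t)\le s$. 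Letting $m\to\infty$ along continuity points gives $\tau_{uv}(t)\le s$; taking the infimum over all admissible $s$ and the minimum over $(u,v)$ via \eqref{eq:FL-P-6} yields \eqref{eq:FL-P-7}.

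The points that require the most care, and that explain why \eqref{eq:FL-P-7} is an inequality rather than an equality, are the one-sidedness of passing a limit through the generalized inverse defining $\Xi$, and the boundary case in which the virtual queue is fluid-empty, $f_{uv}(t)=\hat f_{uv}(t)$. In that case no admissible $s\le t$ exists, the infimum on the right of \eqref{eq:FL-P-7} is over the empty set, and the bound holds trivially since $\tau(t)\le t$ by the convention $\Xi^m_{uv}(mt)=mt$ for an empty queue. The remaining bookkeeping is to track which identities hold for every $t$ versus only at continuity points of the monotone limits $\tau_{uv},\tau$, upgrading the dense-set equalities to all $t$ by monotonicity.
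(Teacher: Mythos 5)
Your proposal is correct and takes essentially the same approach as the paper, whose proof likewise obtains \eqref{eq:FL-P-1}--\eqref{eq:FL-P-2} directly from the definition of a fluid limit (via \eqref{eq:FSLLN}), \eqref{eq:FL-P-4} from the strong law of large numbers, and \eqref{eq:FL-P-5}--\eqref{eq:FL-P-7} by transferring the analogous properties from the pre-limit trajectories; your write-up merely fills in the details the paper leaves implicit. One small observation: your counting argument for \eqref{eq:FL-P-7} ($F^m_{uv}(ms)>\hat F^m_{uv}(mt)$ forces an undelivered packet that arrived by $ms$) does not actually need the oldest-first departure property, and your closing caveats about one-sided limit passage and continuity points of the monotone functions $\tau_{uv},\tau$ are exactly the bookkeeping the paper suppresses.
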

\begin{proof}
	Properties \eqref{eq:FL-P-1} and \eqref{eq:FL-P-2} are part of the definition of a fluid limit.
	Property \eqref{eq:FL-P-4} follow from the strong law of large numbers.
	Properties \eqref{eq:FL-P-5}, \eqref{eq:FL-P-6} and \eqref{eq:FL-P-7} follow from the analogous properties for the pre-limit trajectories.
\end{proof}

For $(u, v) \in E, w \in V$, denote by $\mu_{uv, w}(t) := \frac{\mathrm{d}}{\mathrm{d}t} \hat{f}_{uv, w}(t)$, whenever this derivative exists.
The quantity $\mu_{uv, w}(t)$ can be thought of as the ``instantaneous service rate for $(u,v)$-fluid on link $(w, v)$'' at time $t$.
Obviously, $\sum_{u \in V}\mu_{uv, w}(t) \leq 1$ at any time $t \geq 0$ such that the requisite derivatives exist.

\begin{definition}
	A time $t\ge 0$ is \emph{regular} if the derivatives with respect to time $\tau'(t), \tau'_{uv}(t), \mu_{uv, w}(t)$ exist for all $(u, v) \in E$ and $w \in V$.	
\end{definition}

Since $\tau(t), \tau_{uv}(t), \hat{f}_{uv, w}(t)$ are non-decreasing in $t$ for all $(u, v) \in E, w \in V$, it follows 
that almost all times $t \geq 0$ are regular.

\subsubsection{Stability of Fluid Limits}

\begin{definition}
	Consider a fluid limit.
	We say that $(u, v) \in E$ is a \emph{critical link at time $t \geq 0$} if $\tau_{uv}(t) = \tau(t)$. 
	We denote by $L(t) \subseteq E$ the subset of critical links at time $t$.
	\label{def:critical-link}
\end{definition}

\begin{definition}
	Consider a fluid limit.
	Suppose that $t \geq 0$ is a regular time.
	We say that the link $(w, v)$ \emph{does not serve $(u,v)$-fluid at time $t$} if $\mu_{uv, w}(t) = 0$.	
	\label{def:does-not-serve-ij-fluid}
\end{definition}

In the proofs of the following Lemmas \ref{lem:tau-deriv-estimate} and \ref{lem:critical-links-busy}, along with a given fluid limit, we consider a fixed defining sequence of pre-limit sample paths $\{m\}$ converging to the fluid limit as specified in Definition \ref{def:fluid-limit}.

\begin{lemma}
	Consider a fluid limit.
	Let $t \geq 1$ be a regular time and let $W \subseteq V$.
	Then $$\tau'_{uv}(t) \geq \frac{1}{\lambda_u}\sum_{w \in W}\mu_{uv, w}(t).$$
	\label{lem:tau-deriv-estimate}
\end{lemma}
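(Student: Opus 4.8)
The plan is to reduce the claim to the first-in-first-out (FIFO) nature of the virtual queues, which was established before the lemma (``$(u,v)$-packets depart the $(u,v)$ virtual queue strictly in the order of their age -- oldest first''). This FIFO property yields an exact identity relating the $(u,v)$-departure process to the $(u,v)$-arrival process evaluated at the age of the oldest undelivered packet; everything then follows by differentiation. I will treat the case $\lambda_u>0$, since if $\lambda_u=0$ there are eventually no $(u,v)$-packets, both sides of the asserted inequality vanish, and the bound holds trivially.

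First I would establish the fluid-level identity
\[
\hat{f}_{uv}(t) = f_{uv}(\tau_{uv}(t)), \qquad t \geq 0 .
\]
In the pre-limit, the FIFO (oldest-first) departure order means that at time $nt$ the set of $(u,v)$-packets that have already left the queue is exactly the set of those whose arrival epoch is strictly earlier than the arrival epoch $\Xi^n_{uv}(nt)$ of the current oldest undelivered packet. Hence $\hat{F}^n_{uv}(nt) = F^n_{uv}(\Xi^n_{uv}(nt)) - O(1)$, where the $O(1)$ counts the single boundary packet sitting at the head of the queue. Dividing by $n$, the boundary term is negligible, and passing to the fluid limit along the fixed subsequence---using the uniform-on-compacts convergence of $\hat f^m_{uv}$ and $f^m_{uv}$ from Definition~\ref{def:fluid-limit} and the Lipschitz continuity of $f_{uv}$---gives the displayed identity. (When the queue is empty we have $\tau_{uv}(t)=t$ and both sides equal $f_{uv}(t)$, so the identity persists in that case too.)

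Next I would differentiate. Fix a regular time $t\geq 1$. Since $\tau$ is non-decreasing and $\tau(1)\geq 0$ by Definition~\ref{def:fluid-limit}(4), we get $\tau_{uv}(t)\geq \tau(t)\geq \tau(1)\geq 0$; this is precisely where the hypothesis $t\geq 1$ is used. On $[0,\infty)$ property \eqref{eq:FL-P-1} gives $f_{uv}(s)=f_{uv}(0)+\lambda_u s$, so $f_{uv}$ is differentiable at $\tau_{uv}(t)$ with derivative $\lambda_u$. Because $t$ is regular, $\tau_{uv}'(t)$ and all $\mu_{uv,w}(t)$ exist, so the chain rule applied to the identity, together with \eqref{eq:FL-P-5}, yields
\[
\sum_{w\in V}\mu_{uv,w}(t) \;=\; \hat{f}_{uv}'(t) \;=\; \lambda_u\,\tau_{uv}'(t).
\]
Dividing by $\lambda_u$ and then discarding the non-negative terms indexed by $V\setminus W$ (recall each $\mu_{uv,w}(t)\geq 0$ and $W\subseteq V$) gives
\[
\tau_{uv}'(t) \;=\; \frac{1}{\lambda_u}\sum_{w\in V}\mu_{uv,w}(t) \;\geq\; \frac{1}{\lambda_u}\sum_{w\in W}\mu_{uv,w}(t),
\]
which is the claimed bound; note that for $W=V$ this is in fact an equality.

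The main obstacle is the rigorous justification of the fluid-level FIFO identity rather than the differentiation, which is routine. The delicate points are: (i) controlling the single boundary packet so that it truly vanishes after the $1/n$ scaling; and (ii) reconciling the continuity of $\hat f_{uv}$ with possible jumps of the limit $\tau_{uv}$, since the convergence $\tau^m_{uv}\to\tau_{uv}$ is only asserted at continuity points of $\tau_{uv}$. The latter is not actually an issue once $\lambda_u>0$: the identity exhibits $\hat f_{uv}=f_{uv}\circ\tau_{uv}$ with $f_{uv}$ strictly increasing and continuous, forcing $\tau_{uv}=f_{uv}^{-1}\circ\hat f_{uv}$ to be continuous, so the a.e.-convergence of $\tau^m_{uv}$ suffices to transfer the pre-limit FIFO relation to the limit. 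I would spell out this boundary-term/continuity bookkeeping carefully and then invoke the chain rule at regular times as above.
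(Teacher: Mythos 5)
Your proof is correct and takes essentially the same route as the paper's: both arguments rest on the oldest-first (FIFO) departure property of the $(u,v)$ virtual queues, the linearity $f_{uv}(s)=f_{uv}(0)+\lambda_u s$ for $s\ge 0$ (with $t\ge 1$ guaranteeing $\tau_{uv}(t)\ge\tau(1)\ge 0$), and differentiation at a regular time. The only cosmetic difference is that the paper works with a pre-limit increment inequality already restricted to $W$, whereas you pass through the exact fluid identity $\hat f_{uv}=f_{uv}\circ\tau_{uv}$, apply the chain rule to get equality for $W=V$, and then discard the non-negative terms outside $W$.
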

\begin{proof}
	Recall that we index the defining sequence by $m$.	Also recall that, under Oldest-Useful discipline,
	if a link $(w,v)$ serves a $(u,v)$-packet, it is the oldest $(u,v)$-packet. 
	Then, for a small enough $\delta>0$, and all sufficiently large $m$, we must have
	$$
	f_{uv}^m(\tau_{uv}^m(t)+\delta) - f_{uv}^m(\tau_{uv}^m(t)) \ge
	\sum_{w \in W} [\hat f_{uv,w}^m(t+\delta) - \hat f_{uv,w}^m(t)]. 
	$$
	Taking the limit in $m$ and then sing the fact that $t$ is regular, we obtain the result.
\end{proof}

\begin{lemma}
	Consider a fluid limit.
	Let $t \geq 1$ be a regular time such that $\tau(t) < t$.
	Suppose that $(u, v) \in L(t)$.
	Then link $(u, v)$ does not serve any $(w,v)$-fluid such that $\tau_{wv}(t) > \tau(t)$, 
	and $\sum_{(w, v) \in L(t)}\mu_{wv, u}(t) = 1$.
	\label{lem:critical-links-busy}
\end{lemma}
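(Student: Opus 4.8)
The plan is to argue entirely on the fixed defining prelimit sequence $\{m\}$, in the same spirit as the proof of Lemma~\ref{lem:tau-deriv-estimate}, and to read off both assertions from the Oldest-Useful transmission rule together with the fact that a critical virtual queue carries genuinely old fluid. First I would prove the non-service assertion. Since $(u,v)\in L(t)$ and $\tau(t)<t$ we have $\tau_{uv}(t)=\tau(t)<t$; in particular $\lambda_u>0$, for otherwise no $(u,v)$-packet ever exists and $(u,v)$ could not be critical. For large $m$ the oldest undelivered $(u,v)$-packet at time $mt$ has rescaled arrival time close to $\tau(t)$; having source $u$ it sits at $u$ and, being undelivered, is absent from $v$, so it is useful on the physical link $(u,v)$. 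By the Oldest-Useful rule, any packet transmitted on link $(u,v)$ at an epoch in a short window $[mt,m(t+\delta)]$ is at least as old as the oldest $(u,v)$-packet present at that epoch; using continuity of $\tau_{uv}$ at the regular time $t$ (where $\tau_{uv}(t)=\tau(t)$) together with monotonicity, its rescaled arrival time is at most $\tau(t)+\epsilon$ once $\delta$ is small.

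If such a transmitted packet is a $(w,v)$-packet, then it is undelivered, so by monotonicity of $\tau_{wv}$ its rescaled arrival time is at least $\tau_{wv}(t)$, whence $\tau_{wv}(t)\le\tau(t)+\epsilon$. Consequently, if $\tau_{wv}(t)>\tau(t)$, then choosing $\epsilon<\tfrac12(\tau_{wv}(t)-\tau(t))$ and the corresponding $\delta$ forces link $(u,v)$ to transmit no $(w,v)$-packet on $[mt,m(t+\delta)]$ for all large $m$. Hence $\hat f_{wv,u}$ is constant near $t$ and $\mu_{wv,u}(t)=0$, which is the first assertion. (Note $w=u$ is automatically excluded since then $\tau_{uv}(t)=\tau(t)$, so $(u,v)$ is critical.)

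Next I would prove full utilization. Because $(u,v)$-packets leave the $(u,v)$ virtual queue oldest-first (the FIFO property recorded just before the definitions of $F_{uv},\hat F_{uv}$), the departed $(u,v)$-fluid coincides with the oldest arrivals, so $\hat f_{uv}(t)=f_{uv}(\tau_{uv}(t))$ and the undelivered content equals $f_{uv}(t)-f_{uv}(\tau(t))=\lambda_u\,(t-\tau(t))>0$ by \eqref{eq:FL-P-1}. Lipschitz continuity of $f_{uv}$ and $\hat f_{uv}$ keeps this content bounded away from $0$ on a neighborhood of $t$, so in the prelimit the $(u,v)$ virtual queue holds $\Theta(m)$ packets throughout $[mt,m(t+\delta)]$ for $\delta$ small; thus the physical link $(u,v)$ always has a useful packet there and is never idle, so the number of useful transmissions on $(u,v)$ agrees with the number of communication epochs up to $o(m)$. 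Dividing by $m$ and differentiating gives $\sum_{w}\mu_{wv,u}(t)=s'_{uv}(t)=1$ by \eqref{eq:FL-P-2}. By the first assertion the summands over non-critical $w$ vanish, so the sum is unchanged when restricted to critical links, yielding $\sum_{(w,v)\in L(t)}\mu_{wv,u}(t)=1$.

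The main obstacle is the prelimit-to-fluid bookkeeping rather than any conceptual difficulty: one must verify that the $\epsilon$--$\delta$ comparisons of rescaled arrival times hold uniformly for all large $m$, exploiting that $t$ is a regular (hence continuity) point of $\tau_{uv}$ and that $\tau_{uv},\tau_{wv}$ are monotone, and one must make the step ``$\Theta(m)$ backlog implies no idling on a short window'' quantitative (the backlog drains at a bounded rate while arrivals continue, so $\delta$ can be chosen small enough to preclude emptying). These are routine given the regularity of $t$, so the real work is careful accounting, exactly as in Lemma~\ref{lem:tau-deriv-estimate}.
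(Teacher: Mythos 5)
Your first assertion is argued essentially as in the paper: for large $m$ one has $\tau^m_{wv}(t) > \tau^m_{uv}(t)$, and on a short rescaled window after $t$ the Oldest-Useful rule forbids link $(u,v)$ from delivering $(w,v)$-packets because an older useful $(u,v)$-packet is always present; your $\epsilon$--$\delta$ bookkeeping using continuity of $\tau_{uv}$ at the regular time $t$ is the right way to make this uniform in $m$. The gap is in your full-utilization step. You assert $\lambda_u>0$ ``for otherwise no $(u,v)$-packet ever exists and $(u,v)$ could not be critical,'' and this is false: the initial state may contain packets whose source is $u$ (under the paper's convention they arrive at rescaled times in $[-1,0]$, so $f_{uv}(0)>0$ is possible even when $\lambda_u=0$), and at this stage of the development nothing rules out a fluid limit with $\lambda_u=0$ and $\tau_{uv}(t)=\tau(t)=0<t$ for some $t\ge 1$ --- excluding that scenario would require the drift estimate of Lemma \ref{lem:tau(T)}, which is downstream of the present lemma, so invoking it here would be circular. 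In that corner case your computation of the undelivered fluid content, $f_{uv}(t)-f_{uv}(\tau(t))=\lambda_u\,(t-\tau(t))$, gives zero, the ``$\Theta(m)$ backlog'' claim collapses, and your non-idling conclusion is unsupported.

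The repair is the paper's own, weaker argument, which never needs positive fluid mass: since $\tau_{uv}(t)=\tau(t)<t$, monotonicity of $\tau^m_{uv}$ together with convergence at a continuity point $t+\delta$ yields $\tau^m_{uv}(s)\le \tau^m_{uv}(t+\delta)<s$ for all $s\in(t,t+\delta)$ and all large $m$; by the convention that $\Xi^m_{uv}(s)=s$ exactly when the $(u,v)$ virtual queue is empty, this says the queue is nonempty (it holds at least one packet, possibly only $o(m)$ many) at every epoch of $s^m_{uv}$ in the window, and the oldest undelivered $(u,v)$-packet, sitting at its source $u$ and absent from $v$, is useful on the physical link, so a packet is transmitted at every epoch. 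The rest of your accounting --- each transmission decrements exactly one virtual queue $(w,v)$, hence $\sum_{w}\mu_{wv,u}(t)=s'_{uv}(t)=1$ by \eqref{eq:FL-P-2}, and the first assertion annihilates the non-critical summands --- is correct and is exactly how the paper concludes.
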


\begin{proof}
	If $\tau_{wv}(t) > \tau(t)$, then for large enough $m$, $\tau^m_{wv}(t) > \tau^m_{uv}(t)$ and for any sufficiently small re-scaled interval $(t, t + \delta)$, link $(u, v)$ does not deliver $(w,v)$-packets, because it has older
	$(u, v)$-packets available for transmission.
	Thus, in the fluid limit, link $(u, v)$ does not serve $(w,v)$-fluid at time $t$.

	Since $\tau_{uv}(t) = \tau(t) < t$, then for sufficiently large $m$, there are always undelivered $(u,v)$-packets at each epoch of $s_{uv}^m(\cdot)$ during a small enough interval $(t, t + \varepsilon)$.
	Thus, a packet is transmitted from $u$ to $v$ each epoch of $s_{uv}^m(\cdot)$ during the interval $(t, t + \varepsilon)$ and the equality $\sum_{(w, v) \in L(t)}\mu_{wv, u}(t) = 1$ follows. 
\end{proof}

\begin{lemma}
	There exists $T > 0$ such that for any fluid limit, $\tau(t) = t$ for all $t \geq T$.
	\label{lem:tau(T)}
\end{lemma}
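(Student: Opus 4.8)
The plan is to show that the gap to the diagonal, $g(t) := t - \tau(t) \ge 0$, has a strictly negative drift bounded away from $0$ whenever it is positive, so that it is driven to $0$ in a universal finite time and then stays there. Write $\lambda_{\max} := \max_{u} \lambda_u < 1$ and $\beta := 1/\lambda_{\max} > 1$. The heart of the matter is the estimate that at every regular time $t \ge 1$ with $\tau(t) < t$ one has $\tau'(t) \ge \beta$. Granting this, since $\tau$ is non-decreasing and Lipschitz (its pre-limit slope is bounded by the number of links into a node over the arrival rate), it is absolutely continuous, and together with $\tau(1) \ge 0$ from Definition~\ref{def:fluid-limit} a routine Gr\"onwall-type argument applied to $g$ yields $g \equiv 0$ on $[T,\infty)$ with $T := 1/(1-\lambda_{\max})$, the same for every fluid limit.

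First I would record a derivative identity for the minimum. For any critical link $(u,v) \in L(t)$ the function $g_{uv} := \tau_{uv} - \tau$ is non-negative, vanishes at $t$, and is differentiable at the regular time $t$; hence $t$ is an interior global minimizer of $g_{uv}$, so $g_{uv}'(t) = 0$, i.e.\ $\tau'_{uv}(t) = \tau'(t)$ for \emph{every} $(u,v)\in L(t)$. Consequently it suffices to exhibit a single critical link $(w^\ast, v)$ with a large derivative $\tau'_{w^\ast v}(t)$, and the identity propagates the bound to $\tau'(t)$.

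The main step is a flow/counting argument at a fixed destination. Fix a regular $t \ge 1$ with $\tau(t) < t$; then $L(t) \ne \varnothing$, so pick any critical link and let $v$ be its destination. Write $k_v := |\{w : (w,v) \in L(t)\}|$ for the number of critical virtual queues with destination $v$, which is \emph{also} the number of critical links entering $v$. Using \eqref{eq:FL-P-5} to write $\hat f'_{wv}(t) = \sum_{u}\mu_{wv,u}(t)$ and interchanging the order of summation,
\[
R_v := \sum_{w:(w,v)\in L(t)} \hat f'_{wv}(t) = \sum_{u}\Big(\sum_{w:(w,v)\in L(t)}\mu_{wv,u}(t)\Big) \ge \sum_{u:(u,v)\in L(t)} 1 = k_v,
\]
where the inequality discards the non-critical transmitting links $(u,v)\notin L(t)$ and applies Lemma~\ref{lem:critical-links-busy}, which gives $\sum_{w:(w,v)\in L(t)}\mu_{wv,u}(t) = 1$ for each critical $(u,v)\in L(t)$. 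Since $R_v$ is a sum of exactly $k_v$ terms, some critical queue $(w^\ast,v)$ satisfies $\hat f'_{w^\ast v}(t) \ge 1$. Applying Lemma~\ref{lem:tau-deriv-estimate} with $W = V$ and using $\sum_{w\in V}\mu_{w^\ast v, w}(t) = \hat f'_{w^\ast v}(t)$ then gives $\tau'(t) = \tau'_{w^\ast v}(t) \ge \hat f'_{w^\ast v}(t)/\lambda_{w^\ast} \ge 1/\lambda_{w^\ast} \ge \beta$. (A minor point to dispatch: every critical origin has $\lambda_{w^\ast} > 0$, because by \eqref{eq:FL-P-1} a zero-rate origin creates no new fluid, so its virtual queues are empty for $t \ge 1$ once $\tau(1) \ge 0$, and empty queues have $\tau_{wv}(t) = t > \tau(t)$ and are never critical.)

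I expect the flow argument to be the main obstacle, specifically the index bookkeeping in Lemma~\ref{lem:critical-links-busy} (a critical transmitting link $(u,v)$ is saturated and serves only critical fluid destined to $v$) and the recognition of the self-dual count $k_v = |\{\text{critical origins to }v\}| = |\{\text{critical links into }v\}|$ that makes the averaging $R_v \ge k_v$ produce a queue with drain rate at least $1$. The concluding drift computation is routine: wherever $g>0$ one has $g'(t) = 1 - \tau'(t) \le 1 - \beta < 0$, so integrating the absolutely continuous $g$ from $g(1) \le 1$ forces $g$ to reach $0$ by $T = 1/(1-\lambda_{\max})$, and $g$ cannot leave $0$ thereafter (any later excursion $g>0$ would begin at a point with $g=0$ where $g$ increases, contradicting $g'<0$ on $\{g>0\}$). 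Since $\tau(t) \le t$ always, $g(t)=0$ means $\tau(t)=t$, which proves the lemma.
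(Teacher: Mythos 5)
Your proposal is correct and follows essentially the same route as the paper's proof: both rest on the derivative identity $\tau'_{uv}(t)=\tau'(t)$ on critical links, on Lemma~\ref{lem:tau-deriv-estimate}, and on the saturation property in Lemma~\ref{lem:critical-links-busy}, to conclude $\tau'(t)\geq 1/\max_u\lambda_u>1$ at regular $t\geq 1$ with $\tau(t)<t$, and then integrate using monotonicity of $\tau$ to get a universal $T$. The only difference is bookkeeping: the paper sums $\lambda_u\tau'_{uv}(t)$ over all of $L(t)$ and interchanges a global double sum to obtain $\tau'(t)\,|L(t)|\max_u\lambda_u\geq |L(t)|$, whereas you localize to a single destination $v$ and pigeonhole out one critical queue draining at rate at least $1$ --- an equivalent computation.
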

\begin{proof}

Note that $0 \leq \tau(1) \leq 1$.
Recall that $\tau(\cdot)$ is non-decreasing and that non-regular times form a (Lebesgue) null set.
Fix some regular time $t \geq 1$ such that $\tau(t) < t$.
By Lemma \ref{lem:tau-deriv-estimate},
for any $(u, v) \in L(t)$,
$$
\lambda_u \tau'_{uv}(t) \geq \sum_{(w,v) \in L(t)} \mu_{uv, w}(t),
$$
and then 
$$
 \sum_{(u,v) \in L(t)} \lambda_u \tau'_{u,v}(t) \geq \sum_{(u,v) \in L(t)} \sum_{(w,v) \in L(t)} \mu_{uv, w}(t)
 = \sum_{(w,v) \in L(t)} \sum_{(u,v) \in L(t)} \mu_{uv, w}(t) =|L(t)|;
$$
the last equality is because $\sum_{(u,v) \in L(t)} \mu_{uv, w}(t) =1$ by the last property in Lemma~\ref{lem:critical-links-busy}.
Recall that $\tau'_{uv}(t) = \tau'(t)$ for each $(u,v) \in L(t)$, so that
$$
 \sum_{(u,v) \in L(t)} \lambda_u \tau'_{uv}(t) = \tau'(t)  \sum_{(u,v) \in L(t)} \lambda_u \le \tau'(t) |L(t)| \max_{u} \lambda_u.
$$
We conclude that
$$
\tau'(t) \geq \beta \doteq \frac{1}{\max_{u} \lambda_u} > 1.
$$

 Recall that $\tau(t)$ is non-decreasing. (It is not hard to see that $\tau(t)$ is Lipschitz. 
 But, we will only use the fact that it is non-decreasing. So, formally speaking, in this proof $\tau(t)$ 
need not be absolutely continuous or even continuous.)
Then in any time interval $[s_1,s_2]$, $s_1 \ge 1$, in which $\tau(t) < t$, the increment of $\tau(t)$ is lower bounded as follows:
$$
\tau(s_2) - \tau(s_1) \ge \int_{s_1}^{s_2} \tau'(t) dt \ge \beta (s_2-s_1).
$$
Recall that, at time $1$, $1-\tau(1) \le 2$.
We conclude that the condition $\tau(t) = t$ is reached at some time $s \le T=2/(\beta-1)$, and then holds for all $t \ge s$. 
\end{proof}

We can now complete the proof of Theorem~\ref{thm:main-result}(i) by verifying
property \eqref{eq:norm-cond}.
Note that any fluid limit is such that $t = \tau(t)$, and then $\sum_{(u, v) \in E} [f_{uv}(t) - \hat{f}_{uv}(t, t)] = 0$,
 for all $t \ge T$. For any subsequence $\{n'\}$ of a given sequence $\{n\}$, we have shown the existence of a further subsequence $\{m\}$, for
which the following property holds. W.p.1, any subsequence $\{m'\}$ of $\{m\}$, has a further subsequence $\{m''\}$ along which
$$\lim_{m''\to\infty}\frac{1}{m''}||X^{m''} (m'' T)|| = \sum_{(u, v) \in E} [f_{uv}(T) - \hat{f}_{uv}(T)] = 0$$
for some fluid limit. But this means that, w.p.1, $\lim_{m\to\infty}\frac{1}{m}||X^{m} (m T)||=0$ for the subsequence $\{m\}$ itself,
thus verifying \eqref{eq:norm-cond}. The proof of Theorem~\ref{thm:main-result}(i) is complete.

\section{Unique-Path Networks Stability Results}
\label{sec:single-path}

For the networks having tree structure (which is a special case of unique-path networks) the maximum stability region is achieved
under any work-conserving discipline, i.e. such that at any link communication epoch a useful packet is sent, unless none is available.
(Oldest-Useful, Random-Useful and Selfish are work-conserving disciplines.)

\begin{theorem}
If a network has tree structure, maximum stability region is achieved under any work-conserving discipline.
\end{theorem}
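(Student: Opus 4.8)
The plan is to reduce the multi-discipline statement to the stability of a single, discipline-independent Markov chain of per-link queue lengths. For each directed link $(u,v)$ let $A_{uv}\subseteq V$ be the component of $u$ obtained by deleting the (unique) tree edge $\{u,v\}$, and let $Q_{uv}(t)$ be the number of packets that are present at $u$, absent at $v$, and originate in $A_{uv}$. The first step is to observe that, because the tree has a unique path between any two nodes, a packet present at $u$ but absent at $v$ is useful on $(u,v)$ if and only if it originated in $A_{uv}$; hence $Q_{uv}$ counts exactly the packets still waiting to traverse $(u,v)$, and such a packet leaves this queue only by being transmitted on $(u,v)$, never via an alternate route. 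The crucial point is then that the transitions of the vector $(Q_{uv}(t))_{(u,v)\in E}$ do not depend on the work-conserving discipline used: an exogenous arrival at $u$ increments $Q_{uv}$ by one for every neighbor $v$ of $u$; and a communication on $(u',u)$ with $Q_{u'u}>0$ decrements $Q_{u'u}$ by one and increments $Q_{uv}$ by one for every neighbor $v\ne u'$ of $u$. Here I would verify the purely topological fact that a packet crossing $(u',u)$ is necessarily absent from each neighbor $v\ne u'$ and must still cross $(u,v)$, so the increments are deterministic and identity-free. Consequently $(Q_{uv}(t))$ is an autonomous continuous-time Markov chain, identical for every work-conserving discipline.

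Next I would identify this chain as a feedforward network and compute its per-link load. Declaring $(u',u)\prec(u,v)$ whenever a service of $(u',u)$ can feed $Q_{uv}$ (that is, $u'\sim u$, $v\sim u$, $v\ne u'$), a directed cycle would correspond to a non-backtracking closed walk in the tree, which cannot exist; hence the feeding relation is acyclic, the links admit a topological order, and every chain in the order has length at most $N-1$. The total long-run input rate into $Q_{uv}$ is $\Lambda_{uv}:=\sum_{s\in A_{uv}}\lambda_s$, since the packets ever entering $Q_{uv}$ are precisely those originating in $A_{uv}$. Taking the cut $S=A_{uv}$ in the necessary condition \eqref{eq-cut-cond}, and using that $\{u,v\}$ is the only tree edge joining $A_{uv}$ to its complement, yields $\Lambda_{uv}<c_{uv}$ for every link; thus \eqref{eq-cut-cond} is exactly the statement that each queue in the feedforward network is strictly subcritically loaded.

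With the feedforward structure and strict per-link subcriticality established, the stability of $(Q_{uv}(t))$ follows by the fluid-limit technique already used for Theorem~\ref{thm:main-result}: in the fluid model each coordinate drains at rate $c_{uv}>\Lambda_{uv}$ whenever nonempty, so, solving in topological order, every fluid coordinate reaches $0$ in finite time and stays there, giving positive recurrence. (Equivalently one may induct on the topological order, with the base case an $M/M/1$ queue at a leaf link, or exhibit a linear Lyapunov function $\sum_{(u,v)\in E} w_{uv}Q_{uv}$ with weights decreasing along $\prec$.) Finally I would transfer this back to $X(t)$: the system is empty precisely when all $Q_{uv}=0$, so, started from the empty state, the return time of $X(t)$ to empty coincides almost surely with the return time of the positive-recurrent autonomous chain $(Q_{uv}(t))$ to the origin, which has finite mean; since $X(t)$ is irreducible, it is positive recurrent. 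Because $(Q_{uv}(t))$ is the same chain for all work-conserving disciplines, this proves maximum stability for all of them at once.

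The main obstacle I anticipate is the first step: verifying carefully that $(Q_{uv}(t))$ is genuinely autonomous and discipline-independent. This rests on the tree property that a transmitted packet is never already present at the neighbors it fans out to and always still needs to reach them, so each transmission triggers a fixed set of unit increments to all neighbors except the one the packet arrived from, regardless of which useful packet the discipline selects; pinning down this bookkeeping precisely (in particular excluding exactly the neighbor $u'$ from which the packet came) is where the real care is needed. Once this reduction is secured, the acyclicity of $\prec$ and the per-link load computation are routine, and the positive recurrence of a strictly subcritical feedforward network is standard.
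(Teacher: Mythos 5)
Your proof is correct, and it takes a genuinely different route from the paper's. The paper only sketches a fluid-limit drift argument on the original process: for each link it tracks the cumulative fluid $y_{uv}$ that originated in the component $V_{uv}$ (your $A_{uv}$) and has not yet crossed $(u,v)$ -- including fluid still upstream of $u$ -- and shows that the normalized maximum $\max_{(u,v)} y_{uv}(t)/|V_{uv}|$ strictly decreases whenever positive, using \eqn{eq-cut-cond} with $S=V_{uv}$ to get the per-edge margin. Your argument replaces this with a sharper structural observation that the paper does not make: because presence sets in a tree are subtrees containing the origin, a packet present at $u$ and absent at $v$ necessarily originated in $A_{uv}$, can exit that queue only via a transmission on $(u,v)$ itself, and upon crossing $(u',u)$ is absent at every neighbor $v\ne u'$ of $u$; hence the per-link count vector $(Q_{uv}(t))$ is an autonomous Markov chain whose transition law is \emph{identical} for every work-conserving discipline, regardless of which useful packet is selected. (Your phrasing ``useful iff it originated in $A_{uv}$'' is slightly off -- any packet present at $u$ and absent at $v$ is useful by definition; the content is the ``only if'' direction -- but you use the claim correctly.) This reduction buys you the ``any work-conserving discipline'' conclusion for free, since all disciplines induce literally the same chain, whereas the paper must note that its drift estimate holds uniformly over disciplines; it also converts the problem into textbook stability of a strictly subcritical feedforward branching network (your acyclicity-of-$\prec$ argument via non-backtracking walks is sound, as is the per-link load identity $\Lambda_{uv}<c_{uv}$ from the cut $S=A_{uv}$), avoiding the $1/|V_{uv}|$ normalization trick entirely. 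The paper's approach, for its part, stays closer to the fluid machinery reused elsewhere in the paper and extends more directly to the unique-path setting. Your transfer-back step is also valid: all $Q_{uv}=0$ exactly when the network is empty, so finite mean return time of the autonomous chain to the origin gives finite mean return time of the irreducible chain $X(t)$ to the empty state, hence positive recurrence.
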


\begin{proof}
	We give only a sketch of the argument as the details are straightforward. Recall that the condition \eqn{eq-cut-cond} is necessary for stability.
	So, it suffices to show that, under any given work-conserving discipline, 
	the network is stable when condition \eqn{eq-cut-cond} holds. Again, we use the fluid limit technique (see Section ~\ref{sec:oldest-first}). 
	For any fluid limit consider the following.
	For a fixed link $(u,v) \in E$ let $V_{uv}$ denote the subset of those nodes which would be in the same graph component as $u$, if links $(u,v)$
	and $(v,u)$ would be removed. (By the definition of tree structure, it would be exactly two components.) Denote by $y_{uv}(t)$ the ``amount of fluid''
	that arrived at nodes $w\in V_{uv}$, by has not yet ``crossed'' link $(u,v)$. Then, at any regular time point $t$, such that $y_{uv}(t)>0$
	for at least one link $(u,v)$, we have
	\beql{eq-tree-deriv}
	\frac{d}{dt} \max_{(u,v)} y_{uv}(t)/|V_{uv}| < 0.
	\eeql
	This is because for any link $(u,v)$ for which the maximum in \eqn{eq-tree-deriv} is attained, there must be a non-zero amount of fluid
	included in $y_{uv}(t)$, which is already present at node $u$; then this fluid crosses link $(u,v)$ at the rate $c_{uv}$ which, by \eqn{eq-cut-cond},
	is strictly greater than the rate $\sum_{w\in K_{uv}} \lambda_w$ at which this fluid arrives into the network.
We omit further details.
\end{proof}

For general unique-path networks, we have the following 
\begin{theorem}
A unique-path network under Oldest-Useful discipline has maximum stability region.
\end{theorem}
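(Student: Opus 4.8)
The plan is to first extract the structural consequence of the unique-path hypothesis that makes Oldest-Useful tractable, and then run a fluid-limit argument in the spirit of the proof of Theorem~\ref{thm:main-result} in Section~\ref{sec-ou-stabil}. First I would prove a \emph{deterministic-routing lemma}: in a (strongly connected) unique-path network, if a packet with source $s$ is useful on link $(u,v)$ -- present at $u$ but not at $v$ -- then $(u,v)$ lies on the unique directed simple path $P(s,v)$ from $s$ to $v$, and more generally a packet first reaches any node $w$ along $P(s,w)$. The proof is a short case analysis: the first arrival of the $s$-packet at $u$ is along $P(s,u)$ (any directed walk reaching $u$ contains a simple $s$-to-$u$ path, which by uniqueness is $P(s,u)$); if $v\notin P(s,u)$ then $P(s,u)\cdot(u,v)$ is a simple $s$-to-$v$ path, so by uniqueness equals $P(s,v)$ and $(u,v)\in P(s,v)$; while if $v\in P(s,u)$ the packet already passed through $v$, contradicting usefulness. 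Hence each packet from $s$ propagates along the fixed source-rooted tree $T_s$ (the union of the $P(s,\cdot)$), crosses every link of $T_s$ exactly once, and -- by Oldest-Useful and the nesting property already noted in Section~\ref{sec-ou-stabil} -- crosses each link in age order. Thus link $(u,v)$ serves exactly the packets of the sources in $A_{uv}:=\{s:(u,v)\in T_s\}$, oldest first.

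Next I would record the cut structure. For each link $(u,v)$, the set $A_{uv}$ is precisely the set of nodes that cannot reach $v$ once the edge $(u,v)$ is deleted; a one-line argument (if $p\in A_{uv}$, $q\notin A_{uv}$ and $(p,q)\neq(u,v)$ then $p\to q\rightsquigarrow v$ avoids $(u,v)$, forcing $p\notin A_{uv}$) shows that $(u,v)$ is the \emph{unique} edge leaving $A_{uv}$. Hence the necessary condition \eqn{eq-cut-cond} applied to $S=A_{uv}$ reads $\Lambda_{uv}:=\sum_{s\in A_{uv}}\lambda_s<c_{uv}$; conversely, summing these per-link inequalities over the edges crossing an arbitrary cut $S$ (each tree $T_s$ with $s\in S$ crosses $S$ at least once) recovers \eqn{eq-cut-cond}. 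So \eqn{eq-cut-cond} is equivalent to the per-link load conditions $\Lambda_{uv}<c_{uv}$, and it suffices to prove stability under the latter.

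With deterministic tree-routes and global oldest-first service, the system is exactly of the type treated in \cite{stolyar2003control}, and I would prove stability by re-using the fluid-limit machinery of Section~\ref{sec-ou-stabil}, taking as Lyapunov function the oldest arrival time $\tau(t)=\min_{(u,v)}\tau_{uv}(t)$, where $\tau_{uv}(t)$ is the fluid-scaled arrival time of the oldest packet present at $u$ but not at $v$. The goal is again to exhibit $\beta>1$ with $\tau'(t)\geq\beta$ at every regular time with $\tau(t)<t$, so that $\tau$ reaches $t$ in bounded time and the fluid empties; the analogues of Definition~\ref{def:fluid-limit} and Lemmas~\ref{lem:tau-deriv-estimate}--\ref{lem:critical-links-busy} carry over, the only change being that the per-link bound $\tau'_{uv}\geq c_{uv}/\Lambda_{uv}$ uses the aggregate load $\Lambda_{uv}$ in place of the single rate $\lambda_u$.

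The hard part will be the network (tandem/tree) propagation, absent in Theorem~\ref{thm:main-result} because there the virtual queues held source-$u$ packets sitting at $u$ from birth. Here a critical link $(u,v)$ could in principle be replenished with oldest-vintage fluid delivered to $u$ by an upstream critical link, so the per-link derivative estimates do not combine naively. The intended resolution uses that $\tau(t)$ is the \emph{globally} oldest vintage: since no older fluid exists behind it, along each source tree the oldest-vintage ``pulse'' empties the upstream links it has already traversed and hence arrives at its frontier link unreplenished; combined with the single-edge-cut structure (all $A_{uv}$-originated fluid must exit through $(u,v)$) and the density bound $\Lambda_{uv}<c_{uv}$, this forces $\tau'_{uv}\geq c_{uv}/\Lambda_{uv}>1$ at each critical link, whence $\tau'(t)\geq\beta:=\min_{(u,v)}c_{uv}/\Lambda_{uv}>1$. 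Making this no-replenishment statement rigorous in the fluid limit is the main obstacle, and it is exactly the point where Oldest-Useful (largest-age-first), rather than mere work-conservation, is essential.
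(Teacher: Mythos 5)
Your structural groundwork is correct and in fact more detailed than what the paper writes down: the deterministic-routing lemma (a source-$s$ packet is never useful off the fixed tree $T_s$, spreads along $T_s$, and crosses each tree link exactly once, in age order by the nesting property), the observation that $(u,v)$ is the unique edge leaving $A_{uv}$, and the resulting equivalence of \eqn{eq-cut-cond} with the per-link conditions $\Lambda_{uv}<c_{uv}$ all check out (and restricting to strongly connected networks is indeed without loss, since \eqn{eq-cut-cond} together with $c_{uv}>0$ forces every nonempty proper subset of $V$ to have an outgoing edge). This corresponds to the paper's single preparatory observation, which it states in one line without proof.

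The proposal then has a genuine, self-acknowledged gap exactly where the proof has to happen. You attempt to transplant the fluid-limit machinery of Section~\ref{sec-ou-stabil}, and you correctly diagnose why the combination step breaks: in Theorem~\ref{thm:main-result} the $(u,v)$-virtual queues are fed only by exogenous arrivals (every packet sits at its source from birth), so the analogue of Lemma~\ref{lem:critical-links-busy} --- critical links serving critical fluid at full rate --- comes essentially for free; in a unique-path network the oldest vintage can be stuck several hops upstream, so a frontier link may be starved or replenished, and the per-link derivative estimates do not combine naively. Your ``pulse arrives unreplenished'' heuristic is the right intuition, but you explicitly leave its fluid-limit justification as ``the main obstacle,'' so as a standalone argument the proof is incomplete at its crux. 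The paper does not re-derive this step at all: having reduced to deterministic tree routes, it observes that the system is now a multiclass network with fixed routes under oldest-delay-first service, so the Largest-Weighted-Delay-First stability proof of \cite{stolyar2003control} applies essentially as is (and even yields the stronger large-deviations optimality for the AoI tail). You yourself wrote that the system ``is exactly of the type treated in \cite{stolyar2003control}''; the correct move at that point is to invoke that result --- whose sample-path/fluid analysis already resolves precisely the upstream-propagation issue you are stuck on --- rather than to re-run the Section~\ref{sec-ou-stabil} argument, which, as the paper notes in Section~\ref{subsubsec:proof-highlights}, was developed specifically because packet routes there are random and \cite{stolyar2003control} does not directly apply.
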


\begin{proof} In a unique-path network, each packet originating at a node $u$ is disseminated along a fixed tree rooted at $u$.
(This is unlike the situation with symmetric networks -- see the comments in Section \ref{subsubsec:proof-highlights}.)
Given that, the stability proof in \cite{stolyar2003control} for a Largest-Weighted-Delay-First discipline applies essentially as is.
In fact, the stronger  -- large deviations -- results in \cite{stolyar2003control} apply as well, so that  Oldest-Useful is optimal 
in the sense of maximizing the decay rate of the tail of stationary distribution of the AoI in the network.
We refer reader to \cite{stolyar2003control} for details.
\end{proof}

\section{Relative Performance of Disciplines in Symmetric Systems: Simulation Results}
\label{sec:comparisons}

We compare the Oldest-Useful, Random-Useful, and Selfish disciplines via simulation. The version of Selfish discipline is such that,
when a node does not have own (originating at it) useful packets, it sends other useful packets also in the oldest-first order.

We simulate symmetric systems with the number of nodes $N = 50, 100, 150, 200$, link capacities $1$, and
with load values 
$ \lambda = 0.3, 0.5, 0.7$. (Recall that we proved that, under all three disciplines, $\lambda<1$ is necessary and
sufficient for stability.)
The actual simulation data is given in 
Table~\ref{tab:sim-data} in Appendix~\ref{sec-supporting-simulation}. It contains
the steady-state averages for the following performance metrics: the number of {\em distinct packets}, 
the number of {\em undelivered packet copies} and the AoI. They are defined as follows.
The number of distinct packets is the total number of packets currently present in the network.
(Note that, by Little's law, the mean number of distinct packets is equal to the mean network sojourn time times $\lambda N$.) 
If a packet is present at exactly $i \in \{1,2,\ldots, N-1\}$ nodes (i.e. it is a stage $i$ packet), 
we say that it has $N-i$ undelivered copies;
the number of undelivered packet copies is the total number of undelivered copies of all packets 
currently present  in the network.
Recall that AoI is the age of the oldest packet in the network.

We see that, as desired, Oldest-Useful discipline out-performs the Random-Useful discipline in terms of average AoI.
This cannot be the case for arbitrary networks, because, as we have shown, for some network topologies Oldest-Useful 
may not be even maximally stable; however, this is the case for symmetric networks.

The Random-Useful discipline out-performs the Oldest-Useful in terms of the average number of undelivered packet copies.
The Random-Useful discipline has a greater mean number of packets (which is proportional to the mean sojourn time) for $\lambda = 0.3$, but a lesser mean number of packets for $\lambda = 0.5, 0.7$.

Surprisingly, the Selfish discipline seems to have the best performance for the mean number of distinct packets, and especially for the AoI.

Regarding the dependence of the performance metrics on the network size $N$, as it becomes large,
this is the subject of Section~\ref{sec:delays}. There we will show that it is natural to expect that the mean sojourn time should grow as $O(\log N / N)$, and therefore (by Little's law) the mean number of distinct packets should grow as 
$O(\log N)$, and the mean AoI should grow as $O(\log N / N)$; finally, the mean number of undelivered packet copies 
should grow as $O(N \log N)$. At this point we note that the results in Table~\ref{tab:sim-data} (in Appendix~\ref{sec-supporting-simulation}) do appear to comply with these expectations, for each discipline.

\section{Observations and Conjectures about Packet Delays in 
a Large-Scale Symmetric System}
\label{sec:delays}
In this section, we consider 
the cumulative delays experienced by a packet as it propagates through the symmetric system,
specifically their limiting behavior as $N \to \infty$.
We state some conjectures about this behavior, supported by heuristic arguments and 
simulation experiments. Parts of the development in this section are rigorous -- the corresponding facts are stated as theorems, lemmas, propositions. The conjectures are stated as such. 

Throughout the section we consider a symmetric system,
with $N$ nodes, Poisson arrivals at each node at rate $\lambda < 1$, 
and each link capacity (communication rate) equal to $1$. 
Let us refer to this model as the {\em finite-capacity system,}
to distinguish it from a modified version, called free system, which we introduce next.

\subsection{Free System}
\label{sec-free-system}

Consider the following modification of the finite-capacity system: at any communication epoch at a link, {\em all} useful packets at this link are transmitted.
We call this the \emph{free system}, because here the propagation of each packet depends only on the communication epochs on the links; different packets do not ``interfere'' with each other in any way.
(One can also say that here each link has infinite transmission capacity.)
Therefore, from the perspective of any fixed packet, the process of its propagation through the network in the free system is equivalent to the well-known gossip model~\cite{ganeshnotes, shah2009gossip}. We will use the free system as a benchmark, with respect to which we will consider packet delays in the finite-capacity system. 

Sometimes, for notational convenience, we will refer to the free system as the system operating under the ``discipline'' labeled `free.' (This somewhat abuses the meaning of the term discipline, because here we actually have a different model.)
The system under any other discipline (RU, OU, ...) except `free,' is automatically a finite-capacity system.

Denote by $T^N_\mathrm{free}$ the (random) sojourn time of a packet in the free system with $N$ nodes.
\begin{lemma}
\label{lem-free-slowdown}
	The following holds:
\beql{eq-free1}
	\E[T^N_\mathrm{free}] = \sum_{i=1}^{N-1}\frac{1}{i(N-i)} = \frac{1}{N}\sum_{i=1}^{N-1}\left(\frac{1}{i} + \frac{1}{N-i}\right),
\eeql
\beql{eq-free2}
	\lim_{N\to\infty} \E[T^N_\mathrm{free}] /  [2 \log N /N] = 1,
\eeql
\beql{eq-free3}
	T^N_\mathrm{free} /  [2 \log N /N] \Rightarrow 1, ~~N \to \infty.
\eeql
\end{lemma}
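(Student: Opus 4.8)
The plan is to use the fact that in the free system different packets propagate independently, so it suffices to track a single tagged packet, whose propagation is exactly the gossip (rumour-spreading) process on the complete graph. First I would argue that the tagged packet passes through stages $i = 1, 2, \ldots, N-1$ (being present at exactly $i$ nodes at stage $i$) and that, by the symmetry of the complete homogeneous network, the rate at which it advances from stage $i$ to stage $i+1$ equals $i(N-i)$: there are exactly $i(N-i)$ directed links leading from one of the $i$ informed nodes to one of the $N-i$ uninformed nodes, each firing at rate $1$, and the first such firing moves the packet to stage $i+1$. By the memorylessness of the driving Poisson processes and the strong Markov property, the successive holding times $\xi_1, \ldots, \xi_{N-1}$ at the stages are \emph{independent}, with $\xi_i \sim \mathrm{Exp}(i(N-i))$. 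Hence $T^N_\mathrm{free} = \sum_{i=1}^{N-1}\xi_i$, the absorption time of a pure-birth chain.

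Given this representation, \eqref{eq-free1} is immediate: $\E[T^N_\mathrm{free}] = \sum_{i=1}^{N-1}\E[\xi_i] = \sum_{i=1}^{N-1} 1/(i(N-i))$, and the partial-fraction identity $1/(i(N-i)) = (1/N)(1/i + 1/(N-i))$ gives the second form. For \eqref{eq-free2} I would rewrite the right-hand side as $\frac{1}{N}\sum_{i=1}^{N-1}(1/i + 1/(N-i)) = \frac{2}{N}H_{N-1}$, where $H_{N-1} = \sum_{i=1}^{N-1} 1/i$ is the harmonic number, and invoke the classical asymptotics $H_{N-1} = \log N + \gamma + o(1)$ to conclude that $\E[T^N_\mathrm{free}] = (2\log N / N)(1 + o(1))$, which is exactly \eqref{eq-free2}.

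For the distributional statement \eqref{eq-free3}, since convergence in distribution to a constant is equivalent to convergence in probability, I would control the variance and apply Chebyshev. By independence, $\var(T^N_\mathrm{free}) = \sum_{i=1}^{N-1} 1/(i(N-i))^2$. Squaring the partial-fraction identity and summing termwise bounds this by $\frac{1}{N^2}\bigl(\sum_i 1/i^2 + 2\sum_i 1/(i(N-i)) + \sum_i 1/(N-i)^2\bigr) = O(1/N^2)$, using $\sum_{i\ge 1} 1/i^2 = O(1)$ and $\sum_i 1/(i(N-i)) = O(\log N / N)$. Since $\E[T^N_\mathrm{free}]$ is of order $\log N / N$, the ratio $\var(T^N_\mathrm{free})/\E[T^N_\mathrm{free}]^2$ is of order $1/(\log N)^2 \to 0$, so $T^N_\mathrm{free}/\E[T^N_\mathrm{free}] \to 1$ in probability; combined with \eqref{eq-free2} this yields \eqref{eq-free3}.

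The computations here are entirely routine; the one point requiring care — and the one I would state most explicitly — is the justification of the exponential holding times of rate $i(N-i)$ and their independence, i.e. that the tagged-packet process is genuinely a time-homogeneous pure-birth chain whose exit rate from stage $i$ does not depend on \emph{which} set of $i$ nodes is currently informed. This is precisely where completeness and homogeneity (all directed links present, each with rate $1$) are used: on a general graph the stage-$i$ rate would depend on the current informed set, and the clean product-form representation would be lost.
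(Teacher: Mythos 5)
Your proposal is correct and follows essentially the same route as the paper's proof: identifying the stage-$i$ holding time as exponential of rate $i(N-i)$ (since a stage-$i$ packet is useful on exactly $i(N-i)$ unit-rate links), summing expectations and using harmonic-number asymptotics for \eqref{eq-free1}--\eqref{eq-free2}, and showing the variance of the normalized sojourn time vanishes for \eqref{eq-free3}. The only difference is that you spell out the independence of the holding times and the variance bound explicitly, which the paper leaves implicit.
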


\begin{proof}
	A  stage $i$ packet is useful on $i(N-i)$ links. Recall that communication epochs on each link follow a unit rate Poisson process.
	Thus, the expected time for a packet to move from stage $i$ to stage $i+1$ is $\frac{1}{i(N-i)}$.
	Then \eqn{eq-free1} follows, since the packet departs the system immediately upon reaching stage $N$.
	Relation \eqn{eq-free2} obviously follows from \eqn{eq-free1}; \eqn{eq-free3} follows by showing that the variance 
	of left-hand side goes to $0$.
\end{proof}

\subsection{Normalized delays}

\subsubsection{Normalized sojourn time}

For a fixed discipline and a fixed $N$, the {\em normalized (steady-state) sojourn time}  is the sojourn time normalized (divided by) $\E T^N_\mathrm{free}$; the mean normalized sojourn time will be called the \emph{slowdown}. 

The simulation results in Figure~\ref{fig:OU-NST} 
and Table~\ref{tab:slowdown} (in Appendix~\ref{sec-supporting-simulation}), suggest 
the following
\begin{conj}
	Under RU and OU, for a given 
$\lambda$, as $N\to\infty$, the distribution of the normalized sojourn time converges 
to a fixed distribution (depending on the discipline); 
and the slowdown converges to the expected value of that distribution. 
	\label{conj:slowdown-conv}
\end{conj}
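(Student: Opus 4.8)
The plan is to establish the conjecture through a mean-field (propagation-of-chaos) analysis of the large-$N$ system, combined with a tagged-packet first-passage argument. First I would set up the natural state descriptor: the (random) \emph{profile} recording, for each stage $i \in \{1,\dots,N-1\}$, the number $n_i^N(t)$ of stage-$i$ packets present, together with enough age information to describe the competition each packet faces on its useful links. Conservation of flux across stage boundaries in steady state gives $n_i^N \gamma_i^N = \lambda N$, where $\gamma_i^N$ is the mean per-packet advancement rate from stage $i$ to $i+1$; equivalently, by Little's law, the mean time a packet spends at stage $i$ is $1/\gamma_i^N$. Since the free-system rate is exactly $i(N-i)$ (Lemma~\ref{lem-free-slowdown}), the whole problem reduces to understanding the \emph{local slowdown factors} $\theta_i^N := i(N-i)/\gamma_i^N \ge 1$ and their accumulation along a packet's trajectory, normalized by $\E[T^N_\mathrm{free}] \sim 2\log N/N$.

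Second, I would prove a functional law of large numbers for the profile, showing that under the nonlinear stage-to-$[0,1]$ rescaling of Section~\ref{sec:delays} the rescaled profile converges as $N\to\infty$ to a deterministic limit solving a self-consistent fixed-point (transport) equation: the limiting profile fixes the law of the number $K$ of useful packets competing on a typical link, which fixes the effective advancement rates (for RU, $\gamma_i^N \approx i(N-i)\,\E[1/K]$; for OU, the rate at which a packet wins depends on its age-rank among the useful packets), which in turn reproduce the profile through the flux balance above. The ingredients needed are tightness of the rescaled profile, identification of the limit as the unique solution of the fixed-point equation, and -- crucially -- an interchange of the $t\to\infty$ (stationarity) and $N\to\infty$ limits. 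I would justify the interchange by exhibiting a Lyapunov bound uniform in $N$ (the fluid-limit drift estimate $\tau'(t) \ge \beta > 1$ from the proof of Theorem~\ref{thm:main-result} is the natural source of such a uniform bound) together with global attractivity of the mean-field fixed point.

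Third, with the deterministic limiting environment in hand, I would tag a single packet and invoke propagation of chaos: asymptotically the tagged packet's environment is frozen to the deterministic limit, so its successive stage-advancement epochs form a Markov jump process with explicit rates read off from the limiting profile. Its normalized sojourn time is then the rescaled first-passage time of this process from stage $1$ to stage $N$; I would show this converges in distribution to a limiting first-passage law that depends only on whether the discipline is RU or OU, which is exactly the claimed fixed distribution. Convergence of the slowdown then follows from convergence in distribution together with uniform integrability of the normalized sojourn times, the latter again supplied by the uniform-in-$N$ stability estimates.

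The hard part will be the OU analysis and, within it, the interchange of limits. Under RU the competition enters only through the scalar $\E[1/K]$, so the fixed-point equation is essentially one-dimensional per stage and propagation of chaos is comparatively standard. Under OU, the oldest-useful rule couples a packet's advancement rate to its age-rank, and hence to the \emph{entire} joint age-and-stage structure of the population; this long-range, priority-induced correlation is precisely what obstructs a clean mean-field closure and is the main reason the statement remains a conjecture. A secondary but genuine obstacle is that the dominant contribution to the sojourn time comes from the slowest stages $i=1$ and $i=N-1$ (as already visible in the free-system variance estimate, which is $O(1/N^2)$ and governed by those extreme stages), so the rescaling must be controlled uniformly near the boundary of $[0,1]$, where the mean-field density degenerates.
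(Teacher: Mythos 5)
First, a point of framing: the statement you were asked to prove is not proved in the paper at all --- it is stated as Conjecture~\ref{conj:slowdown-conv}, supported only by simulation evidence (Figure~\ref{fig:OU-NST}, Table~\ref{tab:slowdown}) and, for the artificial Reshuffling discipline, by a heuristic mean-field analysis (Conjecture~\ref{conj:reshuf-Poisson}). So there is no paper proof to compare against; your proposal must stand on its own, and, as you candidly concede in your final paragraph, it does not: your steps 2 and 3 are exactly the open problems, and the authors themselves flag in Remark~\ref{poisson-conj-remark} that even the much easier RS analogue of your step 2 resists a rigorous treatment. Your program is a reasonable articulation of the obvious attack, and its RU ingredients (flux balance, advancement rate $i(N-i)\,\E[1/K]$) match the paper's RS heuristic, including the speed formula \eqn{eq-speed-density}.

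There is, however, a substantive flaw beyond the difficulties you name, and it suggests your route would prove the wrong thing even if completed. Your step 2 invokes a functional LLN for the profile, but the steady-state population is only $O(\log N)$ packets (Little's law plus Lemma~\ref{lem-free-slowdown}), not $O(N)$, so there is no large-population averaging to make the environment deterministic: in the paper's own RS picture the limiting competition on a link is a nondegenerate $\mathrm{Poisson}(\psi)$ random variable (Conjecture~\ref{conj:reshuf-Poisson}(i)), and only the mass profile rescaled by $1/A_N \sim 1/(2\log N)$ per packet is expected to concentrate. More seriously, your step 3 --- freeze the environment and read the limit law off a tagged packet's first-passage time with asymptotically independent stage delays --- would force a \emph{degenerate} limit: conditioned on a frozen environment with bounded local slowdown factors $\theta_i$, the sojourn-time variance is $\sum_i \theta_i^2/(i(N-i))^2 = O(1/N^2)$ (your own boundary-stage computation), which vanishes after division by $(2\log N/N)^2$, so the normalized sojourn time would converge to the constant slowdown, exactly as in the free system \eqn{eq-free3} and as conjectured for RS in Conjecture~\ref{conj:reshuf-Poisson}(iv). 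But the simulated RU/OU normalized sojourn times have variances that do not vanish (Table~\ref{tab:slowdown}, Figure~\ref{fig:OU-NST}); whatever nondegenerate limit law the conjecture posits must come from fluctuations of the $O(\log N)$-sized environment that remain correlated over a packet's entire sojourn (both the sojourn and the environment turnover occur on the $\Theta(\log N/N)$ timescale) --- precisely the randomness your freeze-out discards, and consistent with the paper's observation in Appendix~\ref{sec-profile-dynamics} that the RS-style conjectures do \emph{not} appear to hold under RU. Relatedly, the uniform-in-$N$ Lyapunov bound you hope to extract from the fluid-limit drift estimate $\tau'(t)\ge\beta>1$ in the proof of Theorem~\ref{thm:main-result} is a fixed-$N$ statement on the fluid timescale and carries no evident uniform control at the $\log N/N$ timescale needed for your interchange of limits or for the uniform integrability in your final step.
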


\begin{figure}[htp!]
	\centering 
    \begin{subfigure}[t]{0.43\textwidth}
        \centering
        \includegraphics[scale=0.48]{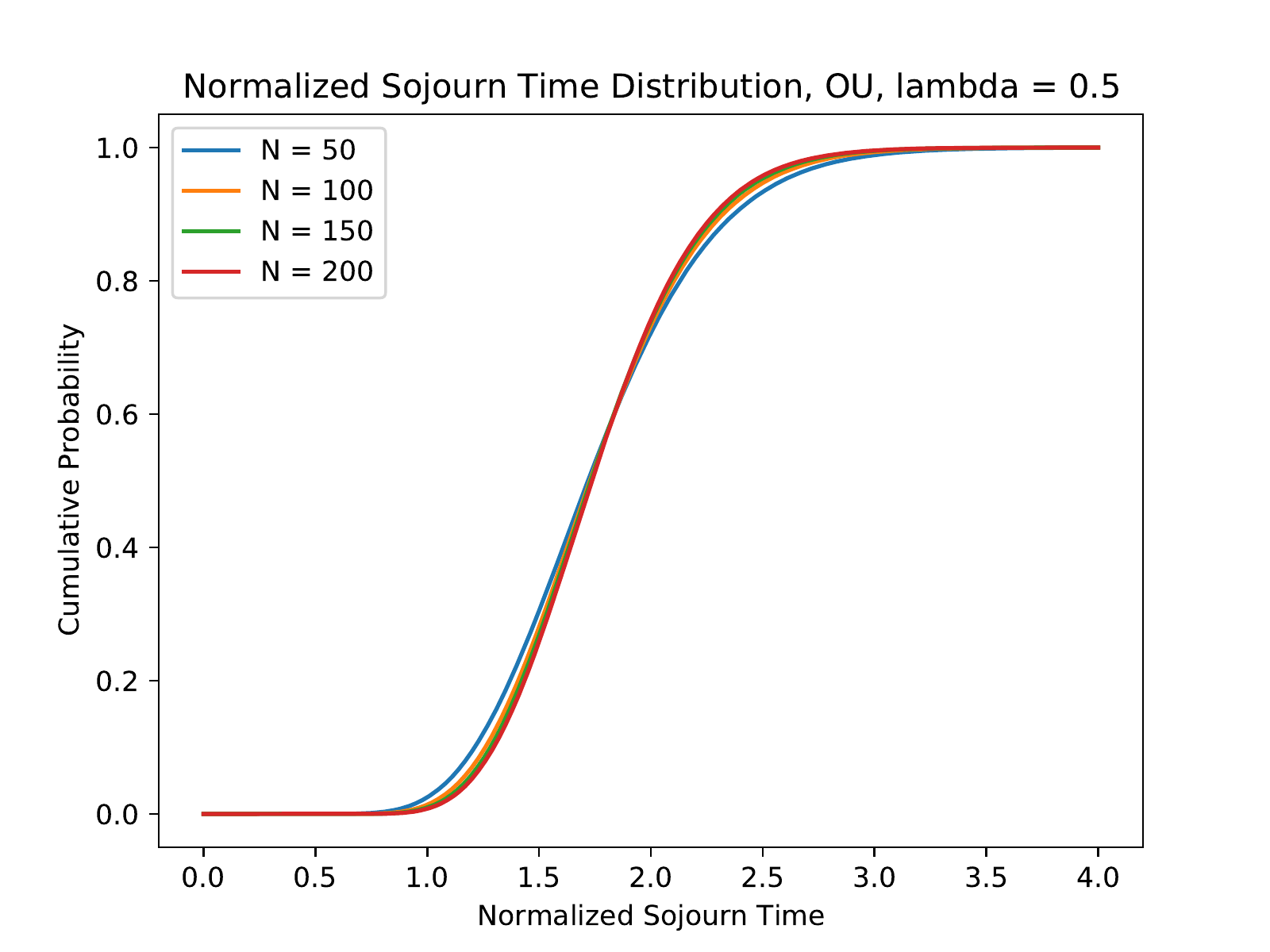}
    \end{subfigure}
    ~ 
    \begin{subfigure}[t]{0.43\textwidth}
        \centering
        \includegraphics[scale=0.48]{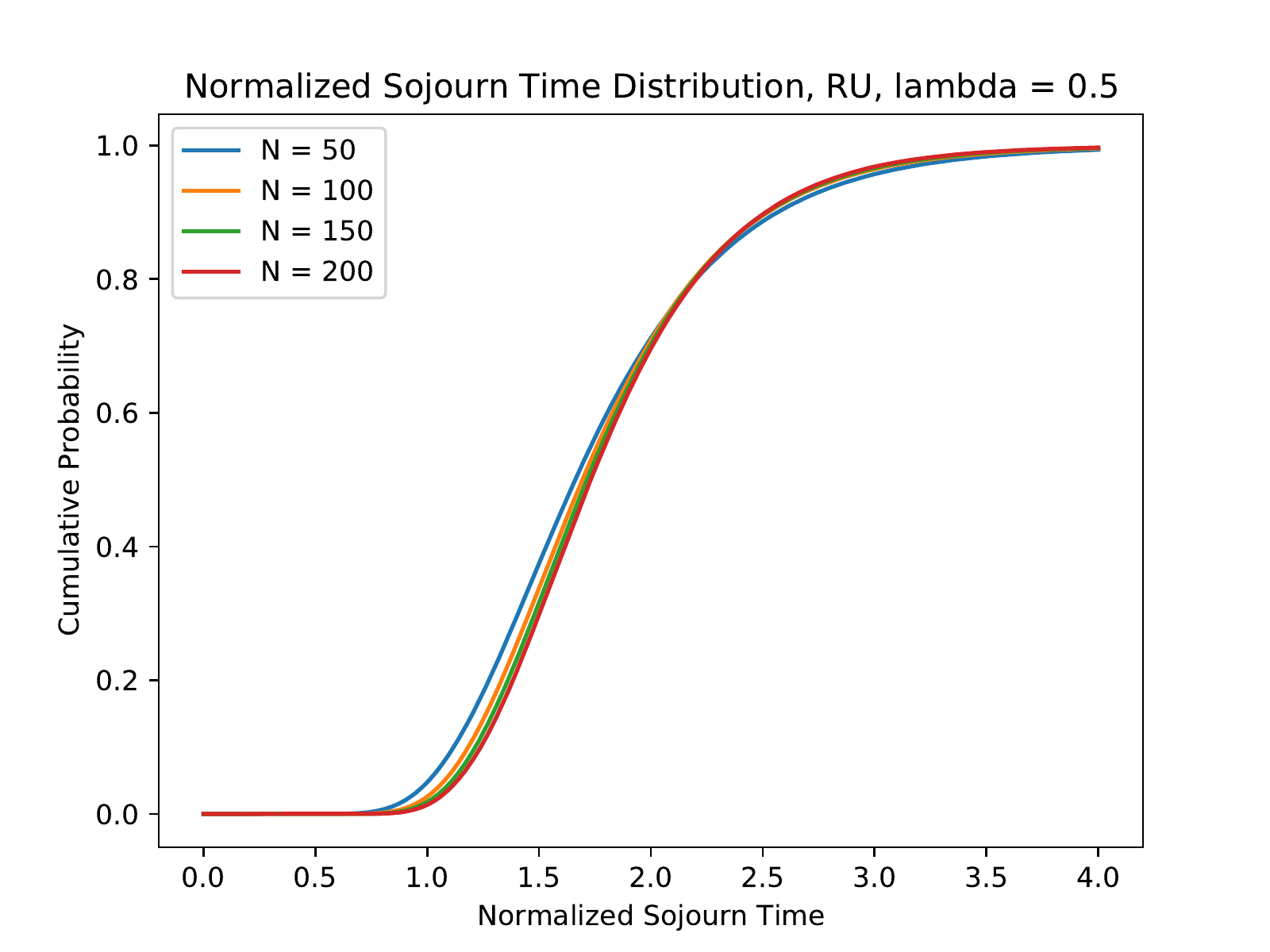}
    \end{subfigure}

    \caption{Simulated normalized sojourn time distributions for the OU and RU disciplines, load 0.5. (The results for loads 0.3 and 0.7 also show very little dependence on $N$.)}
    \label{fig:OU-NST}
\end{figure}

\subsubsection{Natural Space-Time Transformation}

We will also consider the expected times for a packet to reach different stages. For that it is convenient to consider the process of a packet propagation through the stages under the following space and time transformation. 
We will use notation:
$$
B_N := \E[T^N_\mathrm{free}],
$$
 $$
 A_N := N\E[T^N_\mathrm{free}] = \sum_{i=1}^{N-1}\frac{N}{i(N-i)} = \sum_{i=1}^{N-1}\left(\frac{1}{i} + \frac{1}{N-i}\right).
 $$
Consider a system with given $N$. First, we speed up time by the factor $B_N$, so that in a free system the expected sojourn time of a packet is exactly $1$. Second, with each stage $i=1,2, \ldots, N-1, N$, we associate a point $\gamma_{i-1}^N$ within the 
continuous segment $[0,1]$:
$$
\gamma_0^N := 0, ~~~~~\gamma_i^N := \frac{1}{B_N}\sum_{k=1}^{i}\frac{1}{k(N-k)}, ~~i = 1, 2, \ldots, N-1.
$$
With a packet at stage $i=1, \ldots, N-1$ in the original system we associate a particle located at point $\gamma_{i-1}^N$ in the transformed system. (So, in transformed system packets arrive at point $0$ and leave when 
they reach point $1=\gamma_{N-1}^N$, which corresponds to reaching stage $N$ in the original system.) 
Note that the distance from $\gamma_{i-1}^N$ to $\gamma_{i}^N$,
\beql{eq-int-length}
\gamma_{i}^N - \gamma_{i-1}^N = \frac{1}{B_N} \frac{1}{i(N-i)}
\eeql
is exactly equal to the expected time for a particle to move from $\gamma_{i-1}^N$ to $\gamma_{i}^N$ in the free transformed system. 

\begin{rem}
\label{rem-free-slowdown}
Given the definition of the transformed system, it is easy to see that, in the $N\to\infty$ limit, the movement of any particle in the transformed free system is such that the particle arrives at point $0$, moves right deterministically at the constant speed $1$, and departs upon reaching point $1$.  
\end{rem}

In what follows, the term {\em original system} is used to distinguish it from the transformed system.

For future reference, we need the following fact.

\begin{proposition} 
	For any $\epsilon >0$, as $N\to\infty$, the fraction of the points $\{\gamma_k^N, ~k=1,\ldots,N-1\}$
	located in $(1/2-\epsilon,1/2+\epsilon)$, converges to $1$.
	\label{prop:pi-to-half-1}
\end{proposition}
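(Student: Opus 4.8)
The plan is to put $\gamma_i^N$ into closed form in terms of harmonic numbers and then argue that all but a vanishing fraction of the points fall in a ``bulk'' range of indices, on which $\gamma_i^N$ is forced to be close to $1/2$. Write $H_m = \sum_{k=1}^m 1/k$ (with $H_0 = 0$). Clearing the common factor $N$ from the numerator and denominator of $\gamma_i^N$ gives
\[
\gamma_i^N = \frac{\sum_{k=1}^i\left(\tfrac1k + \tfrac1{N-k}\right)}{\sum_{k=1}^{N-1}\left(\tfrac1k+\tfrac1{N-k}\right)} = \frac{H_i + H_{N-1} - H_{N-1-i}}{2H_{N-1}},
\]
where I have used $A_N = \sum_{k=1}^{N-1}(\tfrac1k + \tfrac1{N-k}) = 2H_{N-1}$ together with $\sum_{k=1}^i \tfrac1{N-k} = H_{N-1} - H_{N-1-i}$. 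In particular $\gamma_i^N - \tfrac12 = \tfrac{H_i - H_{N-1-i}}{2H_{N-1}}$, which displays the symmetry $\gamma_i^N + \gamma_{N-1-i}^N = 1$ (so the points are symmetric about $1/2$) and, together with \eqn{eq-int-length}, the fact that $\gamma_i^N$ is strictly increasing in $i$; the latter means that the ``bad'' indices form two end-intervals, although I will not actually need this.

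Next I would establish a uniform bulk estimate: for every $\delta \in (0,1/2)$,
\[
\max_{\delta N \le i \le (1-\delta)N} \left| \gamma_i^N - \tfrac12\right| \longrightarrow 0, \qquad N \to \infty.
\]
This follows from the elementary asymptotic $H_m = \log m + O(1)$ with an absolute $O(1)$ term. For $i$ in this range both $i$ and $N-1-i$ lie between (roughly) $\delta N$ and $(1-\delta)N$, so $H_i - H_{N-1-i} = \log i - \log(N-1-i) + O(1)$ is bounded in absolute value by a constant depending only on $\delta$, whereas the denominator $2H_{N-1} = 2\log N + O(1)$ tends to infinity. Hence the quotient $\gamma_i^N - \tfrac12$ tends to $0$ uniformly over the range.

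Finally I would conclude by a counting argument. Fix $\epsilon > 0$ and $\delta \in (0,1/2)$. By the bulk estimate there is an $N_0$ such that for all $N \ge N_0$ every index $i$ with $\delta N \le i \le (1-\delta)N$ satisfies $|\gamma_i^N - \tfrac12| < \epsilon$. There are at least $(1-2\delta)N - 1$ such indices among $\{1,\dots,N-1\}$, so the fraction of points lying in $(1/2-\epsilon,\,1/2+\epsilon)$ is at least $\tfrac{(1-2\delta)N - 1}{N-1}$, whose limit inferior as $N\to\infty$ equals $1 - 2\delta$. Since $\delta>0$ is arbitrary, this fraction converges to $1$, which is the assertion.

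The argument is essentially elementary and I do not foresee a genuine obstacle; the only point requiring care is the uniformity of the harmonic-number asymptotics across the whole bulk $[\delta N, (1-\delta)N]$, which is handled cleanly by the explicit estimate $H_m = \log m + O(1)$ with an absolute error term. (One could instead avoid harmonic numbers altogether by comparing the partial sums defining $\gamma_i^N$ with the integral $\int dx/[x(N-x)]$, but the harmonic-number route seems simplest.)
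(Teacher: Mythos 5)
Your proof is correct and follows essentially the same route as the paper's: both reduce to the harmonic-sum asymptotics $\gamma_i^N - \tfrac12 = \bigl(H_i - H_{N-1-i}\bigr)/\bigl(2H_{N-1}\bigr)$ with numerator $\log\frac{i}{N-i} + O(1)$ bounded on the bulk and denominator $2\log N + O(1)$ diverging. The only difference is presentational: the paper phrases the bulk estimate via sequences $k(N)/N \to \alpha \in (0,1)$ and leaves the uniformity-plus-counting step implicit, whereas you make it explicit with the range $\delta N \le i \le (1-\delta)N$ and the fraction bound $\bigl((1-2\delta)N - 1\bigr)/(N-1)$, which is a slightly more careful write-up of the same argument.
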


\begin{proof}
The claimed property is equivalent to the following one. For any fixed $\alpha \in (0, 1)$ and any integer-valued sequence 
$k=k(N)$ such that $k/N \to \alpha$, $\lim_{N \to \infty} \gamma_k^N = 1/2$. Let us prove this.

Recall that
$$
\gamma_k^N = \frac{1}{B_N N}\sum_{i=1}^{k}\left( \frac{1}{i} + \frac{1}{N-i} \right),
$$
$\lim_{\ell \to \infty} \log \ell /[\sum_{i=1}^{\ell-1}\frac{1}{i}] = 1$, and $\lim_{N \to \infty} N B_N /  (2 \log N) = 1$.
Then,
	\begin{align*}
		\lim_{N\to\infty} \gamma_k^N & = \lim_{N \to \infty}\frac{\log k + \log N - \log(N - k)}{2 \log N}	 \\
		& = \lim_{N \to \infty}\frac{\log N + \log\frac{k}{N - k}}{2 \log N} = \frac{1}{2}.
	\end{align*}
\end{proof}

\subsubsection{Normalized Delay Profile}

For a fixed discipline D (which may be OU, RU, free, or some other discipline), 
denote by $T^N_D(x)$, the time (or, delay) for a particle to reach or cross point $x \in [0,1]$ in the 
(steady-state) transformed system. Then, $R^N_D(x)=\E T^N_D(x)$ we will call the \emph{mean normalized delay to point $x$},
and the function $R^N_D(x)=\E T^N_D(x), x \in [0,1]$, 
will be called the  \emph{mean normalized delay profile}. Clearly, $R^N_D(1)$ is nothing else but the 
mean normalized sojourn time, i.e. the slowdown. 

Generalizing the argument used in the proof of Lemma~\ref{lem-free-slowdown} (see also Remark~\ref{rem-free-slowdown}), it is easy to obtain the following fact about free system.

\begin{lemma} 
\label{lem-free-delay}
As $N\to\infty$, for each $x\in [0,1]$,
$$
R^N_\mathrm{free}(x) \to x,
$$
$$
T^N_\mathrm{free}(x) \Rightarrow x.
$$
\end{lemma}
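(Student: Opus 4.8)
The plan is to lift the finite-$N$ computation behind Lemma~\ref{lem-free-slowdown} to a statement about partial sojourn times, namely the time to reach a stage corresponding to the point $x$. Fix $x \in [0,1]$ and, for each $N$, let $k=k(N)$ be the largest index such that $\gamma_{k-1}^N \le x$; this is the stage at which a particle first reaches or crosses $x$ in the transformed system. By the same reasoning as in Lemma~\ref{lem-free-slowdown}, in the free system the time a packet spends at stage $i$ is an independent exponential random variable with mean $\frac{1}{i(N-i)}$, so after the time-speedup by the factor $B_N$ the (transformed) time to reach point $x$ is
\begin{equation*}
T^N_\mathrm{free}(x) = \frac{1}{B_N}\sum_{i=1}^{k(N)-1} E_i,
\end{equation*}
where the $E_i$ are independent exponentials with $\E[E_i] = \frac{1}{i(N-i)}$. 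Taking expectations gives $R^N_\mathrm{free}(x) = \E T^N_\mathrm{free}(x) = \gamma_{k(N)-1}^N$ directly from the definition of $\gamma^N$, so the convergence $R^N_\mathrm{free}(x) \to x$ is immediate once I show $\gamma_{k(N)-1}^N \to x$.

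First I would establish the convergence of the mean. By the defining property of $k(N)$ we have $\gamma_{k(N)-1}^N \le x < \gamma_{k(N)}^N$, so $0 \le x - \gamma^N_{k(N)-1} < \gamma^N_{k(N)} - \gamma^N_{k(N)-1} = \frac{1}{B_N}\frac{1}{k(N-k)}$ by \eqn{eq-int-length}. Using $B_N \sim 2\log N / N$ from \eqn{eq-free2}, the gap is $O\!\left(\frac{N}{\log N}\cdot \frac{1}{k(N-k)}\right)$, which tends to $0$ except possibly when $k$ is very close to $1$ or to $N$ (i.e. $x$ near the endpoints). For interior $x$ the factor $k(N-k)$ is of order $N^2/(\log N)$ or larger, making the bound vanish; the endpoint cases $x=0$ and $x=1$ are handled separately since $\gamma_0^N = 0$ and $\gamma_{N-1}^N = 1$ exactly for all $N$. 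Thus $R^N_\mathrm{free}(x) = \gamma^N_{k(N)-1} \to x$.

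Second, for the distributional convergence $T^N_\mathrm{free}(x) \Rightarrow x$, since the limit is the constant $x$, it suffices to show $R^N_\mathrm{free}(x) \to x$ together with $\var\big(T^N_\mathrm{free}(x)\big) \to 0$; convergence of the mean to a constant plus vanishing variance yields convergence in probability, hence in distribution, to that constant. The variance computation mirrors the one indicated for \eqn{eq-free3}: by independence of the $E_i$,
\begin{equation*}
\var\big(T^N_\mathrm{free}(x)\big) = \frac{1}{B_N^2}\sum_{i=1}^{k(N)-1}\frac{1}{i^2(N-i)^2} \le \frac{1}{B_N^2}\sum_{i=1}^{N-1}\frac{1}{i^2(N-i)^2},
\end{equation*}
and I would bound the sum by splitting the range at $i=N/2$ and using $\frac{1}{(N-i)^2}\le \frac{4}{N^2}$ on the first half (symmetrically on the second), giving a bound of order $\frac{1}{N^2}\sum_i \frac{1}{i^2} = O(1/N^2)$; dividing by $B_N^2 \sim (2\log N/N)^2$ leaves $O(1/\log^2 N) \to 0$. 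The main obstacle, such as it is, is purely bookkeeping at the endpoints $x \in \{0,1\}$, where the index $k(N)$ hugs the boundary and the per-stage gaps $\frac{1}{B_N}\frac{1}{i(N-i)}$ are individually largest; here I would simply invoke the exact identities $\gamma_0^N=0$, $\gamma_{N-1}^N=1$ rather than the asymptotic estimate, so the stated convergence holds trivially at the endpoints and by the interior estimate elsewhere.
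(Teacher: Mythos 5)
Your proposal is correct and is essentially the paper's own argument: the paper proves this lemma precisely by generalizing Lemma~\ref{lem-free-slowdown} as you do, representing the transformed time to reach $x$ as $\frac{1}{B_N}\sum_i E_i$ over the stages preceding $k(N)$ with independent exponential stage times, identifying the mean with $\gamma^N_{k(N)-1}$ by the definition of the $\gamma^N_i$, and deducing $T^N_{\mathrm{free}}(x)\Rightarrow x$ from the vanishing variance via Chebyshev. Two harmless slips are worth noting: whether $E_{k(N)}$ itself should be included when $\gamma^N_{k(N)-1}<x$ (crossing a non-grid point occurs on leaving stage $k(N)$) changes the mean by only a single gap; and for interior $x\neq 1/2$ the factor $k(N-k)$ is in fact of order $N^{1+2\min(x,1-x)}$, not ``$N^2/\log N$ or larger'' --- but since $k(N-k)\geq N-1$ for every $k$, each gap satisfies $\gamma^N_k-\gamma^N_{k-1}\leq \frac{1}{B_N(N-1)}=O(1/\log N)$ uniformly in $k$, so $R^N_{\mathrm{free}}(x)\to x$ holds at every $x\in[0,1]$ without any endpoint case analysis.
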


Simulated mean normalized delay profiles for the OU and RU disciplines are shown in Figure~\ref{fig:OU-MNDF}.
 Based on these results, we make the following
\begin{conj}
	For the OU and RU disciplines, i.e. D = OU, RU, for a given $\lambda<1$, as $N\to\infty$, 
	the mean normalized delay profile converges,
	$$
	R^N_D(x) \to R_D(x), ~~0 \le x\le 1,
	$$
	where $R_D(\cdot)$ is a continuous increasing function.
	\label{conj:mndf-conv}
\end{conj}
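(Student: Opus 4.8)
The plan is to reduce the delay profile to the steady-state stage-occupancy numbers, and then to analyze those through a mean-field (propagation-of-chaos) limit for the environment seen by a tagged packet. Fix a discipline $\mathrm{D}\in\{\mathrm{OU},\mathrm{RU}\}$ and $\lambda<1$; by Theorem~\ref{th-ru-generalization} and Theorem~\ref{thm:main-result} a steady state exists. Let $m_i^{\mathrm D}$ denote the steady-state number of stage-$i$ packets and let $h_i$ be the (random) time a tagged packet spends at stage $i$. Since a particle at stage $i$ sits at position $\gamma_{i-1}^N$, the transformed time to reach or cross $x$ is $T_{\mathrm D}^N(x)=\frac{1}{B_N}\sum_{i=1}^{k(x)}h_i$ with $k(x)=\min\{j:\gamma_j^N\ge x\}$. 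Because every packet passes through every stage exactly once (stages only increase) and the total creation rate is $\lambda N$, Little's law applied stage-by-stage gives $\E m_i^{\mathrm D}=\lambda N\,\E h_i$, whence
\beql{eq-profile-reduction}
R_N^{\mathrm D}(x)\;=\;\frac{1}{B_N\lambda N}\sum_{i=1}^{k(x)}\E m_i^{\mathrm D}.
\eeql
This identity is exact. It shows that the profile is the normalized cumulative stage-occupancy measure, and that proving Conjecture~\ref{conj:mndf-conv} is equivalent to proving that the measure on $[0,1]$ placing mass $\E m_i^{\mathrm D}/(B_N\lambda N)$ at $\gamma_{i-1}^N$ converges weakly to a limit with a continuous, strictly positive density.

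The mean-field input enters through the velocities. Write $\E h_i=1/v_i^{\mathrm D}$, where $v_i^{\mathrm D}$ is the effective (mean-field) rate at which a tagged stage-$i$ packet advances, and define the \emph{transformed velocity} $g_N^{\mathrm D}(\gamma_{i-1}^N):=v_i^{\mathrm D}/[i(N-i)]$ (for the free system $g\equiv1$ by Remark~\ref{rem-free-slowdown}). I would establish, via propagation of chaos, that the number of packets useful on a generic directed link has an asymptotically Poisson law, whose mean $\nu$ is pinned down by the exact utilization identity: each link transmits a useful packet at rate $\lambda$, so the probability that a link carries a useful packet tends to $\lambda$, forcing $e^{-\nu}=1-\lambda$, i.e.\ $\nu=\log\frac{1}{1-\lambda}$ for $\mathrm{RU}$. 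A size-biased computation then gives, for $\mathrm{RU}$, $\E[(1+\mathrm{Poisson}(\nu))^{-1}]=(1-e^{-\nu})/\nu=\lambda/\nu$, so that $v_i^{\mathrm{RU}}\approx i(N-i)\,\lambda/\nu$ and $g^{\mathrm{RU}}\equiv\lambda/\nu$ is constant; substituting into \eqn{eq-profile-reduction} and passing to the Riemann-sum limit via \eqn{eq-int-length} would yield the linear profile $R_{\mathrm{RU}}(x)=(\nu/\lambda)\,x$. For $\mathrm{OU}$ the same scheme applies, except that the tagged packet advances on a link only when it is the \emph{oldest} useful packet there, so $g^{\mathrm{OU}}$ is governed by the joint stationary law of ages and stages; I expect $g^{\mathrm{OU}}(x)$ to be genuinely nonconstant (older, higher-stage packets are favored), producing a strictly nonlinear $R_{\mathrm{OU}}$.

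Given convergence $g_N^{\mathrm D}(\cdot)\to g_{\mathrm D}(\cdot)$ to a positive continuous function, \eqn{eq-profile-reduction} and \eqn{eq-int-length} identify the limit as $R_{\mathrm D}(x)=\int_0^x dy/g_{\mathrm D}(y)$, which is automatically continuous and strictly increasing, completing the argument. The two genuinely hard steps — and the reason this remains a conjecture — are the following. First, justifying the propagation of chaos and the interchange of the $t\to\infty$ and $N\to\infty$ limits in a regime where the number of packets itself diverges (like $\log N$, per the discussion in Section~\ref{sec:comparisons}), so that the relevant law of large numbers must act on a growing, stage-heterogeneous population and the per-link occupancies must be shown to concentrate. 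Second, controlling the endpoints: near $x=0$ and $x=1$ the stages are strongly stretched by the transformation — by Proposition~\ref{prop:pi-to-half-1} almost all positions $\gamma_k^N$ pile up at $1/2$ — so uniform integrability of $h_i$ over the first and last stages, and the exclusion of atoms at $0$ and $1$ in the limiting measure, require separate estimates. The $\mathrm{OU}$ case additionally demands a tractable description of the stationary age-ordering, which is the principal obstacle there; this is precisely the difficulty the Reshuffling discipline of Section~\ref{sec:reshuffling} is designed to bypass, by making the environment independent across packets by construction.
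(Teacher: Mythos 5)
The statement you are addressing is a conjecture: the paper offers no proof of Conjecture~\ref{conj:mndf-conv}, only simulation evidence (Figure~\ref{fig:OU-MNDF}, Table~\ref{tab:slowdown}) and, for intuition, a heuristic analysis of the artificial Reshuffling (RS) system in Section~\ref{sec:reshuffling}. Your reduction is correct and worthwhile: since stages increase by exactly one at each transmission, every packet visits each stage exactly once, Little's law applied stage-by-stage gives $\E m_i^{\mathrm D}=\lambda N\,\E h_i$, and your identity \eqn{eq-profile-reduction} is exact. It is in fact consistent with the paper's own stage-profile formalism, where it appears implicitly as $\E\phi^N_{\mathrm D}(x,\infty)=\lambda R^N_{\mathrm D}(x)$ (compare the free-system pair $\phi^N_{\mathrm{free}}\to\lambda x$, $R_{\mathrm{free}}(x)=x$, and the RS pair $\phi^N_{\mathrm{RS}}\to\psi x$, $R_{\mathrm{RS}}(x)=(\psi/\lambda)x$). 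You also correctly name the interchange-of-limits and endpoint-stretching difficulties (Proposition~\ref{prop:pi-to-half-1}).

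The genuine gap is that your central mean-field step is not merely unproven for RU --- it is false, and the paper's own data show this. You assert that propagation of chaos gives an asymptotically $\mathrm{Poisson}(\nu)$ number of useful packets per link with $\nu=-\log(1-\lambda)$, hence a constant transformed velocity $\lambda/\nu$ and the \emph{linear} profile $R_{\mathrm{RU}}(x)=(\nu/\lambda)x$. That is exactly the paper's Conjecture~\ref{conj:reshuf-Poisson}, which holds (conjecturally, with strong simulation support) only for RS, precisely because reshuffling re-randomizes every packet's copy placement before each epoch, making the environment seen by a link independent across epochs and packets. Under RU, whether a packet is useful on link $(u,v)$ \emph{persists} between epochs --- the copy placement is a growing set determined by the packet's history --- so the size-biased computation $\E[(1+A)^{-1}]$ with $A\sim\mathrm{Poisson}(\nu)$, i.e.\ \eqn{eq-speed-density}, has no justification. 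Quantitatively: at $\lambda=0.5$ your prediction gives slowdown $2\log 2\approx 1.386$, while Table~\ref{tab:slowdown} reports RU slowdowns $\approx 1.77$--$1.82$; at $\lambda=0.7$ the prediction is $\approx 1.72$ versus measured $\approx 2.75$--$2.86$, with the discrepancy growing in $\lambda$. The paper states this explicitly in Appendix~\ref{sec-profile-dynamics}: the conjectures made for RS ``do \emph{not} appear to hold under RU.'' Note also that only the utilization identity survives for RU (each link's utilization is exactly $\lambda$ by symmetry); the Poisson law and the independence do not. So your programme, after the (correct) reduction \eqn{eq-profile-reduction}, would need a genuinely different characterization of the stationary environment under RU --- and under OU additionally of the joint age--stage law --- which is exactly what remains open and why the statement is a conjecture.
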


\begin{figure}[htp!]
	\centering 
    \begin{subfigure}[t]{0.43\textwidth}
        \centering
        \includegraphics[scale=0.48]{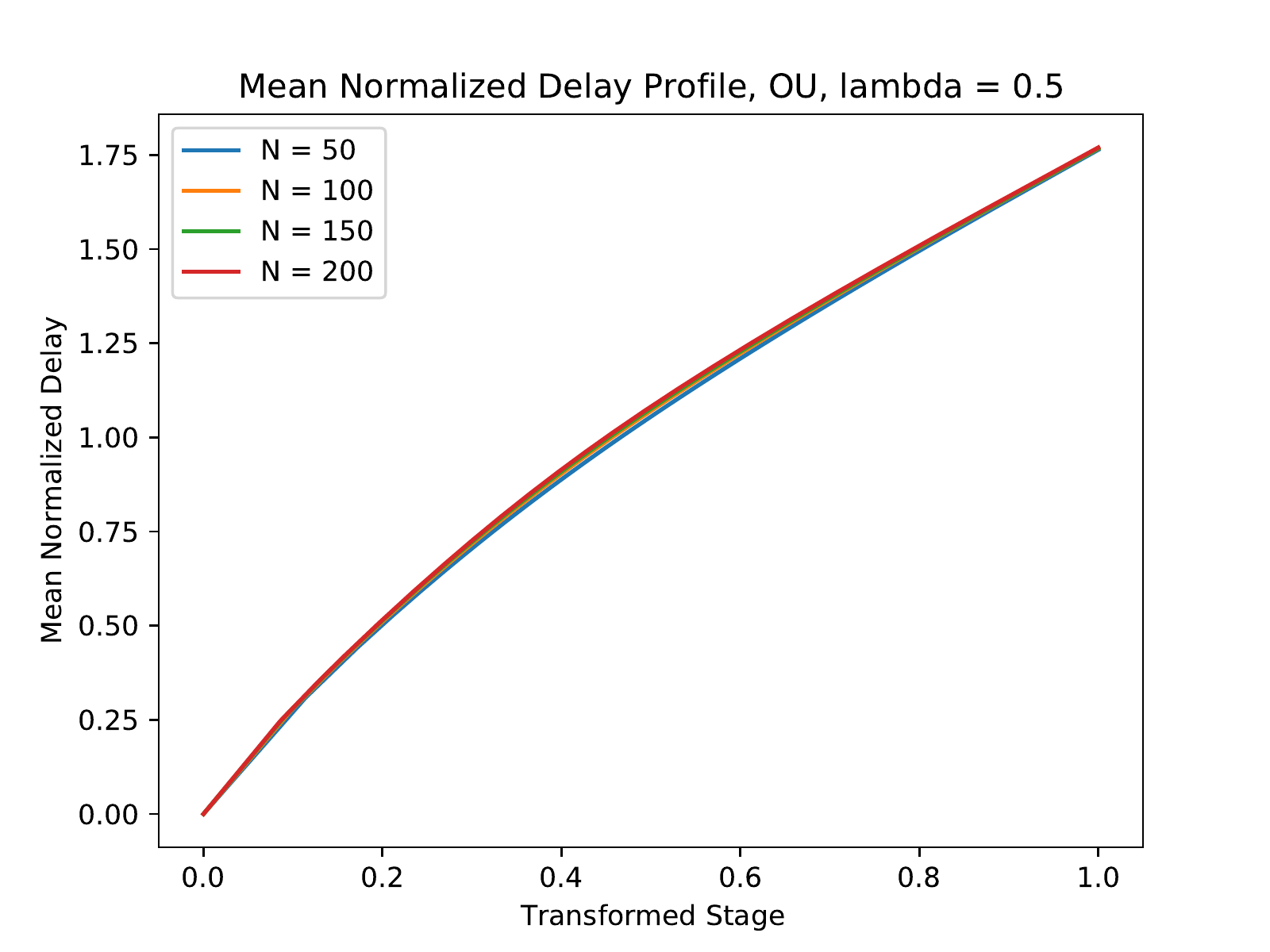}
    \end{subfigure}
    ~
        \begin{subfigure}[t]{0.43\textwidth}
        \centering
        \includegraphics[scale=0.48]{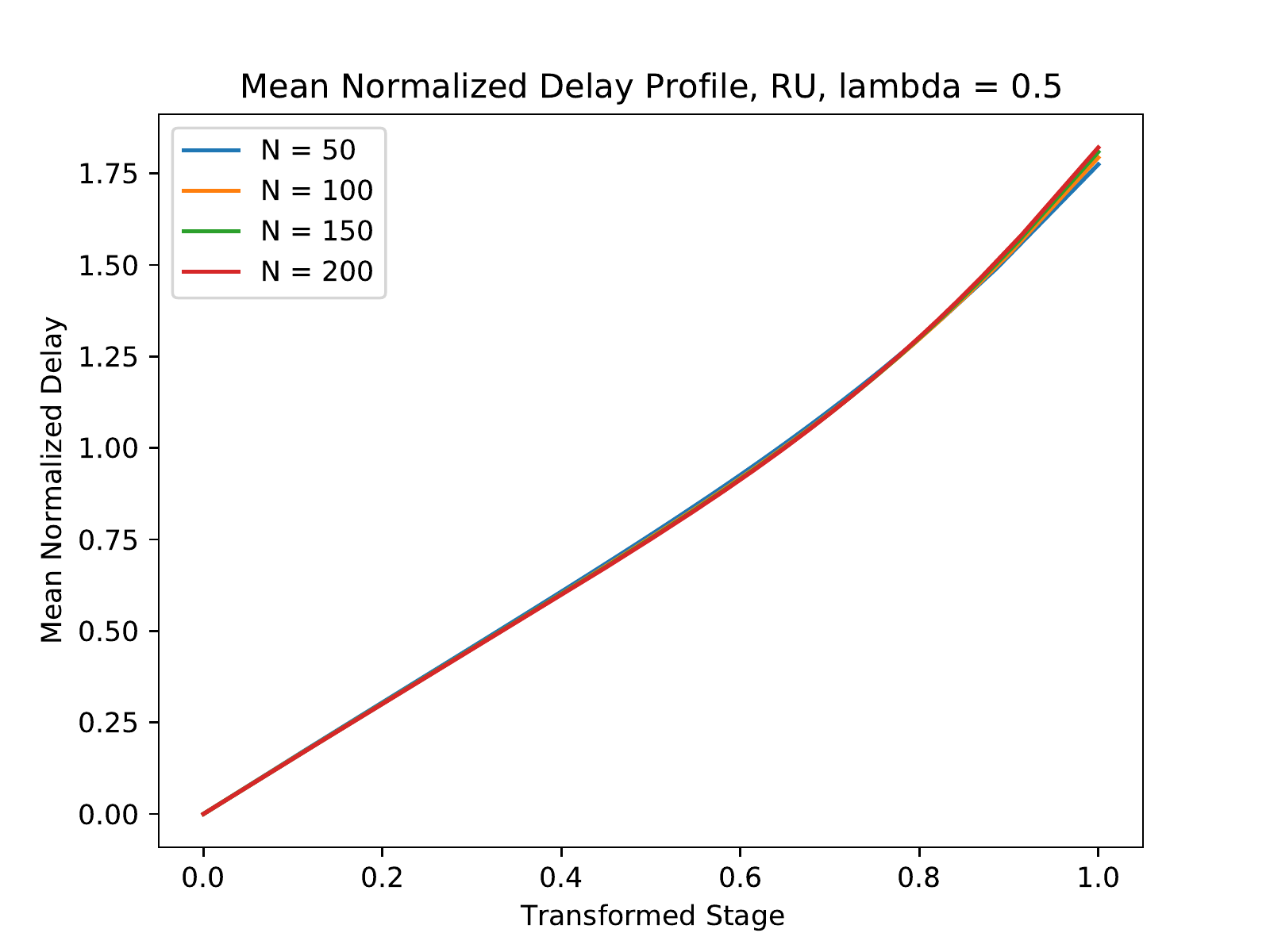}
    \end{subfigure}

    \caption{Mean normalized delay profile for the OU and RU disciplines, load 0.5. \\ (The results for loads 0.3 and 0.7 also show very little dependence on $N$.)}
    \label{fig:OU-MNDF}
\end{figure}

\section{System with ``Reshuffling''}
\label{sec:reshuffling}

To obtain some analytic insight into Conjectures~\ref{conj:slowdown-conv} and~\ref{conj:mndf-conv}, we consider the following artificial system, with ``reshuffling,'' which is more tractable than the system under the RU and OU disciplines.
In particular, for this model, we give heuristic arguments which lead to explicit expressions for the limiting slowdown and the limiting mean normalized delay profile, as $N\to\infty$.
These arguments and expressions are well-confirmed by simulation results. Just as in Section~\ref{sec:delays},
parts of the development are rigorous. Those parts are stated as as theorems, lemmas, propositions. The conjectures are stated as such. 

\subsection{Reshuffling Model}

Consider the symmetric finite-capacity system, specified at the beginning of Section~\ref{sec:delays}, with $\lambda<1$. 
Consider the following modification of this model, which we call the {\em reshuffling system}. 
Immediately before
any communication epoch, each packet in the network is removed form all the nodes it occupies, say $i$ nodes, and placed back into a subset of $i$ nodes chosen uniformly at random.
Then, upon a communication epoch on a link, a useful packet (if any), chosen uniformly at random, is transmitted (just as under RU discipline). Again, with some abuse of notion of a discipline, we sometimes refer to this a system operating under the reshuffling discipline, labeled RS.

Clearly, the evolution of the reshuffling network is described by a countable irreducible Markov chain
$Y(t)=(n(i, t), ~i = 1, \ldots, N-1)$, where $n(i, t)$ is the number of stage $i$ packets  
at time $t$.

\subsubsection{Stability of Reshuffling System}
\begin{theorem}
	For any $N$, the process describing the reshuffling system is stable (positive recurrent)  if and only if $\lambda < 1$.
\end{theorem}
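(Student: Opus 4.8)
The plan is to use a single transparent Lyapunov function --- the total residual transmission work --- together with the Foster--Lyapunov criterion, exploiting the fact that reshuffling makes the per-packet useful-link counts explicit and state-independent given the stage counts. First I would record the one structural consequence of reshuffling that drives everything: since each packet is, just before every communication epoch, relocated to a uniformly random set of its $i$ nodes (where $i$ is its current stage), a fixed link $(u,v)$ finds a given stage-$i$ packet useful with probability $p_i := i(N-i)/(N(N-1))$, independently across packets. Hence, in state $(n_i)$, the probability that a given link carries at least one useful packet is $q = 1 - \prod_{i=1}^{N-1}(1-p_i)^{n_i}$, the same for every link by symmetry. Since each of the $N(N-1)$ links communicates at rate $1$ and a productive communication advances exactly one packet by one stage --- removing exactly one unit of residual work, including the case $i=N-1\to N$ where the packet departs --- the process is governed by arrivals of work at rate $\lambda N(N-1)$ (each arrival, occurring at total rate $\lambda N$, brings $N-1$ units) and service of work at rate $N(N-1)q$.

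Define $V(Y) = \sum_{i=1}^{N-1}(N-i)\,n_i$, the total number of outstanding transmissions. The generator drift is then
\beql{eq-rs-drift}
\mathcal{L} V = N(N-1)\Big[\lambda - 1 + \prod_{i=1}^{N-1}(1-p_i)^{n_i}\Big].
\eeql
For the sufficiency direction ($\lambda<1$) the key estimate is that the product is small once $V$ is large: using $\log(1-x)\le -x$ and the elementary bound $i(N-i)\ge N-i$ (valid since $i\ge 1$), one gets $\prod_i (1-p_i)^{n_i} \le \exp(-\sum_i n_i p_i) \le \exp(-V/(N(N-1)))$. Consequently $\mathcal{L} V \le -\tfrac{1}{2}(1-\lambda)N(N-1)$ whenever $V \ge N(N-1)\log\frac{2}{1-\lambda}$. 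The complementary set $\{V < N(N-1)\log\frac{2}{1-\lambda}\}$ is finite (since $N-i\ge 1$ forces $\sum_i n_i \le V$), and all transition rates are bounded, so the Foster--Lyapunov criterion yields positive recurrence.

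For necessity I would argue from the same work balance. When $\lambda > 1$, since $q\le 1$ the drift is bounded below by $N(N-1)(\lambda-1)>0$ in every state; comparing $V(t)$ with the driving processes via $V(t) \ge V(0) + (N-1)A(t) - S(t)$, where $A(t)$ counts arrivals and $S(t)$ counts all communication epochs (a Poisson process of rate $N(N-1)$), and applying the strong law gives $V(t)/t \to N(N-1)(\lambda-1)>0$ almost surely, so the chain cannot be positive recurrent. The delicate case is $\lambda = 1$, where work arrives and can be served at equal maximal rates; here I would rule out positive recurrence through the \emph{idling} that is unavoidable at a link carrying no useful packet. At $\lambda=1$ one has $\mathcal{L} V = N(N-1)\prod_i(1-p_i)^{n_i}$, so $V(t) - V(0)$ equals a martingale plus $N(N-1)\int_0^t \prod_i(1-p_i)^{n_i(s)}\,ds$. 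Supposing for contradiction that $Y$ is positive recurrent, the ergodic theorem forces the time-average of $\prod_i(1-p_i)^{n_i}$ to converge to its stationary mean, which is strictly positive because the empty state has positive stationary probability and the product equals $1$ there; combined with $V(t)/t\to 0$ and the martingale strong law, this is a contradiction. I expect this $\lambda=1$ boundary argument --- together with the careful justification that the productive-service rate is \emph{exactly} $N(N-1)q$, with $q$ computed over the reshuffling randomness --- to be the main obstacle; the remaining steps are a routine drift computation.
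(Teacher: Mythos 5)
Your proof is correct and follows essentially the same route as the paper's: a Foster--Lyapunov argument with the total number of undelivered packet copies $V=\sum_i (N-i)n_i$ as Lyapunov function, using the reshuffling independence to show the per-link transmit probability $q=1-\prod_i(1-p_i)^{n_i}$ is near $1$ when $V$ is large, so work drains at rate close to $N(N-1)$ against arrivals at rate $\lambda N(N-1)$. The paper merely asserts necessity as ``straightforward'' and omits the quantitative details it calls routine; your explicit drift bound, the work-balance argument for $\lambda>1$, and the martingale--ergodic contradiction at the boundary case $\lambda=1$ are sound elaborations of exactly the sketch the paper has in mind.
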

\begin{proof}
The necessity of $\lambda < 1$ is straightforward. The sufficiency of $\lambda < 1$ for stability is also 
easy to obtain, using the Lyapunov-Foster stability criterion (cf. \cite{foss2004overview,Bramson-book}) with Lyapunov function being 
the total number of undelivered packet copies in the system. (Recall that a packet at stage $i$ has $N-i$ undelivered copies.)
Indeed, if the number of undelivered copies is large, then the number of packets in the system is also large;
then, the probability that a link at a transmission epoch actually transmits a packet is very close to $1$;
therefore, the average rate at which the number of undelivered copies is decreasing due to transmissions
is very close to $N(N-1)$, while that rate at which it increases due to new new arrivals is $\lambda N(N-1)$.
We omit straightforward details.
\end{proof}

\subsection{Normalized Delay Profile in the Large Reshuffling System} 
\label{sec-delay-profile}

We make the following conjecture about the behavior of a large reshuffling system in steady-state.
\begin{conj}
\label{conj:reshuf-Poisson}
Fix $\lambda<1$. For each $N$, consider the reshuffling system in steady-state. As $N\to\infty$:\\
(i) The distribution of the number of useful packets on a link at a communication epoch, converges to 
$\mathrm{Poisson}(\psi)$,
where $\psi \ge 0$ is a fixed number.\\
(ii) The numbers of useful packets at the links at consecutive communications epochs are asymptotically independent.\\
(iii) In fact, $\psi = -\log(1-\lambda)$. \\
(iv) The slowdown converges to 
$-\log(1-\lambda)/\lambda$, and moreover, for each $x\in [0,1]$,
$$
R^N_\mathrm{RS}(x) \to [-\log(1-\lambda)/\lambda] x,
$$
$$
T^N_\mathrm{RS}(x) \Rightarrow [-\log(1-\lambda)/\lambda] x.
$$
(Compare to Lemma~\ref{lem-free-delay} for the free system.)
	\end{conj}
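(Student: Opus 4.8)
The plan is to treat parts (iii) and (iv) as consequences of (i)--(ii), proving them rigorously modulo the Poisson structure, and to concentrate the real effort on (i)--(ii), which I expect to be the genuine obstacle. The organizing principle is a mean-field self-consistency: if each link sees $\mathrm{Poisson}(\psi)$ useful packets independently, then each packet's effective promotion rate is determined, which fixes the steady-state occupancies, which in turn must reproduce $\psi$; this fixed-point closes exactly at $\psi=-\log(1-\lambda)$.

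First I would record the exact utilization identity, valid for every finite $N$. In steady state each packet is promoted exactly $N-1$ times, so the total promotion rate is $\lambda N(N-1)$; since the total rate of communication epochs is $N(N-1)$ and an epoch yields a promotion iff the link carries at least one useful packet, the fraction of epochs at which a link is useful is exactly $\lambda$. By PASTA (the epochs are Poisson), $\pr\{\text{a link carries}\ge 1\text{ useful packet}\}=\lambda$ for all $N$. Granting (i), convergence of the atom at $0$ gives $1-e^{-\psi}=\lambda$, which is precisely claim (iii). Next I would obtain (iv) by a tagged-packet computation. After reshuffling, a tagged stage-$i$ packet occupies a uniform $i$-subset and is therefore useful on exactly $i(N-i)$ links; each fires at rate $1$, and at a firing the packet is transmitted with probability $1/K$, where $K$ is the number of useful packets on that link including the tag. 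Using the product-Poisson structure and the Slivnyak--Mecke property (so that following a tagged packet leaves the remaining packets Poisson with the same means, hence the other useful packets number $\mathrm{Poisson}(\psi)$), the promotion rate is $i(N-i)\,\E[1/(1+\mathrm{Poisson}(\psi))]=i(N-i)(1-e^{-\psi})/\psi=i(N-i)\,\lambda/\psi$. Thus the mean time at stage $i$ is $(\psi/\lambda)\cdot 1/(i(N-i))$, a constant factor $\psi/\lambda$ times the free-system stage time from Lemma~\ref{lem-free-slowdown}. Summing over $i$ yields slowdown $\psi/\lambda=-\log(1-\lambda)/\lambda$; since the inflation factor is stage-independent, the transformed particle moves at constant speed $\lambda/\psi$, so $R^N_{\mathrm{RS}}(x)\to(\psi/\lambda)x$ and, by the same variance-vanishing argument as in the free case, $T^N_{\mathrm{RS}}(x)\Rightarrow(\psi/\lambda)x$.

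The heart of the matter is (i)--(ii). Conditionally on the occupancy vector $(n(i))_i$, the number of useful packets on a fixed link is $\sum_i\mathrm{Bin}(n(i),p_i)$ with $p_i=i(N-i)/(N(N-1))$, the indicators independent across packets by the reshuffling. A naive Poisson approximation fails here: the Stein--Chen bound on the total variation distance is $\sum_i n(i)p_i^2$, whose mean (using $\E[n(i)]\approx N\psi/(i(N-i))$) tends to $\psi/6>0$ rather than to $0$, precisely because mid-stage packets have $p_i$ of order one. The way around this is to prove the stronger structural fact that the occupancies are asymptotically independent with $n(i)\Rightarrow\mathrm{Poisson}(\mu_i)$, $\mu_i=N\psi/(i(N-i))$; then the Poisson thinning and superposition identities make $\sum_i\mathrm{Bin}(\mathrm{Poisson}(\mu_i),p_i)$ \emph{exactly} $\mathrm{Poisson}(\sum_i\mu_i p_i)=\mathrm{Poisson}(\psi)$ in the limit, the non-smallness of $p_i$ now being harmless.

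Establishing this product-Poisson law is a propagation-of-chaos statement for a mean-field particle system: the reshuffling destroys positional correlations so that distinct packets should evolve asymptotically independently, each stage-$i$ packet promoted at effective rate $i(N-i)\lambda/\psi$, while the only global coupling --- the factor $1/K$ --- self-averages, and the resulting occupancies feed back to reproduce $\psi=-\log(1-\lambda)$. The difficulty I expect to dominate the work is exactly this coupling: the $1/K$ nonlinearity ties every packet to the aggregate state, so there is no Jackson-type product form to invoke directly, and one must control the fluctuations of $K$ well enough to upgrade a fluid/law-of-large-numbers limit to a Poisson (product-form) fluctuation statement, simultaneously yielding the decorrelation of consecutive epochs in (ii). This gap between the self-consistent mean-field heuristic and a rigorous chaos-propagation proof is the reason (i)--(ii) are stated as a conjecture.
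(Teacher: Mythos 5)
Your proposal is correct as a heuristic and, appropriately, stops exactly where the paper stops: the statement is a conjecture, and the paper itself offers only motivating arguments (its Remark~\ref{poisson-conj-remark} explicitly flags that a rigorous proof is open). Your logical skeleton matches the paper's: the exact per-link utilization identity $\pr\{\text{link carries} \ge 1 \text{ useful packet}\}=\lambda$ forcing $1-e^{-\psi}=\lambda$ for (iii); the tagged-packet computation $\E[1/(1+A)]=(1-e^{-\psi})/\psi$ with $A\sim\mathrm{Poisson}(\psi)$ giving slowdown $\psi/\lambda$; and the stage-independence of the inflation factor giving the linear profile in (iv). Where you genuinely go beyond the paper: (a) you quantify the obstacle to (i) via the conditional Stein--Chen bound $\sum_i \E[n(i)]\,p_i^2 \to \psi/6$, whereas the paper's remark only observes that $p_i$ is near $1/4$ for mid stages and appeals to the scarcity of mid-stage packets; (b) your proposed reduction of (i) to asymptotically independent Poisson occupancies $n(i)\Rightarrow \mathrm{Poisson}\bigl(N\psi/(i(N-i))\bigr)$, after which the mixed binomial count is \emph{exactly} Poisson by thinning and superposition (and indeed $\sum_i \mu_i p_i = \sum_i \psi/(N-1)=\psi$, the same computation the paper performs in Appendix~\ref{sec-profile-dynamics} when showing $H=\xi$), is a cleaner structural target than the paper's route, which supports the conjecture through the stage-profile/hydrodynamic model and Theorem~\ref{th-hydro}. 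Two caveats: a non-vanishing Stein--Chen \emph{upper bound} does not by itself show the naive approximation ``fails'' --- as your own fix demonstrates, Poissonicity of the occupancies makes the approximation exact despite the bound --- so that sentence should be softened; and your Slivnyak--Mecke step quietly assumes the limiting joint Poisson structure, i.e., part of what is being conjectured, but the paper's own heuristic (``the number of \emph{other} useful packets still has distribution $\mathrm{Poisson}(\psi)$'') does precisely the same, so this is not a gap relative to the paper.
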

	
The motivation and simulation evidence for this conjecture are as follows. 	

Based on the free system, which in terms of the total number of packets is a lower bound of the reshuffling (or any other) system, the typical number of packets in the system with large $N$ is large, of the order $\log N$. If we consider a communication epoch at a link, then, due to reshuffling, the events of different packets being useful at (``competing for'') this link are independent, with a packet at stage $i$ becoming useful with a ``typically'' small probability $i(N-i) / [N(N-1)]$. Thus, the number of useful packets is a sum of a large number of independent binary random variables, each having value $1$ with a small probability. This naturally leads to Conjecture~\ref{conj:reshuf-Poisson}(i), which is also supported by simulation experiments (see Figure~\ref{fig:useful-packets-distn} in Appendix~\ref{sec-supporting-simulation}). 

The Conjecture~\ref{conj:reshuf-Poisson}(ii) is very natural due to the argument just above and the fact that 
the distribution of the number of useful packets at a communication epoch depends only on the process state 
$(n(i, t), ~i = 1, \ldots, N-1)$, which ``does not change much'' over a finite sequence of consecutive epochs.

Now, the value $\psi = -\log(1-\lambda)$ (Conjecture~\ref{conj:reshuf-Poisson}(iii)) is obtained as follows. 
The probability that any link at its communication epoch will actually transmit a packet is $1-e^{-\psi}$; but this has to be equal $\lambda$, which is the steady-state utilization. Therefore, $\lambda=1-e^{-\psi},$ which can be solved for $\psi$.
Table~\ref{tab:poisson-parameters} provides simulation evidence that, when $N$ is large, 
$\psi$ is indeed close to $-\log(1-\lambda)$, as well as additional evidence that the distribution of the number 
of useful packets is indeed close to Poisson (Conjecture~\ref{conj:reshuf-Poisson}(i)).

To justify Conjecture~\ref{conj:reshuf-Poisson}(iv), consider 
a packet, which happens to be useful at a link at its communication epoch. Since this packet is useful with only a small probability, independently of other packets, we conclude that the number of {\em other} useful packets at this link
still has the distribution $\mathrm{Poisson}(\psi)$. If so, 
the probability that our fixed packet actually gets transmitted is
$\E 1/(1+A),$
where $A$ has distribution $\mathrm{Poisson}(\psi)$. This expectation is 
\beql{eq-speed-density}
v := \E \frac{1}{1+A}=\frac{1-e^{-\psi}}{\psi}.
\eeql
Note that in the free system a packet which is useful on a link is certainly transmitted at its communication epoch.
We conclude that in the reshuffling system in steady-state a packet propagates $1/v= \psi/[1-e^{-\psi}]= -\log(1-\lambda)/\lambda$ times ``slower'' than in the free system, which is the limiting slowdown (as $N$ becomes large). 
Finally, the fact that, as $N\to\infty$, the mean normalized delay profile becomes linear,  
$[-\log(1-\lambda)/\lambda]x, ~0\le x \le 1$, is natural to expect, because the random number of other packets,
competing with a given packet on a link, should not depend on the stage of the packet. This completes the informal 
motivation for Conjecture~\ref{conj:reshuf-Poisson}(iv). Figure~\ref{fig:RS-MNDF} provides strong simulation evidence 
for it. (Conjecture~\ref{conj-rs-dynamics} in Appendix~\ref{sec-profile-dynamics}  gives an additional intuition for Conjecture~\ref{conj:reshuf-Poisson}(iv), based on the system dynamics over time.)

We note that the expression for the (limiting) slowdown, which we obtained above, can be written as
 $1/v=\psi/\lambda$ (where we used relation $\lambda=1-e^{-\psi}$). This expression can also be obtained as follows.
 In the free system, the expected number of useful packets on a link at its communication epoch is exactly $\lambda$
 (because this is the average number of packets transmitted by a link at its communication epoch, in steady-state). In the reshuffling system the packets move ``slower,'' which results in the larger number $\psi$ of useful packets at a link in steady-state; hence the (limiting) slowdown must be the ratio $\psi/\lambda$.
 
 \begin{rem}
 \label{poisson-conj-remark}
 Even though Conjecture~\ref{conj:reshuf-Poisson} is very intuitive and well supported by simulations, proving it formally poses significant challenges. (Same is true for a related Conjecture~\ref{conj-reshuf-location} below.) For example, the intuition leading to 
 the key Conjecture~\ref{conj:reshuf-Poisson}(i) is based on the assumption that the
 probability $i(N-i) / [N(N-1)]$ of a packet at stage $i$ becoming useful 
 on a link at its communication epoch is ``typically small.'' This is not true for a packet at any stage; say, if stage $i$ is such that $i-N/2=O(1)$, this probability is close to $1/4$. This is ``mitigated'' by the fact that the total number of packets in the network ``should be'' $O(\log N)$, i.e. much smaller than $N$, and therefore at a given communication epoch the probability of having packets at stages $i$ to be ``close to'' $N/2$ is likely to be small. (Also,
 packets at stages close to $N/2$ propagate along the stages faster.) 
 However, making this kind of argument formal is challenging. Proving Conjectures~\ref{conj:reshuf-Poisson} 
 and \ref{conj-reshuf-location} may be a subject of future work.
 \end{rem}

\begin{table}
\centering	
\begin{tabular}[]{| c || c | c | c | c | c |}
	\hline
	 & $-\log(1 - \lambda)$ & MLE, $N = 50$ & MLE, $N = 100$ & MLE, $N = 150$ & MLE, $N = 200$ \\
	\hline
	$\lambda = 0.3$ & $0.375$ & $0.381$ & $0.372$ & $0.368$ & $0.369$ \\
	$\lambda = 0.5$ & $0.693$ & $0.715$ & $0.706$ & $0.706$ & $0.701$ \\
	$\lambda = 0.7$ & $1.204$ & $1.253$ & $1.245$ & $1.240$ & $1.238$ \\
	\hline
\end{tabular}
\caption{The predicted mean $\psi=-\log(1-\lambda)$ for the number of useful packets on a link, according to Conjecture~\ref{conj:reshuf-Poisson}(iii), and the maximum likelihood estimate (MLE) for the same mean based on simulation
and Conjecture~\ref{conj:reshuf-Poisson}(i).} 
\label{tab:poisson-parameters}
\end{table}

\begin{figure}
\centering
        \includegraphics[scale=0.48]{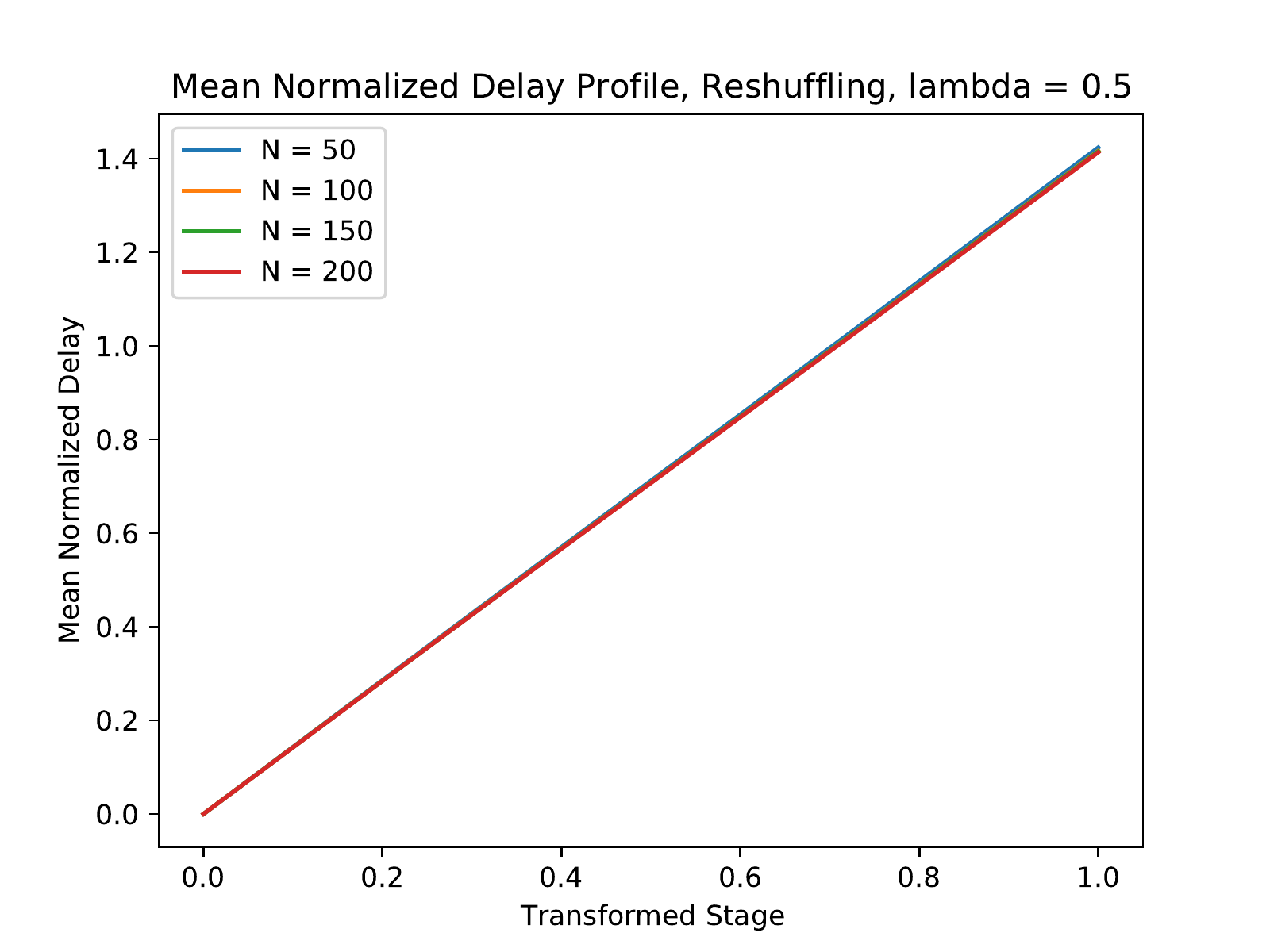}
    \caption{Simulated mean normalized delay profile for the Reshuffling system, load 0.5. (The results for loads 0.3 and 0.7 also show essentially a linear function, with very little dependence on $N$.)}
    \label{fig:RS-MNDF}
\end{figure}

\subsection{Particle stage profile in the large reshuffling system}
\label{sec-particle-distr} 

We now consider the question of the distribution of the particle stages in the system. 
Notice that by Little's law, $(\lambda N) B_N R^N_\mathrm{free}(1) = \lambda A_N$ 
is the expected number of particles in the free system in steady-state. When we consider the numbers 
of particles a transformed system, 
it will be convenient to rescale (divide) them by $A_N$, so that each particle has ``weight'' $1/A_N$ and the total  ``mass'' of all particles in the system is of order $1$. Correspondingly, let us denote by $\phi^N_D(x,t)$, $x \in [0,1]$, 
$t \ge 0$, the quantity of particles in the transformed system, located in $[0,x]$ at time $t$, under discipline $D$. (In particular, $D$ = free and $D$ = RS, for the free and reshuffling disciplines, respectively.) We call $\phi^N_D(\cdot,t)$ the stage profile at time $t$,
and denote by $\phi^N_D(\cdot,\infty)$ the steady-state stage profile. Note that,
due to the time and particle quantity rescaling, the rate at which the particle mass arrives into a transformed system is $(\lambda N) (1/A_N) B_N = \lambda$.

Then, for the free system, we have the following property, which is essentially an alternative form of Lemma~\ref{lem-free-delay}.

\begin{lemma} As $N\to\infty$, 
$$
\sup_{0 \le x\le 1} |\phi^N_{free}(x,\infty) - \lambda x| \Rightarrow 0.
$$
\end{lemma}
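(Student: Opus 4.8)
The plan is to obtain the mean profile exactly from Little's law (this is the sense in which the statement is ``an alternative form'' of Lemma~\ref{lem-free-delay}), then to promote the mean to a law of large numbers through a variance estimate, and finally to upgrade pointwise convergence to uniform convergence using monotonicity. First I would compute $\E[\phi^N_{free}(x,\infty)]$. In the free system packets do not interfere, so the sojourn of each packet is unaffected by the presence of others: every packet enters stage $1$ at its source and then advances, spending mean time $1/[i(N-i)]$ at stage $i$. Fix $x$ and let $k=k(x)$ be the largest index with $\gamma^N_{k-1}\le x$. Applying Little's law to the sub-population of packets currently located in $[0,x]$ (equivalently, at stages $1,\dots,k$), the entry rate into this sub-population is the total source-arrival rate $\lambda N$ and the mean residence time in it is $\sum_{i=1}^{k}1/[i(N-i)]$. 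After the particle-mass rescaling by $1/A_N$ and the time rescaling by $B_N$, this gives $\E[\phi^N_{free}(x,\infty)]=\lambda\,\gamma^N_{k(x)}$, which coincides with $\lambda R^N_\mathrm{free}(x)$ up to one vanishing mesh-gap and hence, by Lemma~\ref{lem-free-delay}, converges to $\lambda x$; the convergence is uniform in $x$ because the mean is monotone in $x$ and the mesh $\max_i(\gamma^N_i-\gamma^N_{i-1})\to 0$ (the largest gaps, at the extreme stages, are $O(1/\log N)$).

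Next I would prove concentration at each fixed $x$, i.e.\ $\var\big(\phi^N_{free}(x,\infty)\big)\to 0$. Writing $A_N\,\phi^N_{free}(x,\infty)=\sum_p \ind[\,p\text{ located in }[0,x]\,]$, summed over packets present, the diagonal part of the second moment is at most $\E[\text{number of packets}]=\Theta(\log N)=O(A_N)$, which contributes $O(1/A_N)\to 0$ after division by $A_N^2$. The essential point, and the main obstacle, is to control the sum of pairwise covariances $\sum_{p\ne q}\cov(\cdots)$. Two distinct packets affect one another only through \emph{simultaneous} transmissions on a shared frontier link (a link from a commonly-held node to a node missing from both); for large $N$ such coincidences are rare, since typical packets occupy ``spread-out'' node-sets and same-source packets coalesce only at rate $O(\lambda/N)$, while packets whose trajectories never coalesce are asymptotically uncorrelated. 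Quantifying this so as to show $\sum_{p\ne q}\cov(\cdots)=o(A_N^2)=o((\log N)^2)$ is the technical heart of the argument; granting it, $\var\big(\phi^N_{free}(x,\infty)\big)\to 0$, so $\phi^N_{free}(x,\infty)\to\lambda x$ in probability for each fixed $x$. (I note that an exact $M/G/\infty$ Poisson decomposition does \emph{not} hold here: already for $N=2$ same-source packets are flushed in bulk and the stage counts are over-dispersed, so one genuinely needs the large-$N$ decorrelation rather than exact independence.)

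Finally I would pass to the uniform statement by a P\'olya-type (monotone-convergence) argument. For each $N$ the function $\phi^N_{free}(\cdot,\infty)$ is non-decreasing, and the limit $\lambda x$ is continuous and increasing on $[0,1]$. Fixing a grid $0=x_0<\cdots<x_m=1$ with $\max_j(x_{j+1}-x_j)<\eta$, monotonicity sandwiches $\phi^N_{free}(x,\infty)$ between its values at the neighbouring grid points, whence $\sup_{0\le x\le 1}\big|\phi^N_{free}(x,\infty)-\lambda x\big|\le \max_{0\le j\le m}\big|\phi^N_{free}(x_j,\infty)-\lambda x_j\big|+\lambda\eta$. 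The maximum is over finitely many points, at each of which convergence in probability holds by the previous step, so it tends to $0$ in probability; letting $\eta\to0$ yields $\sup_{0\le x\le1}|\phi^N_{free}(x,\infty)-\lambda x|\Rightarrow 0$. Of the three steps, the mean computation and the monotonicity upgrade are routine, and essentially all the difficulty is concentrated in the covariance bound of the second step, where one must make precise that in the large-$N$ free system distinct packets decorrelate despite sharing the same driving link-transmission processes.
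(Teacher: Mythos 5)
Your steps (1) and (3) — the Little's-law identification of the mean profile and the monotonicity (P\'olya) upgrade from pointwise to uniform convergence — are correct and routine, as you say. The genuine gap is that your step (2), the covariance bound $\sum_{p\ne q}\cov(\cdot)=o(A_N^2)$, is the entire substance of your argument and is only asserted (``granting it''), not proved. Moreover the heuristic you offer for it is shaky: it is not true that shared-link coincidences are uniformly ``rare.'' If two packets both sit at stages of order $N/2$, with node sets $S_1,S_2$ of sizes $i,j$, the links useful to both number about $\tfrac{ij}{N}\bigl(N-|S_1\cup S_2|\bigr)$, a constant fraction of each packet's own useful links; at such moments their advancement processes share a non-vanishing fraction of jumps. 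Whether the accumulated covariance still vanishes hinges on the short occupation times $1/[i(N-i)]$ of the middle stages, and turning that into a bound of order $o((\log N)^2)$ is genuinely delicate work, not a routine estimate. As written, your proposal establishes the mean profile and nothing more.

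The paper treats this lemma as an easy consequence of Lemma~\ref{lem-free-delay} (``mass arrives at rate $\lambda$ and particles move at speed $1$''), and indeed there is a first-moment proof that bypasses cross-packet dependence entirely — this is the route you should take instead of the variance computation. Fix $x$ and $\eps>0$, and let $\Theta$ denote a single packet's (unscaled) time to cross point $x$; its stage durations are independent exponentials, so $\E\Theta=B_N\gamma^N_{k(x)}\approx xB_N$ and $\var\Theta=\sum_{i\le k}[i(N-i)]^{-2}=O(N^{-2})=o(B_N^2)$, whence $\delta_N:=\pr\{|\Theta-xB_N|>\eps B_N\}=O(1/(\log N)^2)\to 0$ by Chebyshev. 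In the stationary free system, a packet of age $s$ lies in $[0,x]$ iff $\Theta>s$, so the count $A_N\phi^N_{free}(x,\infty)$ is sandwiched between the Poisson number of arrivals in the last $(x-\eps)B_N$ time units minus the deviant packets in that window, and the Poisson number of arrivals in the last $(x+\eps)B_N$ units plus the stragglers of age exceeding $(x+\eps)B_N$ still left of $x$. The two Poisson counts have means $\lambda(x\mp\eps)A_N\to\infty$ and hence concentrate after division by $A_N$; the deviant correction has mean $\lambda(x+\eps)A_N\delta_N=o(A_N)$ by Campbell's formula, and the straggler correction has mean $\lambda N\,\E[(\Theta-(x+\eps)B_N)^+]\le \lambda N\sqrt{\var\Theta}=O(1)=o(A_N)$. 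Markov's inequality then kills both corrections in probability, and — this is the point — only first moments of single-packet functionals are used, so the dependence between packets induced by the shared link clocks (which, as you correctly note, rules out an exact $M/G/\infty$ decomposition) is simply irrelevant. This gives $\phi^N_{free}(x,\infty)\to\lambda x$ in probability for each fixed $x$, after which your monotonicity step finishes the uniform statement exactly as you wrote it.
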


This lemma is intuitive, because the new particle mass arrives at rate $\lambda$, and the particles move independently, essentially at the constant rate $1$ when $N$ is large. The formal proof is also straightforward.

Recall that Conjecture~\ref{conj:reshuf-Poisson}(iv) basically states that particles in a large system with reshuffling,
in steady-state, 
move $-\log(1-\lambda)/\lambda$ times slower than in the free system. Then,
our conjecture about the stage profile, which corresponds to Conjecture~\ref{conj:reshuf-Poisson}(iv), is the following

\begin{conj}
\label{conj-reshuf-location}
Let $\lambda<1$ be fixed. For each $N$, consider the reshuffling system in steady-state. As $N\to\infty$, $\phi^N_{RS}(\cdot,\infty)$ converges (in probability) to the deterministic linear function
$\psi x, ~0\le x \le 1$, where $\psi=-\log(1-\lambda)$.
	\end{conj}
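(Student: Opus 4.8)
The plan is to exhibit Conjecture~\ref{conj-reshuf-location} as a near-immediate consequence of Conjecture~\ref{conj:reshuf-Poisson}(iv), so that the only genuinely new analytic content required is a concentration statement; the two conjectures are in effect Little's-law duals of each other. First I would record an exact, finite-$N$ bridge between the stage profile and the delay profile. In the transformed system, particle mass enters only at the point $0$, at rate $(\lambda N)(1/A_N)B_N = \lambda$, and a tagged particle occupies the window $[0,x]$ for exactly the first-passage time $T^N_\mathrm{RS}(x)$, whose mean is $R^N_\mathrm{RS}(x)$. Applying Little's law to the region $[0,x]$ gives, for every $N$ and every $x\in[0,1]$,
\begin{equation}
\E[\phi^N_\mathrm{RS}(x,\infty)] = \lambda\, R^N_\mathrm{RS}(x).
\end{equation}
Granting Conjecture~\ref{conj:reshuf-Poisson}(iv), namely $R^N_\mathrm{RS}(x)\to(\psi/\lambda)x$ with $\psi=-\log(1-\lambda)$, this yields $\E[\phi^N_\mathrm{RS}(x,\infty)]\to \psi x$ for each fixed $x$. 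Thus the \emph{mean} profile already converges to the asserted linear limit, and what remains is purely a law-of-large-numbers statement for the random profile around its mean.

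Second, to upgrade from the mean to convergence in probability, I would exploit a feature peculiar to reshuffling: the per-epoch competition at a link is a purely global quantity, so a useful packet is selected with probability $\E[1/(1+A)]$ \emph{independently of its stage}, where $A$ is the number of competing useful packets. Hence every particle is advanced by a single, spatially uniform (but time-fluctuating) speed $v_N(t)$ depending only on the instantaneous number of useful packets on the chosen link. The natural formalization is a hydrodynamic limit: show tightness of the profile process $\{\phi^N_\mathrm{RS}(\cdot,t)\}$ and identify each limit point as the deterministic solution $\rho$ of the transport equation $\partial_t\rho + v\,\partial_x\rho = 0$ with boundary influx $\lambda$ at $x=0$ (so $v\rho(0,\cdot)=\lambda$), whose unique steady state is the constant density $\rho\equiv\lambda/v=\psi$, i.e. $\phi(x)=\psi x$, consistently with Lemma~\ref{lem-free-delay} in the free case. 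The uniform mass control needed for tightness, and for passing from almost-sure limits to the steady-state statement, would be supplied by the Foster--Lyapunov bound on the total number of undelivered copies used to prove stability of the reshuffling system; the limiting profile is then read off by sending $t\to\infty$ and invoking uniqueness for the transport equation.

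The crux, and the step I expect to be the main obstacle, is the self-averaging of the driving speed $v_N(t)$, equivalently the concentration of the number of useful packets on a link around its mean $\psi$; this is precisely Conjecture~\ref{conj:reshuf-Poisson}(i),(iii). As Remark~\ref{poisson-conj-remark} stresses, it is delicate because a packet at a stage $i$ with $i-N/2=O(1)$ is useful with probability near $1/4$, so the summands defining the useful-packet count are not uniformly small and may be correlated. A direct computation shows each stage contributes expected mass $\psi/(N-1)$ to the count, so the difficulty is entirely in controlling the event that several of the fast, middle stages are simultaneously occupied. I would try to break this by bootstrapping from a crude a priori bound on the steady-state packet count to the sharp $O(\log N)$ bound (the scaling underlying Conjecture~\ref{conj:slowdown-conv}), and by combining it with Proposition~\ref{prop:pi-to-half-1}, which localizes almost all of the transformed coordinate near $x=1/2$ and thereby signals that the middle stages are traversed quickly and hold little mass at any instant. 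The estimates for occupancy, self-averaging, and the limiting profile are mutually coupled, and closing this self-consistency loop—pinning $\lambda=1-e^{-\psi}$ while simultaneously bounding the second moment of the useful-packet count—is exactly where the difficulties flagged by the authors reside and is the part that would demand the most work.
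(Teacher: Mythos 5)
First, note what the target statement is: Conjecture~\ref{conj-reshuf-location} is stated in the paper as a \emph{conjecture}, and the paper offers no proof of it --- only a heuristic motivation (Section~\ref{sec-particle-distr}), a rigorously defined hydrodynamic model whose deterministic dynamics provably relaxes to the constant density $\psi$ (Theorem~\ref{th-hydro} in Appendix~\ref{sec-profile-dynamics}), a companion conjecture that the stochastic profile process converges to that hydrodynamic model (Conjecture~\ref{conj-rs-dynamics}), and simulation evidence. Your proposal does not prove the statement either: it is a conditional reduction, and you are commendably explicit about this. Measured against the paper, your plan essentially reconstructs the authors' own program. Your Little's-law identity $\E[\phi^N_\mathrm{RS}(x,\infty)] = \lambda\, R^N_\mathrm{RS}(x)$ is correct (particles move monotonically right, so each passes through $[0,x]$ once, and the mass influx rate is $\lambda$); the paper uses exactly this duality for the free system. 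Your transport-equation picture --- a spatially uniform, time-varying speed driven by the global useful-packet count, with boundary influx $\lambda$ and unique constant steady state $\psi=\lambda/v$ --- is the paper's hydrodynamic model, where the speed is $v(\xi(1/2,t))$ and Theorem~\ref{th-hydro} is precisely your ``steady state of the transport dynamics'' step. So where your argument is rigorous it duplicates what the paper already proves, and where it is not, it leaves open exactly what the paper leaves open.

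The genuine gap, concretely, is twofold. (1) Taking Conjecture~\ref{conj:reshuf-Poisson}(iv) as an input is partially circular: in the paper, (iv) is itself derived heuristically from the Poisson/concentration structure (i)--(iii), and (iv) and Conjecture~\ref{conj-reshuf-location} sit at the same conjectural level (the paper presents them as two faces of the same statement). So your claim that, granting (iv), ``what remains is purely a law-of-large-numbers statement'' understates the problem: even the mean-profile convergence in your first step is conditional on an unproven conjecture. (2) The upgrade from means to convergence in probability is not a routine LLN. The profile is an aggregate over $O(\log N)$ correlated particles whose speeds are all driven by the \emph{same} fluctuating useful-packet count, so concentration of the profile requires exactly the self-averaging you flag: concentration of the per-epoch competition around $\psi$, together with tightness and identification of limit points of $\phi^N_\mathrm{RS}(\cdot,\cdot)$ --- i.e., Conjecture~\ref{conj:reshuf-Poisson}(i),(iii) plus Conjecture~\ref{conj-rs-dynamics}. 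As Remark~\ref{poisson-conj-remark} explains, the obstruction is that packets near stage $N/2$ are useful with probability near $1/4$, so the useful-packet count is not a sum of uniformly small terms, and controlling the occupancy of the middle stages requires the sharp $O(\log N)$ population bound, which is itself coupled to the limiting profile one is trying to establish. Your proposed bootstrap (crude a priori bound $\to$ $O(\log N)$ $\to$ self-averaging $\to$ profile) names the right self-consistency loop but supplies no mechanism for closing it; note also that your final ``send $t\to\infty$ and invoke uniqueness'' step requires an interchange of the $N\to\infty$ and $t\to\infty$ limits, for which the Foster--Lyapunov bound gives stability but not the uniform-in-$t$ control needed to pin the steady states. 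In short: the structure of your plan matches the paper's heuristic development, but the statement remains unproven under your proposal for the same reasons it remains a conjecture in the paper.
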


If this conjecture is correct, then the parameter $\psi=-\log(1-\lambda)$ of the Poisson distribution, which we found earlier
from the relation $\lambda=1-e^{-\psi},$ is equal the constant density of the particle mass in the transformed 
system with reshuffling in steady-state, when $N$ is large. Then, \eqn{eq-speed-density} can be viewed as the mapping taking the (constant) density $\psi$ of the particle mass into the speed $v$ at which the particles move.

To further substantiate Conjecture~\ref{conj-reshuf-location}, one can study the dynamics of the stage profile 
$\phi^N_{RS}(\cdot,t)$ over time $t$. This analysis is given in Appendix~\ref{sec-profile-dynamics}, in which we
provide a heuristic argument leading to Conjecture~\ref{conj-rs-dynamics} (Appendix~\ref{sec-profile-dynamics}), 
stating that, when $N$ is large, this dynamics becomes deterministic, and we specify this dynamics. Furthermore, we rigorously show (in Theorem~\ref{th-hydro} in Appendix~\ref{sec-profile-dynamics}) that the specified dynamics  is such that 
the stage profile converges to the linear function, given in Conjecture~\ref{conj-reshuf-location}. In Appendix~\ref{sec-profile-dynamics} we also describe a simulation experiment, supporting 
the validity of Conjecture~\ref{conj-rs-dynamics} (Appendix~\ref{sec-profile-dynamics}).

\section{Conclusions}
\label{sec-conclusions}

For the model of data flows' dissemination over a network, we study the questions of stability and packet propagation delays,
under several communication disciplines and different network structures.

Our stability results prove, in particular, that while the Random-Useful discipline always provides maximum stability, 
a very natural Oldest-Useful discipline does not. For an important special case of the symmetric system 
we prove that Oldest-Useful does achieve maximum capacity.

For the issue of packet propagation delays, we put forward several intriguing conjectures, and support them by heuristic arguments and simulations. Most importantly, we conjecture that, in the symmetric network, 
as its size $N\to\infty$, the mean normalized delay profile under
the Oldest-Useful or Random-Useful discipline converges to a fixed continuous function (depending on the discipline).
In particular, this means that the ratio of the mean steady-state packet sojourn time to that in the free system (where 
a packet has the entire network just for itself) remains bounded as $N\to\infty$. 
More formal analysis of the mean normalized delay profile under Oldest-Useful, Random-Useful and possibly other disciplines,  is an interesting subject for future research.

	\bibliographystyle{plain}

\appendix

\section{Proof of Theorem~\ref{th-ru-generalization}}
\label{sec-ru-gen-proof}

The proof is a relatively minor adjustment of the proof of theorem 3 in \cite{massoulie2008rate}. 
For this reason we do not give a self-contained proof here, but rather specify the required changes
in that proof. This means that an interested reader will have to read the proof of theorem 3 in \cite{massoulie2008rate},
and then ``apply'' the adjustments that we now specify.

The proof of theorem 3 in \cite{massoulie2008rate} uses the fluid limit technique, which is also discussed and used in our  Section \ref{sec:oldest-first}.
The following key estimate is derived in that proof: 
$$
\frac{\mathrm{d}}{\mathrm{d}t}y_{\subseteq S^*} \le \lambda_s - \left[ \sum_{\substack{u \in S^* \\ v \not\in S^*}}c_{uv} - (\max_{e \in E}c_e|E|2^K)\alpha \right].
$$
Here the left-hand side is the derivative of the ``amount of fluid" that originated in the subset of nodes $S^*$, containing the single source node $s$,
but has not ``left the subset $S^*$ yet,'' i.e. not present in any node outside $S^*$.
The right-hand side is equal to the rate at which such fluid arrives (equal to $\lambda_s$, since $s$ is the single source) minus 
the lower bound (the term in brackets) on the rate at which such fluid leaves subset $S^*$.
In the lower bound (the term in brackets): $\sum_{\substack{u \in S^* \\ v \not\in S^*}}c_{uv}$ is the capacity of the graph cut from 
$S^*$ to $V \setminus S^*$; $(\max_{e \in E}c_e|E|2^K)$ is a constant depending only on the model parameters,
and $\alpha>0$ can be chosen arbitrarily small. It can be checked 
that the same lower bound holds as is for a multi-source network as well. Consequently,  in our multi-source case, the 
above derivative estimate still holds if we replace the fluid arrival rate $\lambda_s$ in the right-hand side by $\sum_{u\in S^*} \lambda_u$:
$$
\frac{\mathrm{d}}{\mathrm{d}t}y_{\subseteq S^*} \le \sum_{u\in S^*} \lambda_u - \left[ \sum_{\substack{u \in S^* \\ v \not\in S^*}}c_{uv} - (\max_{e \in E}c_e|E|2^K)\alpha \right].
$$
Given this, the rest of the proof of theorem 3 in \cite{massoulie2008rate} applies, if we use \eqn{eq-cut-cond} (with $S = S^*$)
instead of $\lambda_s < \sum_{\substack{u \in S^* \\ v \not\in S^*}}c_{uv}$.

\section{Dynamics of the stage profile in the large reshuffling system}
\label{sec-profile-dynamics}

\subsection{Hydrodynamic model definition and convergence. Main conjecture.}

Suppose that $N$ is very large and a stage-profile $\phi^N_{RS}(\cdot)$
is close to a fixed linear stage-profile $\xi x, ~0\le x \le 1$, where $\xi >0$ is a constant density. Let us see what is the corresponding average number $H$ of useful packets, competing for a link at its communication epoch. Based on the discussion following Conjecture~\ref{conj-reshuf-location}, we should expect that $H=\xi$. Let us see if this is the case. Consider a pre-limit system with large $N$. Recall that the point $\gamma^N_{i-1}$ in the transformed system corresponds to stage $i=1,2,\ldots, N-1$
in the original system, and the length of $[\gamma^N_{i-1},\gamma^N_{i})$ interval is given in \eqn{eq-int-length}.
Then the expected number of stage $i$ packets in the original system is approximately
$$
\xi \frac{1}{B_N} \frac{1}{i(N-i)} A_N.
$$
Each stage $i$ packet competes for given link with probability $[i(N-i)]/[N(N-1)]$. Therefore, the expected number of stage $i$ packets competing for the link is
$$
\xi \frac{1}{B_N} \frac{1}{i(N-i)} A_N \frac{i(N-i)}{N(N-1)} =\xi \frac{1}{N-1}.
$$
We see that the contribution of each stage $i$ (in the original system) into $H$ is equal to $\xi/(N-1)$,
and therefore $H=\xi$, as we expected. Note that, given the density $\xi\ge 0$, the corresponding average speed
of any particle is given by the function
\beql{eq-speed-density2}
v(\xi)= \frac{1-e^{-\xi}}{\xi},
\eeql
which is just \eqn{eq-speed-density} with $\psi$ replaced by $\xi$. We adopt the convention that, in \eqn{eq-speed-density}, $v(0)=1$. 

Now, fix a small $\epsilon >0$ and denote by $H(\epsilon)$ the contribution into $H$ of those stages $i$ 
for which $\gamma^N_{i-1} \in (1/2-\epsilon,1/2 + \epsilon)$, i.e. with corresponding locations $\gamma^N_{i-1}$ in the transformed system lying in the small interval $(1/2-\epsilon,1/2 + \epsilon)$. 
Since the contributions of all stages are equal, namely $\xi/(N-1)$, by using Proposition~\ref{prop:pi-to-half-1} we conclude that,
as $N\to\infty$, $H(\epsilon)/H \to 1$. Then, when $N$ is large, the value $H$ is determined by 
by the stages located in the transformed system in a small neighborhood of the middle point $1/2$. In other words, 
``only the density at the middle point $1/2$ determines $H$.''

From here it is easy to conclude that, when $N$ is large, if we have a stage-profile $\phi^N_{RS}(\cdot)$
close to a deterministic stage-profile $\phi(\cdot)$ with any (not necessarily constant) bounded density 
$\xi(x) = \phi'(x), ~0\le x \le 1,$
then, when $N$ is large, the average number $H$ of useful packets, competing for a link at its communication epoch
is equal to $\xi(1/2)$. This, in turn, implies that the instantaneous speed of particles in the transformed system
is 
\beql{eq-speed-density3}
v(\xi(1/2))= \frac{1-e^{-\xi(1/2)}}{\xi(1/2)}.
\eeql

We are now in position to describe the dynamics of the deterministic stage-profile $\phi(\cdot,t)$,
approximating the dynamics of $\phi^N_{RS}(\cdot,t)$ in the reshuffling system when $N$ is large.
Let $\xi(x,t) = (\partial /\partial x) \phi(x,t)$ denote the density of $\phi(\cdot,t)$. 
The dynamics is such that the particle mass, at any point $x\in [0,1)$,
is shifted to the right at the instantaneous speed $v(\xi(1/2,t))$. Given that the new particle mass arrives at point $0$
at the constant rate $\lambda$, the density $\xi(0,t)$ that ``appears'' at the left boundary point $0$ must be $\lambda/v(\xi(1/2,t))$. 
Consider the mapping
$$
M(u) = \frac{\lambda}{v(u)} = \frac{\lambda u}{1-e^{-u}}, ~~u\ge 0,
$$
which takes $\xi(1/2,t)$ into $\xi(0,t)$. Recall that we consider $\lambda<1$. It is easily checked that: mapping $M(\cdot)$ is continuous increasing; it has unique fixed point $\psi=-\log(1-\lambda)$; a sequence of iterations $u_{n+1} = M(u_n)$, $n=0,1,\ldots$, is monotone strictly increasing [resp., strictly decreasing], converging to $\psi$ when $u_0 < \psi$ [resp., $u_n>\psi$].

Observe that the following relation holds for any $t$ and any $x\le 1/2$:
\beql{eq-one-half}
\xi(x, t) = M(\xi(x+1/2,t))= \frac{\lambda}{v(\xi(x+1/2,t))}.
\eeql
Given this special structure of $\xi(x,t)$, we see that, for any bounded (measurable) non-negative initial condition
$\xi(x,0), ~ 0 \le x \le 1,$ the unique solution is as follows. Let us (uniquely) extend $\xi(x,0)$ to all $x \in (-\infty, 1]$ via
\eqn{eq-one-half}. By the properties of the mapping $M$, this extension is well-defined, 
the function $\\sup_{x \le 1/2} \xi(x,0) \le \sup_{0\le x\le 1/2} \xi(x,0)$
and, moreover, 
\beql{eq-conv-on-the-left}
\xi(x,0) \to \psi=-\log(1-\lambda), ~~ x \to -\infty.
\eeql
Define function 
$$
\tau(y) = \int_{1/2-y}^{1/2} \frac{1}{v(\xi(z,0))} dz, ~~ y \ge 0.
$$
This is the time it takes for the total displacement (to the right) to reach value $y$; in other words, this is
the time it takes for the mass initially located at point $1/2 -y$ to reach point $1/2$. 
By the boundedness of $\xi(\cdot,0)$, $v(\xi(x,0))$ is bounded away from $0$ (and it is always bounded above by $1$).
Therefore, $\tau(\cdot)$ is Lipschitz and, moreover, has the derivative bounded away from $0$; then so is 
its inverse function which we denote by $y(\tau), ~\tau \ge 0$. Finally, we formally define
$$
\xi(x,t) = \xi(x - y(t),0), ~~t\ge 0.
$$
To summarize, for any initial deterministic stage-profile $\phi(x,0), ~0\le x \le 1,$ with bounded density $\xi(x,0)$,
we formally (and rigorously) defined the unique stage-profile $\phi(x,t), ~0\le x \le 1,$ for any $t\ge 0$,
where $\phi(x,t) = \int_0^x \xi(z,t) dz$. The object $\phi(\cdot,\cdot)$ we will call the {\em hydrodynamic model} (of the reshuffling system) with initial state $\phi(\cdot,0)$. In the process of formally defining a hydrodynamic model, we have rigorously proved 
the following 
\begin{theorem}
\label{th-hydro}
Let $\lambda<1$ be fixed. 
Consider any hydrodynamic model $\phi(\cdot,\cdot)$ with the initial state $\phi(x,0), ~0\le x \le 1,$ having
 bounded density $\xi(x,0)$. Then, as $t\to\infty$, its density $\xi(\cdot,t)$ uniformly converges to the 
 constant density $\psi$.
\end{theorem}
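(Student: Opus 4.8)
The plan is to exploit the fact that the solution constructed just above is a pure rightward translation of a single fixed spatial profile; this reduces the temporal statement ($t\to\infty$) to the spatial boundary behavior \eqn{eq-conv-on-the-left} ($x\to-\infty$) already established in building the hydrodynamic model.

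First I would record that, by construction, $\xi(x,t)=\xi(x-y(t),0)$, where $\xi(\cdot,0)$ denotes the initial density extended to $(-\infty,1]$ through the relation \eqn{eq-one-half} and $y(\cdot)$ is the inverse of the displacement-time function $\tau(\cdot)$. Changing variables $z=x-y(t)$ then gives, for every $t\ge 0$,
$$\sup_{0\le x\le 1}\bigl|\xi(x,t)-\psi\bigr|=\sup_{z\in[-y(t),\,1-y(t)]}\bigl|\xi(z,0)-\psi\bigr|,$$
so it suffices to show that the window $[-y(t),1-y(t)]$ recedes to $-\infty$ and that $\xi(\cdot,0)$ is uniformly near $\psi$ on far-left intervals.

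Second, I would verify $y(t)\to\infty$. Since $v(u)=(1-e^{-u})/u\le 1$ for all $u\ge 0$, the integrand defining $\tau$ satisfies $1/v(\xi(z,0))\ge 1$, whence $\tau(y)\ge y$; and since $\xi(\cdot,0)$ is bounded by some $S<\infty$ and $v$ is decreasing, one also has $1/v(\xi(z,0))\le 1/v(S)$, so $\tau(y)\le y/v(S)$. Thus $\tau$ is an increasing bi-Lipschitz bijection of $[0,\infty)$ onto itself, its inverse $y(\cdot)$ is everywhere defined, and $y(t)\ge v(S)\,t\to\infty$.

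Finally, I would invoke \eqn{eq-conv-on-the-left}: for each $\epsilon>0$ there is $Z$ with $|\xi(z,0)-\psi|<\epsilon$ whenever $z<Z$; once $t$ is large enough that $y(t)>1-Z$, the entire interval $[-y(t),1-y(t)]$ lies in $\{z<Z\}$, so the supremum above is $<\epsilon$, giving the claimed uniform convergence. The only substantive ingredient is \eqn{eq-conv-on-the-left} itself, which rests on the iteration properties of $M$: every orbit $M^n(u_0)$ converges monotonically to the unique fixed point $\psi$, and order-preservation of $M$ yields the squeeze $M^n(0)\le M^n(u_0)\le M^n(S)$, forcing the convergence to be uniform over the bounded range $[0,S]$ of initial densities. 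I expect the one point needing care to be exactly this uniformity (as opposed to pointwise convergence along a single orbit); granting it, the passage from the spatial limit to the uniform temporal limit is immediate.
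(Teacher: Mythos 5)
Your proposal is correct and follows essentially the same route as the paper: there the theorem is proved in the course of constructing the hydrodynamic model, by extending $\xi(\cdot,0)$ to $(-\infty,1]$ via \eqn{eq-one-half}, establishing \eqn{eq-conv-on-the-left} from the monotone iteration properties of $M$, and observing that $\xi(x,t)=\xi(x-y(t),0)$ with $y(t)\to\infty$ (since $v$ is bounded above by $1$ and, by boundedness of the density, below away from $0$). Your explicit sandwich $M^n(0)\le M^n(u_0)\le M^n(S)$ to secure uniformity over the range of initial densities is a correct spelling-out of what the paper subsumes under ``the properties of the mapping $M$.''
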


\begin{rem}
It is not difficult to rigorously define a hydrodynamic model for an arbitrary initial state $\phi(\cdot,0)$ with finite total mass, $\phi(1,0)<\infty.$
So, $\phi(x,0)$ does not have to be absolutely continuous or even continuous.
We do not do this to this, because it adds technical details, without giving new intuition.
\end{rem}

Our conjecture about the dynamics of the stage-profile in the reshuffling system is as follows.
\begin{conj}
\label{conj-rs-dynamics}
Let $\lambda<1$ be fixed. 
If the initial stage-profile $\phi_{RS}^N(\cdot,0)$ in the reshuffling system converges to a deterministic 
stage-profile $\phi(\cdot,0)$ (with bounded density), then the process $\phi_{RS}^N(\cdot,\cdot)$ converges
(in appropriate sense) to the hydrodynamic model $\phi(\cdot,\cdot)$ with initial state $\phi(\cdot,0)$.
\end{conj}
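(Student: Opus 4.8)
The plan is to prove Conjecture~\ref{conj-rs-dynamics} as a hydrodynamic limit, using the martingale/empirical-measure method (as in the references \cite{rybko1992ergodicity, dai1995positive, stolyar1995stability} already invoked in Section~\ref{sec:oldest-first}), with the one genuinely new ingredient being the treatment of the state-dependent propagation speed. First I would encode the transformed system by the (rescaled) empirical measure $\pi^N_t = A_N^{-1}\sum_j \delta_{u_j(t)}$, the sum being over the packets present at time $t$ and $u_j(t)\in\{\gamma_0^N,\dots,\gamma_{N-1}^N\}$ the transformed location of packet $j$, so that $\phi^N_{RS}(x,t)=\pi^N_t([0,x])$. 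For $f\in C^1([0,1])$ with $f(1)=0$ (to absorb departures), write the Dynkin/martingale decomposition of $\langle\pi^N_t,f\rangle$. A direct generator computation, using that exogenous arrivals occur at rate $\lambda N$ and inject mass at $x=0$, that a stage-$i$ packet advances from $\gamma^N_{i-1}$ to $\gamma^N_i$ at original-time rate $i(N-i)\,\tilde v^N(t)$ with $\tilde v^N(t):=\E[1/(1+A)\mid \mathrm{state}]$ and $A$ the number of \emph{other} useful packets on the transmitting link, and the identity $\gamma^N_i-\gamma^N_{i-1}=1/(B_N\,i(N-i))$ from \eqn{eq-int-length}, yields after the $B_N$ time-rescaling the clean drift
\begin{equation*}
\frac{d}{dt}\langle\pi^N_t,f\rangle \;=\; \lambda f(0) + \tilde v^N(t)\,\langle\pi^N_t,f'\rangle + o(1).
\end{equation*}
The crucial cancellation is that \emph{every} particle moves at the common transformed speed $\tilde v^N(t)$, irrespective of its stage, because the advancement rate $i(N-i)$ is exactly reciprocal to the interval length; so the limit dynamics is rigid advection with a spatially uniform, time-fluctuating speed.

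The martingale part is negligible: its per-event jumps are $O(1/A_N)$ with $A_N=NB_N\sim 2\log N\to\infty$, and a routine computation shows its predictable quadratic variation tends to $0$. Combined with the boundedness of the total rescaled mass and the bounded drift above (which forces Lipschitz-in-$t$ control of $\phi^N_{RS}$), this gives tightness of $\{\phi^N_{RS}(\cdot,\cdot)\}$ in $D([0,T],\mathcal M)$, reducing the problem to identifying the subsequential limits.

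The heart of the matter, and the main obstacle, is to justify the closure $\tilde v^N(t)\approx v(\xi(1/2,t))$, where $\xi(\cdot,t)$ is the density of the limit profile and $v(u)=(1-e^{-u})/u$. A Poisson-limit argument, as in the motivation for Conjecture~\ref{conj:reshuf-Poisson}(i), gives $\tilde v^N(t)\approx v(H^N(t))$ with $H^N(t)=\sum_i n(i,t)\,i(N-i)/[N(N-1)]$; and the computation in Appendix~\ref{sec-profile-dynamics} shows that $H^N(t)$ equals the average $\int \hat\xi^N\,d\nu^N$ of the histogram density $\hat\xi^N$ against the uniform grid measure $\nu^N=\frac{1}{N-1}\sum_i\delta_{\gamma^N_{i-1}}$, where $\nu^N\Rightarrow\delta_{1/2}$ by Proposition~\ref{prop:pi-to-half-1}, so formally $H^N\to\xi(1/2,t)$. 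The difficulty is that this is \emph{not} a law of large numbers in the usual sense: the weight $i(N-i)$ concentrates on stages near $N/2$, i.e.\ near the point $1/2$, and by Little's law only $O(\log N)$ packets ever reside in a macroscopic neighborhood of stage $N/2$, so $H^N(t)$ carries order-one fluctuations at each fixed $t$ rather than self-averaging. I therefore expect the correct route to be a \emph{dynamic averaging / local-equilibrium} argument: on the fast time scale of individual communication epochs the macroscopic profile is essentially frozen while the middle-stage occupation numbers equilibrate to a conditional law determined by that frozen profile, and one replaces the time integral $\int_0^t \tilde v^N(s)\,ds$ (which is what actually governs the rigid advection, and which does average even though $\tilde v^N$ does not concentrate pointwise) by $\int_0^t \bar v(\phi(s))\,ds$, with $\bar v(\phi)=\E[v(H)\mid\phi]$. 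The final, delicate, self-consistency to verify is $\bar v(\phi)=v(\xi(1/2))$, which requires either showing that the true (correlated) occupation numbers make $H^N$ concentrate after all, or controlling the nonlinear gap between $\E[v(H)]$ and $v(\E[H])$. Establishing the requisite fast mixing of the mesoscopic middle-stage region and closing this gap is precisely the challenge flagged in Remark~\ref{poisson-conj-remark}, and is where I expect essentially all the work to lie.

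Granting the closure, the remainder is routine. Every subsequential limit of $\{\phi^N_{RS}\}$ satisfies the weak form of the transport equation $\partial_t\xi + v(\xi(1/2,t))\,\partial_x\xi = 0$ with boundary injection $\xi(0,t)=\lambda/v(\xi(1/2,t))=M(\xi(1/2,t))$, which is exactly the defining dynamics of the hydrodynamic model of Appendix~\ref{sec-profile-dynamics}. Since that construction produces, via the monotone mapping $M$ and the displacement function $\tau(\cdot)$, a unique solution $\phi(\cdot,\cdot)$ for each bounded initial density, all subsequential limits coincide with $\phi(\cdot,\cdot)$ and convergence of the full sequence follows, which is Conjecture~\ref{conj-rs-dynamics}. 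Combined with Theorem~\ref{th-hydro}, driving the density to the constant $\psi=-\log(1-\lambda)$, this would in turn yield the steady-state statement of Conjecture~\ref{conj-reshuf-location} and hence Conjecture~\ref{conj:reshuf-Poisson}(iv).
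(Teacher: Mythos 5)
You should first be aware that the statement you set out to prove is stated in the paper as a \emph{conjecture}, and the paper deliberately offers no proof of it: Appendix~\ref{sec-profile-dynamics} derives the hydrodynamic equations heuristically, rigorously constructs and solves the hydrodynamic model itself, and proves Theorem~\ref{th-hydro} \emph{about that deterministic model}, while Remark~\ref{poisson-conj-remark} explicitly flags the convergence of the stochastic system to it as an open problem. Your proposal faithfully reproduces the paper's heuristic skeleton in martingale clothing, and the rigorous parts of it are sound: the exact cancellation $B_N\, i(N-i)\,\tilde v^N(t)\cdot\bigl[B_N\, i(N-i)\bigr]^{-1}=\tilde v^N(t)$, giving a stage-independent transformed speed, is precisely the computation in Appendix~\ref{sec-profile-dynamics} showing each stage contributes equally; the boundary injection $\xi(0,t)=\lambda/v(\xi(1/2,t))$ is the paper's mapping $M$; the negligibility of the martingale part (jumps $O(1/A_N)$ with $A_N\sim 2\log N$) is routine; and uniqueness of the hydrodynamic solution, needed for your subsequential-limit endgame, is indeed established in the appendix. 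So, \emph{granting the closure}, your architecture would deliver the conjecture.

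But the closure is exactly the missing proof, and your own text concedes this (``where I expect essentially all the work to lie''), so what you have is a programme, not a proof. To make the gap concrete, two distinct unproven sub-problems hide inside ``dynamic averaging.'' (a) As you correctly observe, $H^N(t)$ is dominated by the $O(1)$ packets whose stages satisfy $i(N-i)\ge\delta N^2$; that set of stages occupies a transformed window of width $\Theta(1/\log N)$ around $1/2$. Hence identifying $\xi(1/2,t)$ from the pre-limit system requires controlling the empirical measure $\pi^N_t$ at the \emph{vanishing} spatial scale $1/\log N$, which is strictly finer information than the weak convergence your tightness argument provides; one would need a mesoscopic propagation-of-regularity or two-scale replacement lemma that nothing in your setup supplies. (b) Even granting fast decorrelation in time so that $\int_0^t\tilde v^N(s)\,ds$ averages, the conditional law of the competitor count $A$ given the state is Poisson-binomial with a few non-negligible success probabilities (those same $O(1)$ heavy packets), so the time-averaged speed is $\E_{\mathrm{loc.eq.}}\bigl[1/(1+A)\bigr]$, and equating this with $v(\xi(1/2,t))$ requires the local-equilibrium law of $A$ to be asymptotically $\mathrm{Poisson}(\xi(1/2,t))$ --- the dynamic analogue of Conjecture~\ref{conj:reshuf-Poisson}(i)--(ii), which is precisely the difficulty Remark~\ref{poisson-conj-remark} singles out (the usefulness probability $i(N-i)/[N(N-1)]$ is \emph{not} uniformly small, being $\approx 1/4$ for mid-stages). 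Since both (a) and (b) are left as expectations rather than lemmas, the argument does not establish the conjecture; as a roadmap, however, it is sound, correctly localizes the obstruction, and is fully consistent with the paper's own heuristic derivation.
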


This conjecture complements the steady-state Conjectures~\ref{conj:reshuf-Poisson}(iv) and \ref{conj-reshuf-location}.
In particular, Conjecture~\ref{conj-rs-dynamics} in conjunction with Theorem~\ref{th-hydro} implies
that as time $t\to\infty$ goes to infinity, the normalized sojourn time of a packet entering the system at time $t$
converges to $-\log(1-\lambda)/\lambda$.

\subsection{Example of the dynamics of the reshuffling system. Comparison to that under RU}

In this subsection we run a simulation experiment to test Conjecture~\ref{conj-rs-dynamics} about the dynamics of the reshuffling system. Specifically, we consider an extreme scenario when the entire particle mass (``impulse'')  is initially located at point $0$ and there are no new arrivals, i.e. $\lambda=0$. 

Consider a hydrodynamic model with initial state $\phi(\cdot,0)$, with density
\begin{align*}
	\xi(x, 0) = 
	\begin{cases}
 		u, & 0 \leq x \leq a \\
 		0, & a < x \le 1
 	\end{cases}
\end{align*}
where $a \in (0, 1)$. Then, $\phi(\cdot,0)$ is such that: the $a$-long interval, where the density is $u$, will move right at the constant speed $1$ until time $1/2-a$, when it ``hits'' the middle point $1/2$; then for the time $a/[(1-e^{-u})/u]=au/(1-e^{-u})$, while it overlaps with the middle point $1/2$, it will move at the constant speed $(1-e^{-u})/u$; and finally, for the time $1/2$,
it again will move at speed $1$ until the entire particle mass leaves the system. Now, if we keep the total initial mass $c=au$
constant, and let $u\to\infty$, in the limit we obtain the following hydrodynamic model: it has the particle mass 
(``impulse'') $c$ initially concentrated at $0$; this mass first moves at speed $1$ until it reaches the middle point $1/2$; then the mass stays at $1/2$ for the time $c$; then it resumes the movement at speed $1$ until reaching end point $1$. 
In particular, sojourn time of the particle mass in the transformed system is $c+1$, which is the approximation of the normalized delay of the packets when $N$ is large.

Figure~\ref{fig:impulse-scale-5} 
show simulation results for a system with finite $N=10000$,
initialized with $c A_N$ stage $1$ packets, and with no new exogenous packet arrivals. Therefore, $c$ is the ``impulse size'' of the corresponding hydrodynamic model. We consider $c=5$ and $c=10$. The figure shows the average time for a packet to reach point $x$ in the transformed system. We see that each plot does resemble that for 
the corresponding hydrodynamic model, albeit the curve is smooth since we simulate a finite system; a packet first moves fast (at speed close to $1$), then slows down in the middle of the transformed interval, then accelerates again (to speed close to $1$).
The slowdown is close to the limiting slowdown of $c+1$.

Figure~\ref{fig:impulse-scale-5}  
also shows plots for the corresponding systems under RU discipline. In this case the system is initialized so that
 $c A_N$ stage $1$ packets are placed independently at nodes chosen uniformly at random.
We compare RS and RU disciplines, because, in a sense, RS is a ``simplified version'' of RU, 
both choosing a random useful packet to transmit on a link. We observe the the delay profile under RU is quite similar
to that of RS. In fact, the plots practically coincide up to some point $x^*$ {\em which is greater than $1/2$.} 
Recall, that majority of stages $i$ in the original system are located close to point $1/2$
in the transformed system. We conclude that, for the RU and RS systems with impulse initial state, the delays 
to reach the majority of stages $i$, except a small fraction of stages ``at the end,'' are practically same.
For the small fraction of stages at the end, however, the delays under RU are larger. 
Informally speaking, this is due the fact that the conjectures that we made about the behavior of an RS system
(which are very well confirmed by simulations), do {\em not} appear to hold under RU.

\begin{figure}[htp!]
	\centering
		\begin{subfigure}[t]{0.45\textwidth}
        \centering
        \includegraphics[scale=0.50]{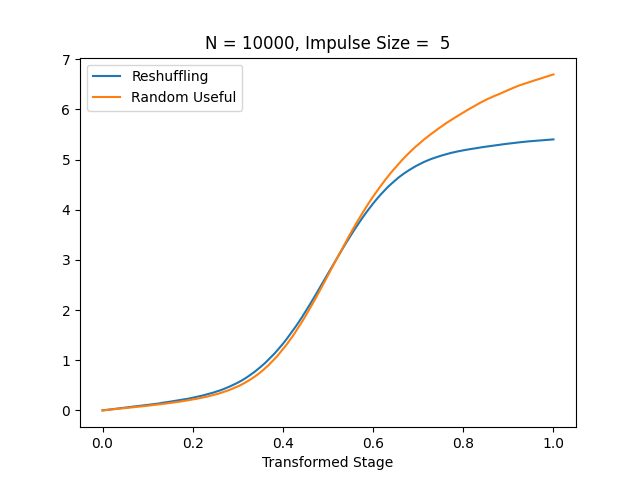}
    \end{subfigure}%
    ~ 
    \begin{subfigure}[t]{0.48\textwidth}
        \centering
        \includegraphics[scale=0.50]{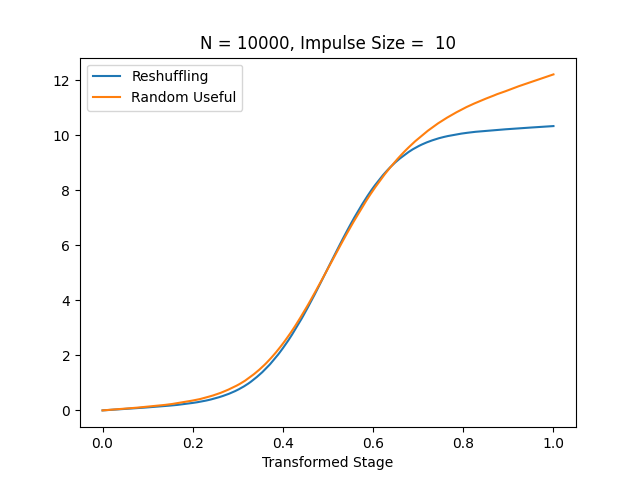}
    \end{subfigure}
    \caption{Mean time in transformed system to reach point $x$. Impulse sizes $5$ and $10$.}
    \label{fig:impulse-scale-5}	
\end{figure}

\newpage

\section{Additional Simulation Data for Sections~\ref{sec:comparisons}, \ref{sec:delays} and \ref{sec:reshuffling}}
\label{sec-supporting-simulation}

\begin{table}
\centering
\begin{tabular}[]{| c ||c | c | c |}
	\hline
	$N = 50$, $\lambda = 0.3\ $ & Distinct Packets & Undelivered Packet Copies & Age of Information \\
	\hline
	OU & $3.538$ & $96.350$ & $0.175$ \\
	RU & $3.604$ & $84.294$ & $0.179$ \\
	Selfish & $3.394$ & $85.462$ & $0.163$ \\
	\hline
\end{tabular}
\begin{tabular}[]{| c ||c | c | c |}
	\hline
	$N = 100$, $\lambda = 0.3$ & Distinct Packets & Undelivered Packet Copies & Age of Information \\
	\hline
	OU & $4.078$ & $225.127$ & $0.105$ \\
	RU & $4.185$ & $193.384$ & $0.107$ \\
	Selfish & $3.861$ & $198.076$ & $0.097$ \\
	\hline
\end{tabular}
\begin{tabular}[]{| c ||c | c | c |}
	\hline
	$N = 150$, $\lambda = 0.3$ & Distinct Packets & Undelivered Packet Copies & Age of Information \\
	\hline
	OU & $4.402$ & $364.837$ & $0.077$ \\
	RU & $4.517$ & $310.902$ & $0.079$ \\
	Selfish & $4.170$ & $324.388$ & $0.071$ \\
	\hline
\end{tabular}
\begin{tabular}[]{| c ||c | c | c |}
	\hline
	$N = 200$, $\lambda = 0.3$ & Distinct Packets & Undelivered Packet Copies & Age of Information \\
	\hline
	OU & $4.628$ & $512.799$ & $0.061$ \\
	RU & $4.791$ & $437.627$ & $0.063$ \\
	Selfish & $4.366$ & $455.245$ & $0.057$ \\
	\hline
\end{tabular}
\begin{tabular}[]{| c ||c | c | c |}
	\hline
	$N = 50$, $\lambda = 0.5\ $ & Distinct Packets & Undelivered Packet Copies & Age of Information \\
	\hline
	OU & $7.897$ & $231.623$ & $0.282$ \\
	RU & $7.956$ & $174.702$ & $0.307$ \\
	Selfish & $7.131$ & $177.919$ & $0.248$ \\
	\hline
\end{tabular}
\begin{tabular}[]{| c ||c | c | c |}
	\hline
	$N = 100$, $\lambda = 0.5$ & Distinct Packets & Undelivered Packet Copies & Age of Information \\
	\hline
	OU & $9.146$ & $543.397$ & $0.165$ \\
	RU & $9.261$ & $394.447$ & $0.180$ \\
	Selfish & $8.177$ & $422.581$ & $0.144$ \\
	\hline
\end{tabular}
\begin{tabular}[]{| c ||c | c | c |}
	\hline
	$N = 150$, $\lambda = 0.5$ & Distinct Packets & Undelivered Packet Copies & Age of Information \\
	\hline
	OU & $9.867$ & $884.421$ & $0.120$ \\
	RU & $10.096$ & $641.748$ & $0.131$ \\
	Selfish & $8.801$ & $696.006$ & $0.104$ \\
	\hline
\end{tabular}
\begin{tabular}[]{| c ||c | c | c |}
	\hline
	$N = 200$, $\lambda = 0.5$ & Distinct Packets & Undelivered Packet Copies & Age of Information \\
	\hline
	OU & $10.393$ & $1245.166$ & $0.095$ \\
	RU & $10.693$ & $897.505$ & $0.104$ \\
	Selfish & $9.235$ & $986.223$ & $0.082$ \\
	\hline
\end{tabular}
\begin{tabular}[]{| c ||c | c | c |}
	\hline
	$N = 50$, $\lambda = 0.7\ $ & Distinct Packets & Undelivered Packet Copies & Age of Information \\
	\hline
	OU & $18.318$ & $583.120$ & $0.501$ \\
	RU & $17.242$ & $345.818$ & $0.599$ \\
	Selfish & $14.921$ & $353.674$ & $0.406$ \\
	\hline
\end{tabular}
\begin{tabular}[]{| c ||c | c | c |}
	\hline
	$N = 100$, $\lambda = 0.7$ & Distinct Packets & Undelivered Packet Copies & Age of Information \\
	\hline
	OU & $21.602$ & $1407.346$ & $0.297$ \\
	RU & $20.235$ & $780.674$ & $0.348$ \\
	Selfish & $16.816$ & $843.322$ & $0.228$ \\
	\hline
\end{tabular}
\begin{tabular}[]{| c ||c | c | c |}
	\hline
	$N = 150$, $\lambda = 0.7$ & Distinct Packets & Undelivered Packet Copies & Age of Information \\
	\hline
	OU & $23.691$ & $2331.977$ & $0.217$ \\
	RU & $22.039$ & $1249.37$ & $0.251$ \\
	Selfish & $18.123$ & $1406.262$ & $0.164$ \\
	\hline
\end{tabular}
\begin{tabular}[]{| c ||c | c | c |}
	\hline
	$N = 200$, $\lambda = 0.7$ & Distinct Packets & Undelivered Packet Copies & Age of Information \\
	\hline
	OU & $24.987$ & $3337.345$ & $0.172$ \\
	RU & $23.470$ & $1745.280$ & $0.200$ \\
	Selfish & $19.249$ & $2029.730$ & $0.130$ \\
	\hline
\end{tabular}

\caption{Simulation results for average values of performance metrics, under different disciplines, in a symmetric system.}
	\label{tab:sim-data} 
\end{table}

\begin{table}
\centering
\begin{tabular}[]{| c ||c | c |}
	\hline
	OU, $\lambda = 0.3$ & Mean Sojourn Time & Variance \\
	\hline
	$N = 50$ & $1.315$ & $0.100$ \\
	$N = 100$ & $1.315$ & $0.079$ \\
	$N = 150$ & $1.314$ & $0.071$ \\
	$N = 200$ & $1.312$ & $0.066$ \\
	\hline
\end{tabular}

\begin{tabular}[]{| c ||c | c |}
	\hline
	OU, $\lambda = 0.5$ & Mean Sojourn Time & Variance \\
	\hline
	$N = 50$ & $1.764$ & $0.210$ \\
	$N = 100$ & $1.767$ & $0.176$ \\
	$N = 150$ & $1.767$ & $0.160$ \\
	$N = 200$ & $1.768$ & $0.151$ \\
	\hline
\end{tabular}

\begin{tabular}[]{| c ||c | c |}
	\hline
	OU, $\lambda = 0.7$ & Mean Sojourn Time & Variance \\
	\hline
	$N = 50$ & $2.908$ & $0.666$ \\
	$N = 100$ & $2.989$ & $0.608$ \\
	$N = 150$ & $3.026$ & $0.583$ \\
	$N = 200$ & $3.060$ & $0.573$ \\
	\hline
\end{tabular}

\begin{tabular}[]{| c ||c | c |}
	\hline
	RU, $\lambda = 0.3$ & Mean Sojourn Time & Variance \\
	\hline
	$N = 50$ & $1.339$ & $0.145$ \\
	$N = 100$ & $1.347$ & $0.118$ \\
	$N = 150$ & $1.353$ & $0.106$ \\
	$N = 200$ & $1.356$ & $0.100$ \\
	\hline
\end{tabular}

\begin{tabular}[]{| c ||c | c |}
	\hline
	RU, $\lambda = 0.5$ & Mean Sojourn Time & Variance \\
	\hline
	$N = 50$ & $1.774$ & $0.375$ \\
	$N = 100$ & $1.793$ & $0.317$ \\
	$N = 150$ & $1.809$ & $0.298$ \\
	$N = 200$ & $1.820$ & $0.281$ \\
	\hline
\end{tabular}

\begin{tabular}[]{| c ||c | c |}
	\hline
	RU, $\lambda = 0.7$ & Mean Sojourn Time & Variance \\
	\hline
	$N = 50$ & $2.753$ & $1.192$ \\
	$N = 100$ & $2.794$ & $1.065$ \\
	$N = 150$ & $2.839$ & $1.024$ \\
	$N = 200$ & $2.859$ & $0.992$ \\
	\hline
\end{tabular}
\caption{Simulation results for mean and variance of normalized sojourn time, for OU and RU disciplines, in a symmetric system.}
	\label{tab:slowdown}
\end{table}

 \begin{figure}[htp!]
	\centering
	\includegraphics[scale=0.5]{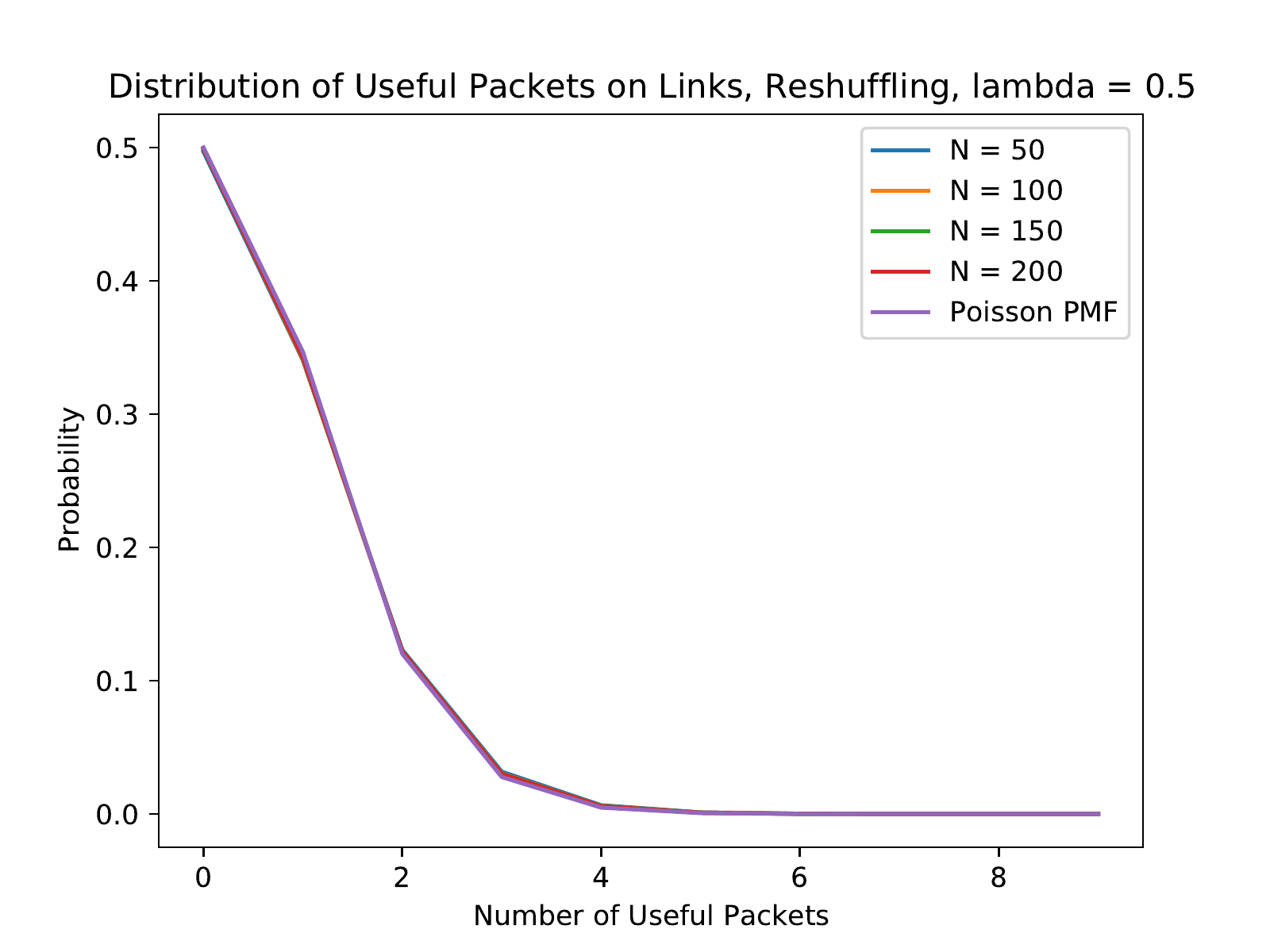}
	\caption{The simulated distribution of the number of useful packets on a fixed link of reshuffling system, with $\lambda = 0.5$.
	The distribution appears to converge to Poisson.}
	\label{fig:useful-packets-distn}
\end{figure}

\end{document}